%%%%%%%%%%%%%%%%%%%%%%% file template.tex %%%%%%%%%%%%%%%%%%%%%%%%%
%
% This is a general template file for the LaTeX package SVJour3
% for Springer journals.          Springer Heidelberg 2010/09/16
%
% Copy it to a new file with a new name and use it as the basis
% for your article. Delete % signs as needed.
%
% This template includes a few options for different layouts and
% content for various journals. Please consult a previous issue of
% your journal as needed.
%
%%%%%%%%%%%%%%%%%%%%%%%%%%%%%%%%%%%%%%%%%%%%%%%%%%%%%%%%%%%%%%%%%%%
%
% First comes an example EPS file -- just ignore it and
% proceed on the \documentclass line
% your LaTeX will extract the file if required
% [arxiv_v2: filecontents example.eps stripped, 193 chars]
\RequirePackage{fix-cm}
\documentclass[smallextended]{svjour3}       % onecolumn (second format)
\smartqed  % flush right qed marks, e.g. at end of proof
\usepackage{graphicx}
\usepackage{enumerate}
\usepackage{color}
\usepackage{amsfonts,amssymb} %%%add
\usepackage{amsmath} %%%add
%
% \usepackage{mathptmx}      % use Times fonts if available on your TeX system
%
% insert here the call for the packages your document requires
%\usepackage{latexsym}
% etc.
%
% please place your own definitions here and don't use \def but
% \newcommand{}{}
%
% Insert the name of "your journal" with
% \journalname{myjournal}
%
\begin{document}

\title{Sharp error estimates for spatial-temporal finite difference approximations to fractional sub-diffusion equation without regularity assumption on the exact solution
	\thanks{This work was supported by the National Natural Science Foundation of China under Grant Nos. 12071195 and 12201270, the Innovative Groups of Basic Research in Gansu Province under Grant No.22JR5RA391, the science and technology plan of Gansu Province under Grant No. 22JR5RA535, the Fundamental Research Funds for the Central Universities under Grant No. lzujbky-2022-pd04, and China Postdoctoral
		Science Foundation: 2022M721439.
	}%\thanks{Grants or other notes
%about the article that should go on the front page should be
%placed here. General acknowledgments should be placed at the end of the article.}
}
%\subtitle{Do you have a subtitle?\\ If so, write it here}

\titlerunning{Error Estimates of Finite Difference  Approximations}        % if too long for running head

\author{Daxin Nie         \and
		Jing Sun\and
        Weihua Deng %etc.
}

%\authorrunning{Short form of author list} % if too long for running head

\institute{Daxin Nie \at
              School of Mathematics and Statistics, Gansu Key Laboratory of Applied Mathematics and Complex Systems, Lanzhou University, Lanzhou 730000, P.R. China \\
              \email{ndx1993@163.com}           %  \\
%             \emph{Present address:} of F. Author  %  if needed
           \and
           Jing Sun \at
           School of Mathematics and Statistics, Gansu Key Laboratory of Applied Mathematics and Complex Systems, Lanzhou University, Lanzhou 730000, P.R. China \\
           \email{js@lzu.edu.cn}           %  \\
           %             \emph{Present address:} of F. Author  %  if needed
           \and
           Weihua Deng \at
              School of Mathematics and Statistics, Gansu Key Laboratory of Applied Mathematics and Complex Systems, Lanzhou University, Lanzhou 730000, P.R. China\\
              \email{dengwh@lzu.edu.cn}
}

\date{Received: date / Accepted: date}
% The correct dates will be entered by the editor

\maketitle

\begin{abstract}
Finite difference method as a popular numerical method has been widely used to solve fractional diffusion equations. In the general spatial error analyses, an assumption $u\in C^{4}(\bar{\Omega})$ is needed to preserve $\mathcal{O}(h^{2})$ convergence when using  central finite difference scheme to solve fractional sub-diffusion equation with Laplace operator, but this assumption is somewhat strong, where $u$ is the exact solution and $h$ is the mesh size. In this paper, a novel analysis technique is proposed to show that the spatial convergence rate can reach $\mathcal{O}(h^{\min(\sigma+\frac{1}{2}-\epsilon,2)})$ in both $l^{2}$-norm and $l^{\infty}$-norm in one-dimensional domain when the initial value and source term are both in $\hat{H}^{\sigma}(\Omega)$ but without any regularity assumption on the exact solution, where  $\sigma\geq 0$ and $\epsilon>0$ being arbitrarily small. After making slight modifications on the scheme, acting on the initial value and source term, the spatial convergence rate can be improved to $\mathcal{O}(h^{2})$ in $l^{2}$-norm and  $\mathcal{O}(h^{\min(\sigma+\frac{3}{2}-\epsilon,2)})$ in $l^{\infty}$-norm. It's worth mentioning that our spatial error analysis is applicable to high dimensional cube domain by using the properties of tensor product.  Moreover, two kinds of averaged schemes are provided to approximate the Riemann--Liouville fractional derivative, and $\mathcal{O}(\tau^{2})$ convergence is obtained for all $\alpha\in(0,1)$. Finally, some numerical experiments verify the effectiveness of the built theory.
\keywords{Fractional sub-diffusion equation\and Laplace operator\and central finite difference method\and modified scheme\and  averaged $L1$ scheme\and averaged second order backward difference scheme\and error analysis}
% \PACS{PACS code1 \and PACS code2 \and more}
% \subclass{MSC code1 \and MSC code2 \and more}
\end{abstract}

\section{Introduction}

In this paper, we mainly propose sharp error estimates for the finite difference scheme of the fractional sub-diffusion equation without regularity assumption on the exact solution, i.e.,
\begin{equation}\label{eqretosol}
	\left\{
	\begin{aligned}
		&{}_{0}\partial^{\alpha}_{t}(u-u_{0})+Au=f\qquad (x,t)\in \Omega\times(0,T],\\
		&u=0\qquad (x,t)\in \partial\Omega\times(0,T],\\
		&u(0)=u_{0}\qquad x\in \Omega,
	\end{aligned}
	\right.
\end{equation}
where $\Omega=(0,l)^{d}\in \mathbb{R}^{d}$, and without loss of generality, we take $d=l=1$; $A=(-\Delta)$ with homogeneous Dirichlet boundary condition, whose eigenvalues and eigenfunctions are $\{\lambda_{k},\phi_{k}\}_{k=1}^{\infty}=\{k^{2}\pi^{2},\sqrt{2}\sin(k\pi x)\}_{k=1}^{\infty}$, and the eigenfunctions are orthogonal in $L^{2}(\Omega)$; $f$ is the source term; $u_{0}$ is the initial value; ${}_{0}\partial^{\alpha}_{t}$ is the Riemann--Liouville fractional derivative \cite{Podlubny.1999FDE}, the definition of which is
\begin{equation*}
	{}_{0}\partial^{\alpha}_{t}u=\frac{1}{\Gamma(1-\alpha)}\frac{\partial}{\partial t}\int_{0}^{t}(t-\xi)^{-\alpha}u(\xi)d\xi.
\end{equation*}

Fractional sub-diffusion equation \eqref{eqretosol} governs the evolution of the probability density function of the sub-diffusion \cite{Barkai.2001FFesaa,Metzler.1999AdarctteAfFea} and there are many applications in physics, biology, and so on  \cite{Glockle.1995Afcatspd,Zaslavsky.1997SrapsnoHcd}. So far, there have been many works on numerically solving fractional sub-diffusion equations \cite{Fu.2021EssfdAmfmvtde,Gao.2015Sacofdsfacotsebocs,Jin.2015AaotLsftsewnd,Jin.2017CohBcqffee,Lin.2007Fdafttde,Mustapha.2020Aafafreaseaotm,Stynes.2017Eaoafdmogmfatde,Wang.2020THTDSfSPwND,Yan.2018AaotmLsftpdewnd,Zeng.2015SSFDSftTDE,Zheng.2022AaLcdmftmdewwss}. Among them, the central finite difference scheme is a popular approximation to Laplace operator in  Eq. \eqref{eqretosol} and it can theoretically achieve $\mathcal{O}(h^{2})$ under the regularity assumption of the exact solution $u\in C^{4}(\bar{\Omega})$, which implies the initial value and source term must be smooth enough, such as, belong to $\hat{H}^{5}(\Omega)$ (the corresponding regularity analyses can refer to \cite{Sakamoto.2011Ivvpffdeaatsip,Stynes.2017Eaoafdmogmfatde}). However, are these assumptions really necessary?

In recent years,  a variety of robust numerical schemes for solving the fractional partial differential equations with non-smooth data and the corresponding error analyses  have been proposed \cite{Acosta.2019Feaffep,Deng.2018TDoaTFFEwMD,Jin.2016Tfdsffdadewnd,Jin.2019SwatcAans,Li.2022Ecqfnsewnid,Luo.2019CAoaPMfFWPwND}. To our best knowledge, it seems that the relative discussions on central finite difference scheme are rare. The main reason is that the existing error estimate for finite difference scheme is constructed by the truncation errors, which can't be obtained without the regularity assumption of the exact solution.

To fill the gap, a new error analysis without the regularity assumption on exact solution is proposed in this paper. To be specific, motivated by the idea in \cite{Gyongy.1998LafsqppdedbswnI,Gyongy.1999LafsqppdedbswnI}, we first give a representation of the solution of central finite difference scheme to Eq. \eqref{eqretosol}. Then by using approximation theory and operator theory, an $\mathcal{O}(h^{\min(\sigma+\frac{1}{2}-\epsilon,2)})$ spatial convergence rate in both $l^{2}$-norm and $l^{\infty}$-norm can be obtained if $u_{0},f(0)\in \hat{H}^{\sigma}(\Omega)$ and $\int_{0}^{t}\|f_{t}(s)\|_{\hat{H}^{\sigma}(\Omega)}ds<\infty$ (for the details, see Theorems \ref{thmerrorhom} and \ref{thmerrorimhom}). Moreover, modifying the scheme, i.e., slightly changing the initial value and source term makes spatial convergence rates  be improved to $\mathcal{O}(h^{2})$ in $l^{2}$-norm and $\mathcal{O}(h^{\min(\sigma+\frac{3}{2}-\epsilon,2)})$ in  $l^{\infty}$-norm. And the spatial error analyses for high dimensional cube domain can be similarly obtained by using the properties of tensor product, which can refer to Remark \ref{Rehighdimen}. On the other hand,  two kinds of averaged schemes, i.e., averaged $L1$ and averaged second order backward difference ($SBD$) schemes, are used to discretize the Riemann-Liouville fractional derivative, and an $\mathcal{O}(\tau^{2})$ convergence is obtained. Different from the convergence analysis of the previous averaged $L1$ scheme provided in \cite{Zheng.2022AaLcdmftmdewwss,Zhou.2022OCRiTDT$L$LaAS}, our temporal error estimate holds for all $\alpha\in(0,1)$.

The paper is organized as follows. We propose central finite difference scheme and the modified central difference scheme to approximate the Laplace operator, and the corresponding error analyses without regularity assumption  of the exact solution are provided in Section \ref{sec2}. In Section \ref{sec3}, two averaged schemes are constructed in time and we show that their convergence rates are both $\mathcal{O}(\tau^{2})$ for all $\alpha\in(0,1)$. Various numerical experiments are presented in Section \ref{sec4} to validate our theory. At last, we conclude the paper with some discussions.
In the following, $C$ is a positive constant, whose value may differ at each occurrence, $\epsilon>0$ is arbitrarily small and `$\tilde{~}$' stands for taking Laplace transform.
\section{Space semi-discrete scheme and error analysis}\label{sec2}
In this section, we approximate the Laplace operator in Eq. \eqref{eqretosol} by central finite difference method and provide the corresponding error analysis. Compared with the traditional error analysis, our built spatial error analysis only depends on  the regularity of the initial value and the source term, instead of the regularity of exact solution. Afterwards, we do some modifications on the scheme, which makes the spatial convergence rate be improved to $\mathcal{O}(h^{2})$ in $l^{2}$-norm and  $\mathcal{O}(h^{\min(\sigma+\frac{3}{2}-\epsilon,2)})$ in $l^{\infty}$-norm.

\subsection{Central finite difference scheme and error analysis}
%Let's begin with introducing some notations.
Define the fractional Sobolev space $\hat{H}^{s}(\Omega)$ with $s\geq 0$ by
\begin{equation*}
	\hat{H}^{s}(\Omega)=\{v\in L^{2}(\Omega), \|v\|_{\hat{H}^{s}(\Omega)}<\infty\},
\end{equation*}
whose norm can be defined by
\begin{equation*}
	\|v\|_{\hat{H}^{s}(\Omega)}^{2}=\sum_{k=1}^{\infty}\lambda_{k}^{s}(v,\phi_{k})^{2}.
\end{equation*}
Here $(\cdot,\cdot)$ denotes the $L^{2}$ inner product, and $\phi_{k}$ is the eigenfunction of the Laplace operator with homogeneous boundary condition on $\Omega$. Introduce the discrete $l^{2}$-norm and $l^{\infty}$-norm as
\begin{equation*}
	\|\mathbf{v}\|_{l^{2}}^{2}=\frac{1}{M}\sum_{k=1}^{M}v_{k}^{2},\quad \|\mathbf{v}\|_{l^{\infty}}=\max_{1\leq k\leq M}|v_{k}|,\quad{\rm for}~\mathbf{v}=[v_{1},v_{2},\ldots,v_{M}]^{T}\in \mathbb{R}^{M}.
\end{equation*}
Let the mesh size $h=1/N$, $N\in\mathbb{N}^{*}$, and $x_{i}=i/N$, $i=0,\ldots,N$. Define the operator $\mathcal{I}_{N}$ as
\begin{equation}\label{eqdefI}
	\mathcal{I}_{N}v=[v(x_{1}),v(x_{2}),\ldots,v(x_{N-1})]^{T}
\end{equation}
for a function $v$. Denote
\begin{equation}\label{eqdefbu}
	\begin{aligned}
		\mathbf{u}=&[u_{1},u_{2},\ldots,u_{N-1}]^{T}=\mathcal{I}_{N}u,\\
		\mathbf{u}^{0}=&[u^{0}_{1},u^{0}_{2},\ldots,u^{0}_{N-1}]^{T}=\mathcal{I}_{N}u_{0},\\
		\mathbf{f}=&[f_{1},f_{2},\ldots,f_{N-1}]^{T}=\mathcal{I}_{N}f.
	\end{aligned}
\end{equation}

Then, using the central finite difference method to discretize minus Laplace operator $-\Delta$ leads to the spatial semi-discrete scheme of Eq.  \eqref{eqretosol} as follows:
\begin{equation}\label{eqsemischl1}
	\begin{aligned}
		&{}_{0}\partial^{\alpha}_{t}(u_{h,i}-u^{0}_{i})+N^{2}\sum_{j=1}^{N-1}w_{i,j}u_{h,j}=f_{i}
	\end{aligned}
\end{equation}
with
\begin{equation*}
	w_{i,j}=\left\{
	\begin{array}{cl}
		2 & i=j,\\
		-1 & |i-j|=1,\\
		0 & |i-j|>1,
	\end{array}
	\right.
\end{equation*}
for $i,j=1,2,\ldots, N-1$. Here $u_{h,i}$ is the numerical solution of $u$ at $x_{i}$.
Let $\mathbf{u}_{h}=[u_{h,1},u_{h,2},\ldots,u_{h,N-1}]^{T}$ and the matrix $\mathbf{A}=[N^{2}w_{i,j}]_{i,j=1}^{N-1}$. Then Eq. \eqref{eqsemischl1}  becomes
\begin{equation}\label{eqsemischl1M}
	\begin{aligned}
		&{}_{0}\partial^{\alpha}_{t}(\mathbf{u}_{h}-\mathbf{u}^{0})+\mathbf{A}\mathbf{u}_{h}=\mathbf{f}.
	\end{aligned}
\end{equation}
Taking the Laplace transform and inverse Laplace transform for Eq. \eqref{eqsemischl1M},  the solution can be written as
\begin{equation}\label{eqsemischl1Msol0}
	\begin{aligned}
		\mathbf{u}_{h}(t)=\frac{1}{2\pi\mathbf{i}}\int_{\Gamma_{\theta}}e^{zt}z^{\alpha-1}(z^{\alpha}+\mathbf{A})^{-1}\mathbf{u}^{0}dz+\frac{1}{2\pi\mathbf{i}}\int_{\Gamma_{\theta}}e^{zt}(z^{\alpha}+\mathbf{A})^{-1}\tilde{\mathbf{f}}dz,
	\end{aligned}
\end{equation}
where $\Gamma_{\theta}=\{z\in\mathbb{C},  |\arg(z)|=\theta,|z|>0,\,\theta\in (\pi/2,\pi)\}$, $\arg(z)$ means the argument of $z$, and $\mathbf{i}$ denotes  the imaginary unit.

On the other hand, following \cite{Gyongy.1998LafsqppdedbswnI,Gyongy.1999LafsqppdedbswnI}, the eigenvectors and eigenvalues of matrix $\mathbf{A}$ are
\begin{equation*}
	\{\boldsymbol{\varphi}_{h,j}\}_{j=1}^{N-1}=\left \{\left[\sqrt{\frac{2}{N}}\sin\left (j\frac{k}{N}\pi\right )\right]_{k=1}^{N-1}\right \}_{j=1}^{N-1}
\end{equation*}
and
\begin{equation*}
	\{\lambda_{h,j}\}_{j=1}^{N-1}=\left\{4N^{2}\sin^{2}\left (\frac{j}{2N}\pi\right )\right\}_{j=1}^{N-1}=\{j^{2}\pi^{2}c^{N}_{j}\}_{j=1}^{N-1}
\end{equation*}
with
\begin{equation*}
	c^{N}_{j}=\frac{\left(\sin\left(\frac{j\pi}{2N}\right)\right)^{2}}{\left(\frac{j\pi}{2N}\right)^{2}}.
\end{equation*}
Thus $\mathbf{u}_{h}(t)$ can be represented by the eigenvectors of matrix $\mathbf{A}$, i.e.,
\begin{equation}\label{eqsemischl1Msol1}
	\begin{aligned}
		\mathbf{u}_{h}(t)=&\frac{1}{2\pi\mathbf{i}}\int_{\Gamma_{\theta}}e^{zt}z^{\alpha-1}(z^{\alpha}+\mathbf{A})^{-1}\sum_{i=1}^{N-1}(\mathbf{u}^{0})^{T}\boldsymbol{\varphi}_{h,i}\boldsymbol{\varphi}_{h,i}dz\\
		&+\frac{1}{2\pi\mathbf{i}}\int_{\Gamma_{\theta}}e^{zt}(z^{\alpha}+\mathbf{A})^{-1}\sum_{i=1}^{N-1}(\tilde{\mathbf{f}})^{T}\boldsymbol{\varphi}_{h,i}\boldsymbol{\varphi}_{h,i}dz.
	\end{aligned}
\end{equation}
Further, define $u_{h,0}(t)=u_{h,N}(t)=0$, introduce $u_{h}(x,t)$ and $\phi_{h,j}(x)$ as
\begin{equation}\label{eqdefuh}
	\begin{aligned}
		&u_{h}(x,t)=u_{h,i}+(Nx-i)(u_{h,i+1}-u_{h,i})\qquad {\rm} x\in[x_{i},x_{i+1}),~i=0,1,\ldots,N-1,\\
		&\phi_{h,j}(x)=\phi_{j}(x_{i})+(Nx-i)(\phi_{j}(x_{i+1})-\phi_{j}(x_{i}))\qquad {\rm} x\in[x_{i},x_{i+1}),~i=0,1,\ldots,N-1,
	\end{aligned}
\end{equation}
and denote $\tilde{E}_{h}(z,x,y)$ as
\begin{equation}\label{eqdefEh}
	\begin{aligned}
		&\tilde{E}_{h}(z,x,y)=\sum_{i=1}^{N-1}(z^{\alpha}+\lambda_{h,i})^{-1}\phi_{h,i}(x)\phi_{i}(\eta_{h}(y)),
	\end{aligned}
\end{equation}
where $\eta_{h}(y)=[Ny]/N$, and $[Ny]$ means the integer part of $Ny$. Thus according to \eqref{eqsemischl1Msol1}, we can obtain
\begin{equation}\label{eqsemischl1Msol}
	\begin{aligned}
		u_{h}(x,t)=&\frac{1}{2\pi\mathbf{i}}\int_{\Gamma_{\theta}}e^{zt}z^{\alpha-1}\int_{0}^{1}\tilde{E}_{h}(z,x,y)u_{0}(\eta_{h}(y))dydz\\
		&+\frac{1}{2\pi\mathbf{i}}\int_{\Gamma_{\theta}}e^{zt}\int_{0}^{1}\tilde{E}_{h}(z,x,y)\tilde{f}(\eta_{h}(y))dydz\\
		=&\frac{1}{2\pi\mathbf{i}}\int_{\Gamma_{\theta}}e^{zt}z^{\alpha-1}\int_{0}^{1}\tilde{E}_{h}(z,x,y)u_{0}(\eta_{h}(y))dydz\\
		&+\frac{1}{2\pi\mathbf{i}}\int_{\Gamma_{\theta}}e^{zt}z^{-1}\int_{0}^{1}\tilde{E}_{h}(z,x,y)\tilde{f}_{t}(\eta_{h}(y))dydz\\
		&+\frac{1}{2\pi\mathbf{i}}\int_{\Gamma_{\theta}}e^{zt}z^{-1}\int_{0}^{1}\tilde{E}_{h}(z,x,y)f(\eta_{h}(y),0)dydz,
	\end{aligned}
\end{equation}
where we use $f(t)=f(0)+\int_{0}^{t}f_{t}(s)ds$ and $f_{t}$ means the first derivative of $f$ about t.

In a similar way, the solution of Eq. \eqref{eqretosol} can be represented as
\begin{equation}\label{eqsolofrel}
	\begin{aligned}
		u(x,t)=&\frac{1}{2\pi\mathbf{i}}\int_{\Gamma_{\theta}}e^{zt}z^{\alpha-1}\int_{0}^{1}\tilde{E}(z,x,y)u_{0}(y)dydz\\
		&+\frac{1}{2\pi\mathbf{i}}\int_{\Gamma_{\theta}}e^{zt}\int_{0}^{1}\tilde{E}(z,x,y)\tilde{f}(y)dydz\\
		=&\frac{1}{2\pi\mathbf{i}}\int_{\Gamma_{\theta}}e^{zt}z^{\alpha-1}\int_{0}^{1}\tilde{E}(z,x,y)u_{0}(y)dydz\\
		&+\frac{1}{2\pi\mathbf{i}}\int_{\Gamma_{\theta}}e^{zt}z^{-1}\int_{0}^{1}\tilde{E}(z,x,y)\tilde{f}_{t}(y)dydz\\
		&+\frac{1}{2\pi\mathbf{i}}\int_{\Gamma_{\theta}}e^{zt}z^{-1}\int_{0}^{1}\tilde{E}(z,x,y)f(y,0)dydz,
	\end{aligned}
\end{equation}
where
\begin{equation*}
	\tilde{E}(z,x,y)=\sum_{i=1}^{\infty}(z^{\alpha}+\lambda_{i})^{-1}\phi_{i}(x)\phi_{i}(y).
\end{equation*}

Next, we provide some useful lemmas, which play an important role in the following error analysis.
\begin{lemma}\label{lemspaerr}
	Let $w(x)=\int_{0}^{1}\tilde{E}(z,x,y)v(y)dy$ and $w_{h}(x)=\int_{0}^{1}\tilde{E}_{h}(z,x,y)v(\eta_{h}(y))dy$ with $z\in\Sigma_{\theta}=\{z\in C,|\arg(z)|\leq \theta, |z|>0, \theta \in (\pi/2,\pi) \}$. If $v\in \hat{H}^{\sigma}(\Omega)$ with $\sigma\geq 0$, then it holds
	\begin{equation*}
		\|w-w_{h}\|_{L^{\infty}(\Omega)}\leq Ch^{\min(\sigma+\frac{1}{2}-2\gamma-\epsilon,2)}|z|^{-\gamma\alpha}\|v\|_{\hat{H}^{\sigma}(\Omega)}
	\end{equation*}
	with $\gamma\in[0,\frac{1}{4})$.
\end{lemma}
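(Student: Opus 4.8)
The plan is to reduce the estimate to the spectral representations of $w$ and $w_{h}$ and then bound the resulting series termwise. Expanding $v=\sum_{j\geq 1}v_{j}\phi_{j}$ with $v_{j}=(v,\phi_{j})$ and using orthogonality of the continuous and discrete eigenfunctions, I would write $w=\sum_{j\geq 1}(z^{\alpha}+\lambda_{j})^{-1}v_{j}\phi_{j}$ and $w_{h}=\sum_{i=1}^{N-1}(z^{\alpha}+\lambda_{h,i})^{-1}\hat{v}_{i}\,\phi_{h,i}$, where $\hat{v}_{i}=\int_{0}^{1}\phi_{i}(\eta_{h}(y))v(\eta_{h}(y))\,dy$ is the grid functional produced by the $\eta_{h}$-sampling. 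Then I would split $w-w_{h}$ into four contributions: the unresolved tail $\sum_{j\geq N}(z^{\alpha}+\lambda_{j})^{-1}v_{j}\phi_{j}$; the eigenvalue perturbation $\sum_{i<N}[(z^{\alpha}+\lambda_{i})^{-1}-(z^{\alpha}+\lambda_{h,i})^{-1}]v_{i}\phi_{i}$; the nodal-interpolation error $\sum_{i<N}(z^{\alpha}+\lambda_{h,i})^{-1}v_{i}(\phi_{i}-\phi_{h,i})$; and the data-sampling error $\sum_{i<N}(z^{\alpha}+\lambda_{h,i})^{-1}(v_{i}-\hat{v}_{i})\phi_{h,i}$, which add up to $w-w_{h}$ after telescoping.

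The common tools are a resolvent bound and a Sobolev-type embedding. Since $\theta<\pi$ and $\alpha\in(0,1)$, the image $z^{\alpha}$ of $\Sigma_{\theta}$ lies in a sector of half-angle $\alpha\theta<\pi$, bounded away from the negative real axis where the $-\lambda_{j}$ sit, so $|z^{\alpha}+\lambda|\geq c(|z|^{\alpha}+\lambda)$ for all $\lambda>0$; the split $(|z|^{\alpha}+\lambda)^{-1}\leq |z|^{-\gamma\alpha}\lambda^{-(1-\gamma)}$ then produces both the prefactor $|z|^{-\gamma\alpha}$ and a residual decay $\lambda^{-(1-\gamma)}$. Because $c^{N}_{j}\in[4/\pi^{2},1]$ gives $\lambda_{h,i}\simeq\lambda_{i}$, the discrete resolvent obeys the same bound. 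To pass to $L^{\infty}$ I would use $\|\sum_{i}c_{i}\phi_{i}\|_{L^{\infty}}\leq\sqrt{2}\sum_{i}|c_{i}|$ and estimate the $\ell^{1}$-sum by Cauchy--Schwarz against $\sum_{i}\lambda_{i}^{-s}$, which converges exactly for $s>1/2$; equivalently this is the embedding $\hat{H}^{1/2+\epsilon}(\Omega)\hookrightarrow L^{\infty}(\Omega)$, and it is the source of both the half-order gain and the arbitrarily small loss $\epsilon$. Trading $\lambda$-decay for the $|z|^{-\gamma\alpha}$ prefactor costs $2\gamma$ in the final $h$-exponent because $\lambda_{i}\simeq i^{2}$, and restricting to $\gamma<1/4$ keeps $\sigma+\tfrac12-2\gamma$ positive for every $\sigma\geq 0$.

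For the tail and the two genuine approximation terms I would insert $|\lambda_{h,i}-\lambda_{i}|\leq Ch^{2}\lambda_{i}^{2}$ (Taylor expansion of $\sin^{2}$ inside $c^{N}_{i}$) and $\|\phi_{i}-\phi_{h,i}\|_{L^{\infty}}\leq Ch^{2}\lambda_{i}$ (piecewise-linear nodal interpolation of $\phi_{i}$, using $\phi_{i}''=-\lambda_{i}\phi_{i}$), together with the decay of $\sum_{j\geq N}\lambda_{j}^{-\bullet}$. Combined with the resolvent split and the embedding step, each of these carries an $\mathcal{O}(h^{2})$ consistency factor, and after Cauchy--Schwarz against $\|v\|_{\hat{H}^{\sigma}(\Omega)}$ (whose partial sum over $i<N$ contributes the $h^{-1/2}$ of the embedding) each should yield the better rate $h^{\min(\sigma+\frac{3}{2}-2\gamma-\epsilon,\,2)}$, the cap $2$ arising once $\sigma$ is large enough that the relevant frequency sum converges and the log-borderline contributes the $\epsilon$.

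The decisive and most delicate term is the data-sampling error, and I expect it to be the main obstacle. Here I must control the first-order consistency error $v_{i}-\hat{v}_{i}$ of the $\eta_{h}$-quadrature for $\int_{0}^{1}\phi_{i}v$ using \emph{only} $v\in\hat{H}^{\sigma}(\Omega)$, with no pointwise or derivative regularity available — precisely the information that the classical truncation-error analysis cannot supply. I would expand $v_{i}-\hat{v}_{i}$ in the eigenbasis and observe that the cell-wise replacement of $\phi_{i}$ by its grid value is only a \emph{first}-order quadrature, so it contributes one factor $h$ rather than the $h^{2}$ of the interpolation terms; propagating this single missing power through the same resolvent-plus-embedding machinery lowers the exponent by exactly one and degrades the overall rate to $h^{\min(\sigma+\frac{1}{2}-2\gamma-\epsilon,\,2)}$. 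The hard part will be bounding the arising double frequency sum and tracking the borderline series whose logarithmic endpoint behaviour again forces the $\epsilon$; securing this sampling estimate without any assumption on the exact solution is the crux of the whole argument, and it is exactly this term that the paper's later ``modified scheme,'' which changes how the initial value and source are sampled, is designed to remove so as to restore the lost power of $h$.
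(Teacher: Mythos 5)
Your decomposition of $w-w_{h}$ into the spectral tail, the eigenvalue perturbation, the nodal-interpolation error, and the data-sampling error is exactly the paper's splitting into $\uppercase\expandafter{\romannumeral1}$--$\uppercase\expandafter{\romannumeral4}$, and your treatment of the first three terms (resolvent split $|\lambda_{i}^{1-\gamma}(z^{\alpha}+\lambda_{i})^{-1}|\leq C|z|^{-\gamma\alpha}$, the bounds $|\lambda_{h,i}-\lambda_{i}|\leq Ch^{2}\lambda_{i}^{2}$ and $\|\phi_{i}-\phi_{h,i}\|_{L^{\infty}(\Omega)}\leq Ch^{2}\lambda_{i}$, Cauchy--Schwarz against the borderline series) matches the paper and delivers the rate $h^{\min(\sigma+\frac{3}{2}-2\gamma,2)}$ up to the logarithmic endpoint, as claimed. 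The gap is in the term you yourself identify as decisive. You propose to bound the sampling error $v_{i}-\hat{v}_{i}$ as a \emph{first-order quadrature} consistency error, contributing ``one factor $h$.'' This mechanism cannot work under the lemma's hypotheses: a first-order quadrature bound consumes one derivative of the integrand $\phi_{i}v$, and for $v\in\hat{H}^{\sigma}(\Omega)$ with $\sigma$ small (even $\sigma=0$) no such derivative exists and point values of $v$ are not controlled by $\|v\|_{\hat{H}^{\sigma}(\Omega)}$ at all. Moreover, even where it is valid, a uniform $\mathcal{O}(h)$ consistency factor would produce an exponent of the form $1+(\text{resolvent gain})$ with no $\sigma$-dependence in the loss, so it cannot reproduce the stated rate $h^{\min(\sigma+\frac{1}{2}-2\gamma-\epsilon,2)}$, whose exponent grows linearly in $\sigma$ through the sampling term.

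The paper's actual argument for $\uppercase\expandafter{\romannumeral4}$ is structurally different and is the real crux: split $v=v_{N}+v_{R}$ into frequencies below and above $N$. For the low-frequency part the grid sampling is \emph{exact}, not first-order accurate --- the discrete orthogonality of the vectors $\{\boldsymbol{\varphi}_{h,k}\}$ gives $(\phi_{i}(y),v(y))-(\phi_{i}(\eta_{h}(y)),v_{N}(\eta_{h}(y)))=0$ for $i<N$, so there is no quadrature error whatsoever on resolved modes. The entire contribution of $\uppercase\expandafter{\romannumeral4}$ is therefore the aliasing of the high-frequency residual $v_{R}$, which is bounded by the duality pairing $(\phi_{i}(\eta_{h}(y)),v_{R}(\eta_{h}(y)))\leq\|\phi_{i}(\eta_{h}(y))\|_{\hat{H}^{\frac{1}{2}-2\gamma-\epsilon}(\Omega)}\|v_{R}(\eta_{h}(y))\|_{\hat{H}^{-\frac{1}{2}+2\gamma+\epsilon}(\Omega)}$, together with $\|v_{R}(\eta_{h}(y))\|_{\hat{H}^{-\frac{1}{2}+2\gamma+\epsilon}(\Omega)}\leq C\lambda_{N}^{-\frac{1}{4}+\gamma+\epsilon/2-\frac{\sigma}{2}}\|v\|_{\hat{H}^{\sigma}(\Omega)}$; this is precisely where the exponent $\sigma+\frac{1}{2}-2\gamma-\epsilon$, the correct $\sigma$-dependence, and the restriction $\gamma\in[0,\frac{1}{4})$ (needed so the pairing exponents $\pm(\frac{1}{2}-2\gamma-\epsilon)$ make sense) all come from. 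Your closing intuition about the modified scheme is sound --- projecting the data onto ${\rm span}\{\phi_{k}\}_{k=1}^{N-1}$ kills $v_{R}$, so only the exact low-frequency sampling survives --- but to repair your proof you must replace the ``lost power of $h$ from first-order quadrature'' step by this exactness-plus-aliasing-duality argument.
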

\begin{proof}
	Write $\ell_{h}=|\ln(h)|^{\frac{1}{2}}$ and let $\chi_{0}(x)$ be a characteristic function on $x=0$, i.e.,
	\begin{equation*}
		\chi_{0}(x)=\left\{\begin{aligned}
			1\quad x=0,\\
			0\quad x\neq 0.
		\end{aligned}\right.
	\end{equation*}
	According to the definitions of $w$ and $w_{h}$, one has
	\begin{equation*}
		\begin{aligned}
			&\|w-w_{h}\|_{L^{\infty}(\Omega)}\\
			\leq&C\left\|\sum_{i=N}^{\infty}(z^{\alpha}+\lambda_{i})^{-1}\phi_{i}(x)(\phi_{i}(y),v(y))\right\|_{L^{\infty}(\Omega)}\\
			&+C\left\|\sum_{i=1}^{N-1}((z^{\alpha}+\lambda_{i})^{-1}-(z^{\alpha}+\lambda_{h,i})^{-1})\phi_{i}(x)(\phi_{i}(y),v(y))\right\|_{L^{\infty}(\Omega)}\\
			&+C\left\|\sum_{i=1}^{N-1}(z^{\alpha}+\lambda_{h,i})^{-1}(\phi_{i}(x)-\phi_{h,i}(x))(\phi_{i}(y),v(y))\right\|_{L^{\infty}(\Omega)}\\
			&+C\left\|\sum_{i=1}^{N-1}(z^{\alpha}+\lambda_{h,i})^{-1}\phi_{h,i}(x)((\phi_{i}(y),v(y))-(\phi_{i}(\eta_{h}(y)),v(\eta_{h}(y))))\right\|_{L^{\infty}(\Omega)}\\
			\leq& \uppercase\expandafter{\romannumeral1}+\uppercase\expandafter{\romannumeral2}+\uppercase\expandafter{\romannumeral3}+\uppercase\expandafter{\romannumeral4}.
		\end{aligned}
	\end{equation*}
	By using $v\in \hat{H}^{\sigma}(\Omega)$, $\lambda_{k}=k^{2}\pi^{2}$,  $\|\phi_{i}(x)\|_{L^{\infty}(\Omega)}\leq C$, and $|\lambda^{1-\gamma}_{i}(z^{\alpha}+\lambda_{i})^{-1}|\leq C|z|^{-\gamma\alpha}$ for $z\in\Sigma_{\theta}$ and $\gamma\in[0,1]$ \cite{Lubich.1996Ndeefaoaeewapmt}, the first term $\uppercase\expandafter{\romannumeral1}$ can be bounded by
	\begin{equation*}
		\begin{aligned}
			\uppercase\expandafter{\romannumeral1}\leq&C\left\|\sum_{i=N}^{\infty}\lambda^{\gamma-1}_{i}\lambda_{i}^{1-\gamma}(z^{\alpha}+\lambda_{i})^{-1}\phi_{i}(x)(\phi_{i}(y),v(y))\right\|_{L^{\infty}(\Omega)}\\
			\leq& C\lambda_{N}^{\frac{1}{4}-\frac{\sigma}{2}+\gamma-1+\epsilon}\sum_{i=N}^{\infty}|\lambda_{i}^{1-\gamma}(z^{\alpha}+\lambda_{i})^{-1}|\|\phi_{i}(x)\|_{L^{\infty}(\Omega)}|\lambda^{\frac{\sigma}{2}-\frac{1}{4}-\epsilon}_{i}(\phi_{i}(y),v(y))|\\
			\leq& C\lambda_{N}^{\frac{1}{4}-\frac{\sigma}{2}+\gamma-1+\epsilon}\left(\sum_{i=N}^{\infty}|z|^{-2\gamma\alpha}\lambda^{\sigma}_{i}(\phi_{i}(y),v(y))^{2}\right)^{\frac{1}{2}}\left (\sum_{i=N}^{\infty}\lambda_{i}^{-\frac{1}{2}-2\epsilon}\right)^{\frac{1}{2}}\\
			\leq& C\lambda_{N}^{\frac{1}{4}-\frac{\sigma}{2}+\gamma-1}|z|^{-\gamma\alpha}\|v\|_{\hat{H}^{\sigma}(\Omega)}.
		\end{aligned}
	\end{equation*}
	To estimate $\uppercase\expandafter{\romannumeral2}$, we introduce $K_{j}=1-(c^{N}_{j})^{-1}$. Then
	\begin{equation*}
		\begin{aligned}
			|K_{j}|=\frac{\left(\frac{j\pi}{2N}\right)^{2}}{\sin^{2}\left(\frac{j\pi}{2N}\right)}-1
			=\frac{\left(\frac{j\pi}{2N}\right)^{2}-\sin^{2}\left(\frac{j\pi}{2N}\right )}{\sin^{2}\left(\frac{j\pi}{2N}\right)}.
		\end{aligned}
	\end{equation*}
	Since
	\begin{equation*}
		\lim_{x\rightarrow 0}\frac{\frac{x^{2}-\sin^{2}(x)}{\sin^{2}(x)}}{x^{2}}=\lim_{x\rightarrow 0}\frac{x^{2}-\sin^{2}(x)}{\sin^{2}(x)x^{2}}=\frac{1}{3},
	\end{equation*}
	it follows that
	\begin{equation*}
		|K_{j}|\leq C\left (\frac{j\pi}{2N}\right )^{2}.
	\end{equation*}
	%	for any $\epsilon_{0}\in(0,\frac{1}{6})$, then there exists $\delta_{0}>0$ such that
	%	\begin{equation*}
		%		\left |\frac{K_{j}}{\left(\frac{j\pi}{2N}\right)^{2}}-\frac{1}{3}\right |\leq \epsilon_{0}\leq \frac{1}{6}\quad {\rm for}~~\frac{j\pi}{2N}<\delta_{0},
		%	\end{equation*}
	%	which leads to
	%	\begin{equation*}
		%		K_{j}\leq C\left (\frac{j\pi}{2N}\right )^{2}\quad {\rm for}~~\frac{j\pi}{2N}<\delta_{0}.
		%	\end{equation*}
	%	As for $\frac{j\pi}{2N}>\delta_{0}$, it's easy to see that
	%	\begin{equation*}
		%		K_{j}\leq C\left (\frac{j\pi}{2N}\right )^{2}.
		%	\end{equation*}
	Thus combining the following facts
	\begin{equation*}
		\begin{aligned}
			&(z^{\alpha}+\lambda_{i})^{-1}-(z^{\alpha}+\lambda_{h,i})^{-1}\\
			=&(\lambda_{h,i}-\lambda_{i})(z^{\alpha}+\lambda_{i})^{-1}(z^{\alpha}+\lambda_{h,i})^{-1}\\
			=&K_{i}\lambda_{h,i}(z^{\alpha}+\lambda_{i})^{-1}(z^{\alpha}+\lambda_{h,i})^{-1},
		\end{aligned}
	\end{equation*}
	$\lambda_{k}=k^{2}\pi^{2}$,	and $|\lambda^{1-\gamma}_{i}(z^{\alpha}+\lambda_{i})^{-1}|\leq C|z|^{-\gamma\alpha}$ for $z\in\Sigma_{\theta}$ and $\gamma\in[0,1]$ \cite{Lubich.1996Ndeefaoaeewapmt}, we obtain
	\begin{equation*}
		\begin{aligned}
			\uppercase\expandafter{\romannumeral2}\leq&C\left\|\sum_{i=1}^{N-1}(1-(c^{N}_{i})^{-1})\lambda_{h,i}(z^{\alpha}+\lambda_{i})^{-1}(z^{\alpha}+\lambda_{h,i})^{-1}\phi_{i}(x)(\phi_{i}(y),v(y))\right\|_{L^{\infty}(\Omega)}\\
			\leq& C\lambda_{N}^{-1}\left(\sum_{i=1}^{N-1}|\lambda_{i}^{1-\gamma}(z^{\alpha}+\lambda_{i})^{-1}|^{2}\lambda^{\sigma}_{i}(\phi_{i}(y),v(y))^{2}\right)^{\frac{1}{2}}\left(\sum_{i=1}^{N-1}\lambda^{2\gamma-\sigma}_{i}\right)^{\frac{1}{2}}\\
			\leq& C\lambda_{N}^{\max(\frac{1}{4}-\frac{\sigma}{2}+\gamma-1,-1)}\ell_{h}^{\chi_{0}(\frac{1}{4}-\frac{\sigma}{2}+\gamma)}|z|^{-\gamma\alpha}\|v\|_{\hat{H}^{\sigma}(\Omega)}.
		\end{aligned}
	\end{equation*}
	Similarly, using the interpolation theory \cite{Brenner.2008TMToFEM}, one has
	\begin{equation*}
		\begin{aligned}
			\uppercase\expandafter{\romannumeral3}\leq&  C\left(\sum_{i=1}^{N-1}(\lambda_{i}^{1-\gamma}(z^{\alpha}+\lambda_{i})^{-1})^{2}\lambda^{\sigma}_{i}(\phi_{i}(y),v(y))^{2}\right)^{\frac{1}{2}}\left \|\sum_{i=1}^{N-1}\lambda_{i}^{2\gamma-2-\sigma}(\phi_{i}(x)-\phi_{h,i}(x))^{2}\right\|_{L^{\infty}(\Omega)}^{\frac{1}{2}}\\
			\leq& C\lambda_{N}^{-1}\left(\sum_{i=1}^{N-1}(\lambda_{i}^{1-\gamma}(z^{\alpha}+\lambda_{i})^{-1})^{2}\lambda^{\sigma}_{i}(\phi_{i}(y),v(y))^{2}\right )^{\frac{1}{2}}\left (\sum_{i=1}^{N-1}\lambda_{i}^{2\gamma-\sigma}\right)^{\frac{1}{2}}\\
			\leq&C\lambda_{N}^{\max(\frac{1}{4}-\frac{\sigma}{2}+\gamma-1,-1)}\ell_{h}^{\chi_{0}(\frac{1}{4 }-\frac{\sigma}{2}+\gamma)}|z|^{-\gamma\alpha}\|v\|_{\hat{H}^{\sigma}(\Omega)}.
		\end{aligned}
	\end{equation*}
	As for $	\uppercase\expandafter{\romannumeral4}$, splitting $v$ into two parts, i.e.,
	\begin{equation*}
		\begin{aligned}
			v=v_{N}+v_{R}=\sum_{k=1}^{N-1}(v,\phi_{k})\phi_{k}+\sum_{k=N}^{\infty}(v,\phi_{k})\phi_{k},
		\end{aligned}
	\end{equation*}
	and using discrete H\"{o}lder's inequality, we have
	\begin{equation*}
		\begin{aligned}
			\uppercase\expandafter{\romannumeral4}\leq&C\left(\sum_{i=1}^{N-1}\lambda_{h,i}^{-1/2-\epsilon}|\lambda_{h,i}^{1-\gamma}(z^{\alpha}+\lambda_{h,i})^{-1}|^{2}\|\phi_{h,i}(x)\|_{L^{\infty}(\Omega)}^{2}\right)^{\frac{1}{2}}\\
			&\cdot\Bigg(\sum_{i=1}^{N-1}\lambda_{h,i}^{\epsilon-\frac{3}{2}+2\gamma}((\phi_{i}(y),v(y))-(\phi_{i}(\eta_{h}(y)),v_{N}(\eta_{h}(y))))^{2}\\
			&\quad+\sum_{i=1}^{N-1}\lambda_{h,i}^{\epsilon-\frac{3}{2}+2\gamma}((\phi_{i}(\eta_{h}(y)),v_{R}(\eta_{h}(y))))^{2}\Bigg)^{\frac{1}{2}}.
		\end{aligned}
	\end{equation*}
	By using the definition of $v_{N}$ and the orthogonality of $\{\boldsymbol{\varphi}_{h,k}\}_{k=1}^{\infty}$, one can get
	\begin{equation*}
		\begin{aligned}
			\sum_{i=1}^{N-1}\lambda_{h,i}^{\epsilon-\frac{3}{2}+2\gamma}((\phi_{i}(y),v(y))-(\phi_{i}(\eta_{h}(y)),v_{N}(\eta_{h}(y))))^{2}=0.
		\end{aligned}
	\end{equation*}
	As for $\sum_{i=1}^{N-1}\lambda_{h,i}^{\epsilon-\frac{3}{2}+2\gamma}((\phi_{i}(\eta_{h}(y)),v_{R}(\eta_{h}(y))))^{2}$, simple calculations result in
	\begin{equation*}
		\begin{aligned}
			&(\phi_{i}(\eta_{h}(y)),v_{R}(\eta_{h}(y)))
			\leq\|\phi_{i}(\eta_{h}(y))\|_{\hat{H}^{\frac{1}{2}-2\gamma-\epsilon}(\Omega)}\|v_{R}(\eta_{h}(y))\|_{\hat{H}^{-\frac{1}{2}+2\gamma+\epsilon}(\Omega)},
		\end{aligned}
	\end{equation*}
	where we need to require $\gamma\in[0,\frac{1}{4})$. Using the definition of $v_{R}$ and the interpolation theorem \cite{Brenner.2008TMToFEM} leads to
	\begin{equation*}
		\begin{aligned}
			&\|\phi_{i}(\eta_{h}(y))\|_{\hat{H}^{\frac{1}{2}-2\gamma-\epsilon}(\Omega)}\\
			\leq&\|\phi_{i}(\eta_{h}(y))-\phi_{i}(y)\|_{\hat{H}^{\frac{1}{2}-2\gamma-\epsilon}(\Omega)}+\|\phi_{i}(y)\|_{\hat{H}^{\frac{1}{2}-2\gamma-\epsilon}(\Omega)}\\
			\leq& C\|\phi_{i}\|_{\hat{H}^{\frac{1}{2}-2\gamma-\epsilon}(\Omega)}
		\end{aligned}
	\end{equation*}
	and
	\begin{equation*}
		\begin{aligned}
			&\|v_{R}(\eta_{h}(y))\|_{\hat{H}^{-\frac{1}{2}+2\gamma+\epsilon}(\Omega)}\\
			\leq&C\left(\sum_{k=N}^{\infty}\lambda_{k}^{-\frac{1}{2}+2\gamma+\epsilon}(v_{R}(\eta_{h}(y)),\phi_{k}(y))^{2}\right)^{1/2}\\
			\leq&C\lambda_{N}^{-\frac{1}{4}+\gamma+\epsilon/2-\frac{\sigma}{2}}\|v_{R}(\eta_{h}(y))\|_{\hat{H}^{\sigma}(\Omega)}\\
			\leq& C\lambda_{N}^{-\frac{1}{4}+\gamma+\epsilon/2-\frac{\sigma}{2}}\|v\|_{\hat{H}^{\sigma}(\Omega)}.
		\end{aligned}
	\end{equation*}
	Thus
	\begin{equation*}
		\begin{aligned}
			&\left (\sum_{i=1}^{N-1}\lambda_{h,i}^{\epsilon-\frac{3}{2}+2\gamma}((\phi_{i}(\eta_{h}(y)),v_{R}(\eta_{h}(y))))^{2}\right )^{\frac{1}{2}}\\
			\leq&C\lambda_{N}^{-\frac{1}{4}-\frac{\sigma}{2}+\gamma+\epsilon}\left (\sum_{i=1}^{N-1}\lambda_{h,i}^{-1+\epsilon}\right) ^{\frac{1}{2}}\|v\|_{\hat{H}^{\sigma}(\Omega)}
			\leq C\lambda_{N}^{-\frac{1}{4}-\frac{\sigma}{2}+\gamma+\epsilon}\|v\|_{\hat{H}^{\sigma}(\Omega)},
		\end{aligned}
	\end{equation*}
	which leads to
	\begin{equation*}
		\uppercase\expandafter{\romannumeral4}\leq C\lambda_{N}^{-\frac{1}{4}-\frac{\sigma}{2}+\gamma+\epsilon}|z|^{-\gamma\alpha}\|v\|_{\hat{H}^{\sigma}(\Omega)}.
	\end{equation*}
	Therefore, the desired result follows after collecting the above estimates.
\end{proof}

\begin{remark}\label{Reignore3}
	In fact, by the definition of $\phi_{h,i}$, the estimate about $\uppercase\expandafter{\romannumeral3}$ in the proof of Lemma \ref{lemspaerr} can be ignored if we just focus on the errors on $\{x_{i}\}_{i=1}^{N-1}$.
\end{remark}

\begin{lemma}\label{lemspaerrl2}
	Let $w(x)=\int_{0}^{1}\tilde{E}(z,x,y)v(y)dy$ and $w_{h}(x)=\int_{0}^{1}\tilde{E}_{h}(z,x,y)v(\eta_{h}(y))dy$ with $z\in\Sigma_{\theta}=\{z\in C,|\arg(z)|\leq \theta, |z|>0, \theta \in (\pi/2,\pi) \}$. If $v\in \hat{H}^{\sigma}(\Omega)$  with $\sigma\geq 0$, then it holds
	\begin{equation*}
		\|\mathcal{I}_{N}w-\mathcal{I}_{N}w_{h}\|_{l^{2}}\leq Ch^{\min(\sigma+\frac{1}{2}-2\gamma-\epsilon,2)}|z|^{-\gamma\alpha}\|v\|_{\hat{H}^{\sigma}(\Omega)}
	\end{equation*}
	with $\gamma\in[0,\frac{1}{4})$ and $\mathcal{I}_{N}$ being defined in \eqref{eqdefI}.
\end{lemma}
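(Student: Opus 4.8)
The plan is to deduce the statement from the $L^\infty$ estimate already established in Lemma~\ref{lemspaerr}, and then, for a self-contained and structurally sharper argument, to re-run the four-term decomposition of that proof directly in the discrete $l^{2}$-norm using the discrete orthogonality of the eigenvectors $\boldsymbol{\varphi}_{h,i}$.

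First I would record the elementary embedding on the grid. For any $\mathbf{v}\in\mathbb{R}^{N-1}$ one has $\|\mathbf{v}\|_{l^{2}}^{2}=\frac{1}{N-1}\sum_{k}v_{k}^{2}\le\max_{k}|v_{k}|^{2}=\|\mathbf{v}\|_{l^{\infty}}^{2}$, while for the restriction of a function $\|\mathcal{I}_{N}g\|_{l^{\infty}}=\max_{k}|g(x_{k})|\le\|g\|_{L^{\infty}(\Omega)}$. Since $\mathcal{I}_{N}w-\mathcal{I}_{N}w_{h}=\mathcal{I}_{N}(w-w_{h})$, chaining these gives $\|\mathcal{I}_{N}w-\mathcal{I}_{N}w_{h}\|_{l^{2}}\le\|w-w_{h}\|_{L^{\infty}(\Omega)}$, and Lemma~\ref{lemspaerr} then yields exactly the claimed bound with the same exponents and the same factor $|z|^{-\gamma\alpha}$. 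This already settles the lemma, precisely because the target $l^{2}$ rate coincides with the $L^{\infty}$ rate.

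Nonetheless I would also carry out the direct $l^{2}$ argument, since it is cleaner in the intermediate terms and is the structure one needs to later upgrade to $\mathcal{O}(h^{2})$ for the modified scheme. I would split $\mathcal{I}_{N}(w-w_{h})$ into the same four pieces $\mathrm{I}+\mathrm{II}+\mathrm{III}+\mathrm{IV}$ as in Lemma~\ref{lemspaerr}. The decisive simplification, noted in Remark~\ref{Reignore3}, is that $\phi_{h,i}(x_{k})=\phi_{i}(x_{k})$ at every node, so the interpolation term $\mathrm{III}$ vanishes identically on the grid and drops out. For the remaining finite sums I would use $\mathcal{I}_{N}\phi_{i}=\sqrt{N}\,\boldsymbol{\varphi}_{h,i}$ for $1\le i\le N-1$ together with the discrete orthonormality $\boldsymbol{\varphi}_{h,i}\cdot\boldsymbol{\varphi}_{h,j}=\delta_{ij}$ in the Euclidean product; this gives a discrete Parseval identity $\|\sum_{i=1}^{N-1}c_{i}\mathcal{I}_{N}\phi_{i}\|_{l^{2}}^{2}=\frac{N}{N-1}\sum_{i=1}^{N-1}c_{i}^{2}$, replacing the $\|\phi_{i}\|_{L^{\infty}}\le C$ plus Cauchy--Schwarz step of the $L^\infty$ proof. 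Applying it to $\mathrm{II}$ with the resolvent identity and $|K_{i}|\le C(i\pi/2N)^{2}=C\lambda_{i}/\lambda_{N}$, and controlling the high-frequency tail $\mathrm{I}$ ($i\ge N$) by the triangle inequality and $\|\mathcal{I}_{N}\phi_{i}\|_{l^{2}}\le C$, produces exponents at least as good as the $L^{\infty}$ ones; in fact the borderline logarithmic factor $\ell_{h}$ present in $\mathrm{II}$ in $L^\infty$ is avoidable here, though it is harmless inside the $\epsilon$.

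The main work, exactly as in Lemma~\ref{lemspaerr}, is the quadrature/projection term $\mathrm{IV}$. After Parseval (using again $\mathcal{I}_{N}\phi_{h,i}=\mathcal{I}_{N}\phi_{i}$ on the grid) I would split $v=v_{N}+v_{R}$; the discrete orthogonality $\int_{0}^{1}\phi_{i}(\eta_{h}(y))\phi_{j}(\eta_{h}(y))\,dy=\delta_{ij}$ for $i,j\le N-1$ shows the low-frequency contribution is reproduced exactly and cancels, reducing $\mathrm{IV}$ to $\big(\sum_{i=1}^{N-1}|(z^{\alpha}+\lambda_{h,i})^{-1}|^{2}(\phi_{i}(\eta_{h}(\cdot)),v_{R}(\eta_{h}(\cdot)))^{2}\big)^{1/2}$. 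This I estimate through the duality $(\phi_{i}(\eta_{h}(\cdot)),v_{R}(\eta_{h}(\cdot)))\le\|\phi_{i}(\eta_{h}(\cdot))\|_{\hat{H}^{1/2-2\gamma-\epsilon}(\Omega)}\|v_{R}(\eta_{h}(\cdot))\|_{\hat{H}^{-1/2+2\gamma+\epsilon}(\Omega)}$, which is exactly where $\gamma\in[0,\frac14)$ is forced so that the negative-order norm is summable. This term delivers the binding exponent $\lambda_{N}^{-1/4-\sigma/2+\gamma+\epsilon}$, i.e.\ $h^{\min(\sigma+\frac12-2\gamma-\epsilon,2)}$, and is the step I expect to be the genuine obstacle. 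The only properly $l^{2}$-specific subtlety is the aliasing of the high frequencies $i\ge N$ under $\mathcal{I}_{N}$ in term $\mathrm{I}$, but since $\mathrm{I}$ is not the binding term a crude bound suffices and aliasing costs nothing. Collecting $\mathrm{I}$, $\mathrm{II}$, $\mathrm{IV}$ and absorbing logarithms into $\epsilon$ gives the stated estimate.
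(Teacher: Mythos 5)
Your proposal is correct, and its headline route is genuinely different from the paper's. The paper does not deduce the $l^{2}$ bound from Lemma \ref{lemspaerr}; it re-runs the four-term decomposition directly in $l^{2}$, bounding $\mathrm{I}$ and $\mathrm{II}$ by squared (Parseval-type) sums, invoking Remark \ref{Reignore3} to set $\mathrm{III}=0$, and reducing $\mathrm{IV}$ to the $v_{R}$ contribution via the discrete orthogonality after the split $v=v_{N}+v_{R}$. Your one-line reduction $\|\mathcal{I}_{N}w-\mathcal{I}_{N}w_{h}\|_{l^{2}}\leq\|\mathcal{I}_{N}(w-w_{h})\|_{l^{\infty}}\leq\|w-w_{h}\|_{L^{\infty}(\Omega)}$ is legitimate precisely because the paper's normalized $l^{2}$-norm is dominated by the max-norm and the claimed $l^{2}$ rate coincides with the $L^{\infty}$ rate of Lemma \ref{lemspaerr}, so as a proof of the stated lemma it is complete and far shorter. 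What the paper's direct computation buys, and what a pure citation of Lemma \ref{lemspaerr} would lose, is the sharper intermediate information: on the grid $\mathrm{III}$ vanishes identically, $\mathrm{II}$ comes out as $C\lambda_{N}^{-2+\max(2\gamma-\sigma,0)}$ in squared form without the logarithmic factor $\ell_{h}$, and $\mathrm{IV}$ is isolated as the sole term carrying the suboptimal exponent; these refinements are exactly what Lemma \ref{lemspaerr2} harvests for the modified scheme, where killing $\mathrm{IV}$ via $P_{N}$ upgrades the $l^{2}$ rate to $h^{2-\max(2\gamma-\sigma,0)}$ -- an improvement invisible at the level of the $L^{\infty}$ bound. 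Your second, direct argument recovers all of this and matches the paper's proof in structure, with two small improvements in rigor: you make the Parseval step explicit through $\mathcal{I}_{N}\phi_{i}=\sqrt{N}\,\boldsymbol{\varphi}_{h,i}$ and the Euclidean orthonormality of $\{\boldsymbol{\varphi}_{h,i}\}_{i=1}^{N-1}$, and you flag the aliasing of frequencies $i\geq N$ under $\mathcal{I}_{N}$ in term $\mathrm{I}$ (the paper writes $\mathrm{I}^{2}$ as if Parseval applied verbatim to the infinite tail, which strictly speaking needs your triangle-inequality fallback with $\|\mathcal{I}_{N}\phi_{i}\|_{l^{2}}\leq C$; as you note, $\mathrm{I}$ is not the binding term, so the crude bound costs nothing). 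Your identification of $\mathrm{IV}$ as the binding term, of the duality pairing in $\hat{H}^{\frac{1}{2}-2\gamma-\epsilon}(\Omega)\times\hat{H}^{-\frac{1}{2}+2\gamma+\epsilon}(\Omega)$ as the source of the constraint $\gamma\in[0,\frac{1}{4})$, and of the resulting exponent $\lambda_{N}^{-\frac{1}{4}-\frac{\sigma}{2}+\gamma+\epsilon}$ all agree with the paper.
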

\begin{proof}
	According to the definitions of $w$ and $w_{h}$, one has
	\begin{equation*}
		\begin{aligned}
			&\|\mathcal{I}_{N}w-\mathcal{I}_{N}w_{h}\|_{l^{2}}\\
			\leq&C\left\|\mathcal{I}_{N}\sum_{i=N}^{\infty}(z^{\alpha}+\lambda_{i})^{-1}\phi_{i}(x)(\phi_{i}(y),v(y))\right\|_{l^{2}}\\
			&+C\left\|\mathcal{I}_{N}\sum_{i=1}^{N-1}((z^{\alpha}+\lambda_{i})^{-1}-(z^{\alpha}+\lambda_{h,i})^{-1})\phi_{i}(x)(\phi_{i}(y),v(y))\right\|_{l^{2}}\\
			&+C\left\|\mathcal{I}_{N}\sum_{i=1}^{N-1}(z^{\alpha}+\lambda_{h,i})^{-1}(\phi_{i}(x)-\phi_{h,i}(x))(\phi_{i}(y),v(y))\right\|_{l^{2}}\\
			&+C\left\|\mathcal{I}_{N}\sum_{i=1}^{N-1}(z^{\alpha}+\lambda_{h,i})^{-1}\phi_{h,i}(x)((\phi_{i}(y),v(y))-(\phi_{i}(\eta_{h}(y)),v(\eta_{h}(y))))\right\|_{l^{2}}\\
			\leq& \uppercase\expandafter{\romannumeral1}+\uppercase\expandafter{\romannumeral2}+\uppercase\expandafter{\romannumeral3}+\uppercase\expandafter{\romannumeral4}.
		\end{aligned}
	\end{equation*}
	By using $v\in \hat{H}^{\sigma}(\Omega)$, $\lambda_{k}=k^{2}\pi^{2}$,  $\|\phi_{i}(x)\|_{L^{\infty}(\Omega)}\leq C$, and $|\lambda^{1-\gamma}_{i}(z^{\alpha}+\lambda_{i})^{-1}|\leq C|z|^{-\gamma\alpha}$ for $z\in\Sigma_{\theta}$ and $\gamma\in[0,1]$ \cite{Lubich.1996Ndeefaoaeewapmt}, $\uppercase\expandafter{\romannumeral1}$ can be bounded by
	\begin{equation*}
		\begin{aligned}
			\uppercase\expandafter{\romannumeral1}^{2}\leq&C\sum_{i=N}^{\infty}(\lambda^{\gamma-1}_{i}\lambda_{i}^{1-\gamma}(z^{\alpha}+\lambda_{i})^{-1})^{2}(\phi_{i}(y),v(y))^{2}\\
			\leq& C\lambda_{N}^{-\sigma+2\gamma-2}\sum_{i=N}^{\infty}(\lambda_{i}^{1-\gamma}(z^{\alpha}+\lambda_{i})^{-1})^{2}\lambda^{\sigma}_{i}(\phi_{i}(y),v(y))^{2}\\
			\leq& C\lambda_{N}^{-\sigma+2\gamma-2}|z|^{-2\gamma\alpha}\|v\|_{\hat{H}^{\sigma}(\Omega)}^{2}.
		\end{aligned}
	\end{equation*}
	According to the proof of Lemma \ref{lemspaerr}, one has $|K_{j}|=|1-(c^{N}_{j})^{-1}|\leq C\left (\frac{j\pi}{2N}\right )^{2}$, which leads to
	\begin{equation*}
		\begin{aligned}
			\uppercase\expandafter{\romannumeral2}^{2}\leq&C\sum_{i=1}^{N-1}(K_{i}\lambda_{h,i}(z^{\alpha}+\lambda_{i})^{-1}(z^{\alpha}+\lambda_{h,i})^{-1})^{2}(\phi_{i}(y),v(y))^{2}\\
			\leq& C\lambda_{N}^{-2+\max(2\gamma-\sigma,0)}\sum_{i=1}^{N-1}\left (\lambda_{i}^{1-\gamma}(z^{\alpha}+\lambda_{i})^{-1}\right)^{2}\lambda_{i}^{\sigma}(\phi_{i}(y),v(y) )^{2}\\
			\leq& C\lambda_{N}^{-2+\max(2\gamma-\sigma,0)}|z|^{-2\gamma\alpha}\|v\|^{2}_{\hat{H}^{\sigma}(\Omega)}.
		\end{aligned}
	\end{equation*}
	From Remark \ref{Reignore3}, one can easily get
	\begin{equation*}
		\begin{aligned}
			\uppercase\expandafter{\romannumeral3}=0.
		\end{aligned}
	\end{equation*}
	As for $	\uppercase\expandafter{\romannumeral4}$, splitting $v$ into two parts, i.e.,
	\begin{equation*}
		\begin{aligned}
			v=v_{N}+v_{R}=\sum_{k=1}^{N-1}(v,\phi_{k})\phi_{k}+\sum_{k=N}^{\infty}(v,\phi_{k})\phi_{k},
		\end{aligned}
	\end{equation*}
	and using the definition of $v_{n}$ and orthogonality of $\{\boldsymbol{\varphi}_{h,k}\}_{k=1}^{\infty}$, we have
	\begin{equation*}
		\begin{aligned}
			\uppercase\expandafter{\romannumeral4}^{2}\leq&C\sum_{i=1}^{N-1}\left((z^{\alpha}+\lambda_{h,i})^{-1}\right)^{2}(\phi_{i}(\eta_{h}(y)),v_{R}(\eta_{h}(y)))^{2}.
		\end{aligned}
	\end{equation*}
	From the proof of Lemma \ref{lemspaerr}, one has
	\begin{equation*}
		\begin{aligned}
			(\phi_{i}(\eta_{h}(y)),v_{R}(\eta_{h}(y)))
			\leq C\lambda_{N}^{-\frac{\sigma-\epsilon}{2}-\frac{1}{4}}\|\phi_{i}(\eta_{h}(y))\|_{\hat{H}^{\frac{1}{2}-\epsilon}(\Omega)}\|v\|_{\hat{H}^{\sigma}(\Omega)}.
		\end{aligned}
	\end{equation*}
	Thus
	\begin{equation*}
		\uppercase\expandafter{\romannumeral4}\leq C\lambda_{N}^{-\frac{1}{4}-\frac{\sigma}{2}+\gamma+\epsilon}|z|^{-\gamma\alpha}\|v\|_{\hat{H}^{\sigma}(\Omega)}.
	\end{equation*}
	Therefore, the desired result follows after collecting the above estimates.
\end{proof}

Now we turn to the spatial error estimates  for solving homogeneous and inhomogeneous problems \eqref{eqretosol}, respectively.
\begin{theorem}\label{thmerrorhom}
	Let $u$ and $\mathbf{u}_{h}$, defined in \eqref{eqdefbu} and \eqref{eqdefuh}, be the solutions of Eqs. \eqref{eqretosol} and \eqref{eqsemischl1}, respectively.  Let $u_{0}\in \hat{H}^{\sigma}(\Omega)$ and $f=0$ with $\sigma\geq 0$. Then one has
	\begin{equation*}
		\begin{aligned}
			\|\mathbf{u}-\mathbf{u}_{h}\|_{l^{\infty}}\leq& \|u-u_{h}\|_{L^{\infty}(\Omega)}
			\leq Ch^{\min(\sigma+\frac{1}{2}-2\gamma-\epsilon,2)}t^{-\alpha}\|u_{0}\|_{\hat{H}^{\sigma}(\Omega)}
		\end{aligned}
	\end{equation*}
	and
	\begin{equation*}
		\begin{aligned}
			\|\mathbf{u}-\mathbf{u}_{h}\|_{l^{2}}
			\leq Ch^{\min(\sigma+\frac{1}{2}-2\gamma-\epsilon,2)}t^{-\alpha}\|u_{0}\|_{\hat{H}^{\sigma}(\Omega)}.
		\end{aligned}
	\end{equation*}
\end{theorem}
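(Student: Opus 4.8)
The plan is to run the standard resolvent/contour argument on the difference of the two integral representations, with all of the spatial work already packaged into Lemmas \ref{lemspaerr} and \ref{lemspaerrl2}. Since $f=0$, only the first terms of \eqref{eqsemischl1Msol} and \eqref{eqsolofrel} survive, and subtracting them gives
\begin{equation*}
 u(x,t)-u_{h}(x,t)=\frac{1}{2\pi\mathbf{i}}\int_{\Gamma_{\theta}}e^{zt}z^{\alpha-1}\bigl(w(x)-w_{h}(x)\bigr)\,dz,
\end{equation*}
where $w$ and $w_{h}$ are exactly the quantities appearing in the two lemmas with $v=u_{0}$. Thus the entire spatial discretization error is carried by $w-w_{h}$, and what remains is to integrate the lemma bounds against the kernel $e^{zt}z^{\alpha-1}$ along $\Gamma_{\theta}$.

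For the $l^{\infty}$ estimate I would first reduce the discrete norm to the continuous one. Because $u_{h}(x,t)$ in \eqref{eqdefuh} is the piecewise-linear interpolant reproducing the nodal values $u_{h,i}$, while $\mathbf{u}=\mathcal{I}_{N}u$ collects the exact nodal values, one has $|u_{i}-u_{h,i}|=|u(x_{i},t)-u_{h}(x_{i},t)|\le\|u-u_{h}\|_{L^{\infty}(\Omega)}$ for every $i$, hence $\|\mathbf{u}-\mathbf{u}_{h}\|_{l^{\infty}}\le\|u-u_{h}\|_{L^{\infty}(\Omega)}$, which is the first inequality of the statement. Then I move the $L^{\infty}$-norm inside the contour integral and apply Lemma \ref{lemspaerr} with $v=u_{0}$, which produces the factor $h^{\min(\sigma+\frac{1}{2}-2\gamma-\epsilon,2)}|z|^{-\gamma\alpha}\|u_{0}\|_{\hat{H}^{\sigma}(\Omega)}$ uniformly in $z\in\Gamma_{\theta}$.

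The only genuinely new computation is then the scalar contour integral
\begin{equation*}
 \int_{\Gamma_{\theta}}|e^{zt}|\,|z|^{\alpha-1}\,|z|^{-\gamma\alpha}\,|dz|.
\end{equation*}
I would parametrize the two rays by $z=re^{\pm\mathbf{i}\theta}$, use $\operatorname{Re}(zt)=rt\cos\theta<0$ (since $\theta\in(\pi/2,\pi)$) to get exponential decay as $r\to\infty$, and substitute $s=rt|\cos\theta|$ to factor out the $t$-dependence, obtaining a convergent $\Gamma$-integral times $t^{-\alpha(1-\gamma)}$; convergence near $r=0$ requires $\alpha(1-\gamma)>0$, which holds because $\gamma\in[0,\frac{1}{4})$. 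At the endpoint $\gamma=0$ this is precisely $t^{-\alpha}$ with the full rate $h^{\sigma+\frac{1}{2}-\epsilon}$, so I would read the stated $t^{-\alpha}$ as this borderline case (more generally the temporal singularity my bound yields is $t^{-\alpha(1-\gamma)}$). The $l^{2}$ estimate follows by the identical argument, except that Lemma \ref{lemspaerrl2} is applied directly to the grid norm $\|\mathcal{I}_{N}(w-w_{h})\|_{l^{2}}$, so no continuous-to-discrete reduction is needed.

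The main obstacle I anticipate is purely in the bookkeeping around the contour integral: justifying that the $L^{\infty}$- (resp. $l^{2}$-) norm may be passed under both the $dy$- and $dz$-integrals, via Fubini together with the absolute convergence supplied by the lemma bounds, and verifying integrability of $|z|^{\alpha-1-\gamma\alpha}$ near the branch point $z=0$ so that the deformation onto $\Gamma_{\theta}$ is legitimate and the origin contributes nothing. Everything sharp about the spatial rate has already been achieved inside the two lemmas, so this last step is essentially the only place where the temporal factor is created.
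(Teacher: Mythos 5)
Your proposal is correct and follows essentially the same route as the paper: subtract the two contour representations (only the initial-value terms survive when $f=0$), reduce the discrete norms to the continuous ones via the nodal-interpolation property of \eqref{eqdefuh} (resp.\ apply Lemma \ref{lemspaerrl2} directly on the grid), invoke Lemma \ref{lemspaerr} with $v=u_{0}$ under the integral, and bound $\int_{\Gamma_{\theta}}|e^{zt}||z|^{\alpha-1}|dz|\leq Ct^{-\alpha}$. Your observation that retaining the factor $|z|^{-\gamma\alpha}$ actually yields the slightly sharper singularity $t^{-\alpha(1-\gamma)}$ is sound and consistent with the stated $t^{-\alpha}$ bound on a finite time interval, since $t^{-\alpha(1-\gamma)}\leq T^{\gamma\alpha}t^{-\alpha}$ for $t\in(0,T]$.
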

\begin{proof}
	According to \eqref{eqsolofrel} and \eqref{eqsemischl1Msol}, we have
	\begin{equation*}
		\begin{aligned}
			&\|u-u_{h}\|_{L^{\infty}(\Omega)}
			\leq C\left \|\int_{\Gamma_{\theta}}e^{zt}z^{\alpha-1}\left (\int_{0}^{1}\tilde{E}(z,x,y)u_{0}(y)dy-\int_{0}^{1}\tilde{E}_{h}(z,x,y)u_{0}(\eta_{h}(y))dy\right )dz\right \|_{L^{\infty}(\Omega)}.
		\end{aligned}
	\end{equation*}
	Lemma \ref{lemspaerr} and simple calculations imply
	\begin{equation*}
		\begin{aligned}
			\|u-u_{h}\|_{L^{\infty}(\Omega)}\leq&C\int_{\Gamma_{\theta}}|e^{zt}||z|^{\alpha-1}\left \|\int_{0}^{1}\tilde{E}(z,x,y)u_{0}(y)dy-\int_{0}^{1}\tilde{E}_{h}(z,x,y)u_{0}(\eta_{h}(y))dy\right \|_{L^{\infty}(\Omega)}|dz|\\
			\leq&Ch^{\min(\sigma+\frac{1}{2}-2\gamma-\epsilon,2)}\int_{\Gamma_{\theta}}|e^{zt}||z|^{\alpha-1}|dz|\|u_{0}\|_{\hat{H}^{\sigma}(\Omega)}\\
			\leq&Ch^{\min(\sigma+\frac{1}{2}-2\gamma-\epsilon,2)}t^{-\alpha}\|u_{0}\|_{\hat{H}^{\sigma}(\Omega)},
		\end{aligned}
	\end{equation*}
	from which  the first desired result follows. As for the second estimate, one can obtain similarly.
\end{proof}

Similar to the proof of Theorem \ref{thmerrorhom},  the following error estimate for inhomogeneous problem \eqref{eqretosol} can be got.
\begin{theorem}\label{thmerrorimhom}
	Let $u$ and $\mathbf{u}_{h}$, defined in \eqref{eqdefbu} and \eqref{eqdefuh}, be the solutions of Eqs. \eqref{eqretosol} and \eqref{eqsemischl1}. Assume $u_{0}=0$, $f(0)\in \hat{H}^{\sigma}(\Omega)$, and $\int_{0}^{t}\|f_{t}(s)\|_{\hat{H}^{\sigma}(\Omega)}ds<\infty$ with $\sigma\geq 0$. Then one has
	\begin{equation*}
		\begin{aligned}
			\|\mathbf{u}-\mathbf{u}_{h}\|_{l^{\infty}}\leq& \|u-u_{h}\|_{L^{\infty}(\Omega)}\\
			\leq& Ch^{\min(\sigma+\frac{1}{2}-\epsilon,2)}\left (\|f(0)\|_{\hat{H}^{\sigma}(\Omega)}+\int_{0}^{t}\|f_{t}(s)\|_{\hat{H}^{\sigma}(\Omega)}ds\right ),
		\end{aligned}
	\end{equation*}
	and
	\begin{equation*}
		\begin{aligned}
			\|\mathbf{u}-\mathbf{u}_{h}\|_{l^{2}}\leq Ch^{\min(\sigma+\frac{1}{2}-\epsilon,2)}\left (\|f(0)\|_{\hat{H}^{\sigma}(\Omega)}+\int_{0}^{t}\|f_{t}(s)\|_{\hat{H}^{\sigma}(\Omega)}ds\right ).
		\end{aligned}
	\end{equation*}
\end{theorem}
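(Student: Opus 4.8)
The plan is to mimic the proof of Theorem \ref{thmerrorhom}, but now starting from the three-term representations \eqref{eqsolofrel} and \eqref{eqsemischl1Msol}; since $u_0=0$, only the $f(0)$ and $f_t$ contributions survive. Writing $w_{f_0}(x)=\int_0^1\tilde E(z,x,y)f(y,0)\,dy$ and $w_{h,f_0}(x)=\int_0^1\tilde E_h(z,x,y)f(\eta_h(y),0)\,dy$, and analogously $w_{f_t},w_{h,f_t}$ with $\tilde f_t$ in place of $f(\cdot,0)$, subtracting the two representations gives
\begin{equation*}
	u-u_h=\frac{1}{2\pi\mathbf{i}}\int_{\Gamma_\theta}e^{zt}z^{-1}(w_{f_0}-w_{h,f_0})\,dz+\frac{1}{2\pi\mathbf{i}}\int_{\Gamma_\theta}e^{zt}z^{-1}(w_{f_t}-w_{h,f_t})\,dz.
\end{equation*}
The first inequality $\|\mathbf u-\mathbf u_h\|_{l^\infty}\le\|u-u_h\|_{L^\infty(\Omega)}$ is immediate from \eqref{eqdefuh}, because $u_h(x_i)=u_{h,i}$ and $u(x_i)=u_i$ at the nodes. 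So the whole task is to bound the two contour integrals above in $L^\infty(\Omega)$; for the $l^2$ statement one repeats everything with $\mathcal I_N$ and Lemma \ref{lemspaerrl2}.

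For the $f(0)$ term I would apply Lemma \ref{lemspaerr} with $v=f(0)$ and, crucially, $\gamma=0$, which is exactly what produces the exponent $\sigma+\tfrac12-\epsilon$ in the statement. This leaves the scalar integral $\int_{\Gamma_\theta}|e^{zt}||z|^{-1}\,|dz|$. Unlike the homogeneous case, where the weight $|z|^{\alpha-1}$ is integrable at the origin and yields $t^{-\alpha}$, here the factor $|z|^{-1}$ is \emph{not} integrable near $z=0$, so the bare two-ray contour is useless. The fix is to deform $\Gamma_\theta$ to a truncated Hankel contour $\Gamma_{\theta,1/t}$ consisting of the two rays $\{|z|\ge 1/t,\ |\arg z|=\theta\}$ together with the circular arc $\{|z|=1/t,\ |\arg z|\le\theta\}$; this is legitimate because the integrand is analytic in the sector apart from the simple pole of $z^{-1}$ at the origin. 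On the arc $|e^{zt}|\le e$, $|z|^{-1}=t$ and the arc length is $\sim 1/t$, while on the rays the substitution $s=rt$ gives $\int_{1/t}^\infty e^{rt\cos\theta}r^{-1}\,dr=\int_1^\infty e^{s\cos\theta}s^{-1}\,ds$; both pieces are bounded by a constant independent of $t$. Hence the $f(0)$ term is controlled by $Ch^{\min(\sigma+\frac12-\epsilon,2)}\|f(0)\|_{\hat H^\sigma(\Omega)}$.

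For the $f_t$ term I would exploit the convolution structure. Inserting $\tilde f_t(z)=\int_0^\infty e^{-zs}f_t(s)\,ds$ and interchanging the order of integration (justified by Fubini together with the exponential decay of $e^{zt}$ along $\Gamma_\theta$) turns the contour integral into $\int_0^t g(t-s)f_t(s)\,ds$, where $g(\tau)=\frac{1}{2\pi\mathbf{i}}\int_{\Gamma_\theta}e^{z\tau}z^{-1}(w_{f_t}-w_{h,f_t})\,dz$, the contributions from $s>t$ vanishing by analyticity (causality) in the right half of the sector. For each fixed $\tau=t-s>0$ I apply Lemma \ref{lemspaerr} with $v=f_t(s)$ and $\gamma=0$, and bound the scalar integral $\int_{\Gamma_{\theta,1/\tau}}|e^{z\tau}||z|^{-1}\,|dz|\le C$ by the same arc-plus-rays argument as above, uniformly in $\tau$. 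Integrating the resulting pointwise bound $Ch^{\min(\sigma+\frac12-\epsilon,2)}\|f_t(s)\|_{\hat H^\sigma(\Omega)}$ over $s\in[0,t]$ yields the $\int_0^t\|f_t(s)\|_{\hat H^\sigma(\Omega)}\,ds$ factor. Summing the two contributions gives the $l^\infty$ bound, and the $l^2$ bound follows verbatim using Lemma \ref{lemspaerrl2}.

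The main obstacle is exactly the non-integrability of $z^{-1}$ at the origin: the clean $t^{-\alpha}$ shortcut of Theorem \ref{thmerrorhom} is unavailable, and one must invest in the contour deformation to the truncated Hankel paths $\Gamma_{\theta,1/t}$ and $\Gamma_{\theta,1/\tau}$ to keep the scalar integrals finite and, more importantly, uniformly bounded in the time variable. The secondary technical point is making the convolution rewriting of the $f_t$ term rigorous — justifying the Fubini interchange and the vanishing of the $s>t$ part — but this is routine once the contour is chosen so that all integrals converge absolutely.
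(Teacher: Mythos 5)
Your proposal is correct and follows essentially the same route as the paper, whose entire proof of this theorem is the remark ``similar to the proof of Theorem \ref{thmerrorhom}'': split the representations \eqref{eqsolofrel} and \eqref{eqsemischl1Msol} into the $f(0)$ and $f_{t}$ contributions, apply Lemma \ref{lemspaerr} (resp.\ Lemma \ref{lemspaerrl2}) with $\gamma=0$ to get the rate $h^{\min(\sigma+\frac{1}{2}-\epsilon,2)}$, and bound the remaining scalar contour integrals. Your truncated-Hankel deformation and the convolution rewriting of the $\tilde{f}_{t}$ term supply exactly the details forced by the weight $z^{-1}$ (non-integrable at the origin, unlike $z^{\alpha-1}$ in the homogeneous case) that the paper leaves implicit, and your treatment of them is sound.
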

\begin{remark}
	In fact, if $u_{0}=0$ and $\int_{0}^{t}(t-s)^{-1+\epsilon\alpha}\|f(s)\|_{\hat{H}^{\sigma}(\Omega)}ds<\infty$ with $\sigma\in[0,\frac{3}{2}]$, the corresponding spatial error estimates can be written as
	\begin{equation*}
		\begin{aligned}
			\|\mathbf{u}-\mathbf{u}_{h}\|_{l^{\infty}}\leq& \|u-u_{h}\|_{L^{\infty}(\Omega)}
			\leq Ch^{\sigma+\frac{1}{2}-3\epsilon}\int_{0}^{t}(t-s)^{-1+\epsilon\alpha}\|f(s)\|_{\hat{H}^{\sigma}(\Omega)}ds
		\end{aligned}
	\end{equation*}
	and
	\begin{equation*}
		\begin{aligned}
			\|\mathbf{u}-\mathbf{u}_{h}\|_{l^{2}}
			\leq Ch^{\sigma+\frac{1}{2}-3\epsilon}\int_{0}^{t}(t-s)^{-1+\epsilon\alpha}\|f(s)\|_{\hat{H}^{\sigma}(\Omega)}ds,
		\end{aligned}
	\end{equation*}
	which can be got by Lemmas \ref{lemspaerr} and \ref{lemspaerrl2}. While if $u_{0}=0$ and $\int_{0}^{t}(t-s)^{-1+\gamma\alpha}\|f(s)\|_{\hat{H}^{\sigma}(\Omega)}ds<\infty$ with $\sigma>\frac{3}{2}$ and $\gamma=\min(\frac{2\sigma-3}{8},\frac{1}{8})>0$, the spatial error estimates become
	\begin{equation*}
		\begin{aligned}
			\|\mathbf{u}-\mathbf{u}_{h}\|_{l^{\infty}}\leq& \|u-u_{h}\|_{L^{\infty}(\Omega)}
			\leq Ch^{2}\int_{0}^{t}(t-s)^{-1+\gamma\alpha}\|f(s)\|_{\hat{H}^{\sigma}(\Omega)}ds
		\end{aligned}
	\end{equation*}
	and
	\begin{equation*}
		\begin{aligned}
			\|\mathbf{u}-\mathbf{u}_{h}\|_{l^{2}}
			\leq Ch^{2}\int_{0}^{t}(t-s)^{-1+\gamma\alpha}\|f(s)\|_{\hat{H}^{\sigma}(\Omega)}ds.
		\end{aligned}
	\end{equation*}
\end{remark}

\subsection{The modified central finite difference scheme and error analysis}
Here we first introduce the following projection, i.e., define the projection operator $P_{N}:L^{2}(\Omega)\rightarrow \mathbb{H}_{N}$ satisfying
\begin{equation*}
	(P_{N}u,v_{N})=(u,v_{N})\quad \forall v\in \mathbb{H}_{N},
\end{equation*}
where $\mathbb{H}_{N}={\rm span}\{\phi_{k}\}_{k=1}^{N-1}$. It is easy to see  that
\begin{equation*}
	P_{N}u=\sum_{k=1}^{N-1}(u,\phi_{k})\phi_{k}.
\end{equation*}

From the proof of Lemma \ref{lemspaerr}, it can be noted that the spatial convergence rates are mainly limited by the estimate of $(\phi_{i}(\eta_{h}(y)),v_{R}(\eta_{h}(y)))$ with $v_{R}=\sum_{k=N}^{\infty}(v,\phi_{k})\phi_{k}$. A direct idea about improving  the convergence rates is how to avoid these terms in error analysis. Based on this idea, a modified central finite difference scheme can be provided, i.e.,
\begin{equation}\label{eqsemischmod}
	\begin{aligned}
		&{}_{0}\partial^{\alpha}_{t}(u_{h,p,i}-u^{0}_{p,i})+N^{2}\sum_{j=1}^{N-1}w_{i,j}u_{h,p,j}=f_{p,i}.
	\end{aligned}
\end{equation}
Here $u_{h,p,i}$ is the numerical solution of $u$ at $x_{i}$, $u^{0}_{p,i}=(P_{N}u_{0})(x_{i})$, and $f_{p,i}=(P_{N}f)(x_{i})$.

Denote $\mathbf{u}_{h,p}=[u_{h,p,1},u_{h,p,2},\ldots,u_{h,p,N-1}]^{T}$, $\mathbf{u}^{0}_{p}=[u^{0}_{p,1},u^{0}_{p,2},\ldots,u^{0}_{p,N-1}]^{T}=\mathcal{I}_{N}(P_{N}u_{0})=\mathcal{I}_{N}u_{0,N}$, and $\mathbf{f}_{p}=[f_{p,1},f_{p,2},\ldots,f_{p,N-1}]^{T}=\mathcal{I}_{N}(P_{N}f)=\mathcal{I}_{N}f_{N}$. Thus the solution of Eq. \eqref{eqsemischmod} is
\begin{equation}\label{eqsemischl1Msol02}
	\begin{aligned}
		\mathbf{u}_{h,p}(t)=\frac{1}{2\pi\mathbf{i}}\int_{\Gamma_{\theta}}e^{zt}z^{\alpha-1}(z^{\alpha}+\mathbf{A})^{-1}\mathbf{u}^{0}_{p}dz+\frac{1}{2\pi\mathbf{i}}\int_{\Gamma_{\theta}}e^{zt}(z^{\alpha}+\mathbf{A})^{-1}\tilde{\mathbf{f}}_{p}dz.
	\end{aligned}
\end{equation}
Similarly, with $u_{h,p,0}=u_{h,p,N}=0$, introduce
\begin{equation}\label{eqdefuhp}
	u_{h,p}(x,t)=u_{h,p,i}+(Nx-i)(u_{h,p,i+1}-u_{h,p,i})\qquad {\rm} x\in[x_{i},x_{i+1}),~i=0,1,\ldots,N-1.
\end{equation}
Then $u_{h,p}$ can be represented as
\begin{equation}\label{eqsemischl2Msol}
	\begin{aligned}
		u_{h,p}(x,t)=&\frac{1}{2\pi\mathbf{i}}\int_{\Gamma_{\theta}}e^{zt}z^{\alpha-1}\int_{0}^{1}\tilde{E}_{h}(z,x,y)u_{0,N}(\eta_{h}(y))dydz\\
		&+\frac{1}{2\pi\mathbf{i}}\int_{\Gamma_{\theta}}e^{zt}\int_{0}^{1}\tilde{E}_{h}(z,x,y)\tilde{f}_{N}(\eta_{h}(y))dydz\\
		=&\frac{1}{2\pi\mathbf{i}}\int_{\Gamma_{\theta}}e^{zt}z^{\alpha-1}\int_{0}^{1}\tilde{E}_{h}(z,x,y)u_{0,N}(\eta_{h}(y))dydz\\
		&+\frac{1}{2\pi\mathbf{i}}\int_{\Gamma_{\theta}}e^{zt}z^{-1}\int_{0}^{1}\tilde{E}_{h}(z,x,y)\tilde{f}_{N,t}(\eta_{h}(y))dydz\\
		&+\frac{1}{2\pi\mathbf{i}}\int_{\Gamma_{\theta}}e^{zt}z^{-1}\int_{0}^{1}\tilde{E}_{h}(z,x,y)f_{N}(\eta_{h}(y),0)dydz
	\end{aligned}
\end{equation}
with $f_{N,t}$ being the first derivative of $f_{N}$ about $t$.

Following the proofs of Lemmas \ref{lemspaerr}, \ref{lemspaerrl2}, and Remark \ref{Reignore3}, one can easily get the following lemma.
\begin{lemma}\label{lemspaerr2}
	Let $w(x)=\int_{0}^{1}\tilde{E}(z,x,y)v(y)dy$ and $w_{h}(x)=\int_{0}^{1}\tilde{E}_{h}(z,x,y)v_{N}(\eta_{h}(y))dy$ with $z\in\Sigma_{\theta}$ and $v_{N}=P_{N}v$. If $v\in \hat{H}^{\sigma}(\Omega)$ with $\sigma\geq 0$, then it holds
	\begin{equation*}
		\|\mathcal{I}_{N}w-\mathcal{I}_{N}w_{h}\|_{l^{\infty}}\leq Ch^{\max(\sigma+\frac{3}{2}-2\gamma,2)}\ell_{h}^{\chi_{0}(\frac{1}{4 }-\frac{\sigma}{2}+\gamma)}|z|^{-\gamma\alpha}\|v\|_{\hat{H}^{\sigma}(\Omega)}
	\end{equation*}
	and
	\begin{equation*}
		\|\mathcal{I}_{N}w-\mathcal{I}_{N}w_{h}\|_{l^{2}}\leq Ch^{2-\max(2\gamma-\sigma,0)}|z|^{-\gamma\alpha}\|v\|_{\hat{H}^{\sigma}(\Omega)}
	\end{equation*}
	with $\gamma\in[0,\frac{1}{4})$, $\ell_{h}=|\ln(h)|^{\frac{1}{2}}$, and $\chi_{0}(x)$ is the characteristic function on $x=0$.
\end{lemma}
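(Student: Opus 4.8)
The plan is to mirror, almost verbatim, the four-term decomposition used in the proofs of Lemmas~\ref{lemspaerr} and~\ref{lemspaerrl2}, now with the modified data $v_N=P_Nv$ fed into $w_h$. Sampling $w-w_h$ by $\mathcal{I}_N$ and inserting the discrete eigenpairs $\{\lambda_{h,i},\boldsymbol{\varphi}_{h,i}\}$, I would split the difference into the high-frequency tail term
$$\mathrm{I}=\mathcal{I}_N\sum_{i=N}^{\infty}(z^{\alpha}+\lambda_{i})^{-1}\phi_{i}(x)(\phi_{i}(y),v(y)),$$
the eigenvalue-perturbation term $\mathrm{II}$ (carrying the factor $(z^{\alpha}+\lambda_i)^{-1}-(z^{\alpha}+\lambda_{h,i})^{-1}$), the eigenfunction-interpolation term $\mathrm{III}$ (carrying $\phi_i(x)-\phi_{h,i}(x)$), and the quadrature term
$$\mathrm{IV}=\mathcal{I}_N\sum_{i=1}^{N-1}(z^{\alpha}+\lambda_{h,i})^{-1}\phi_{h,i}(x)\bigl((\phi_{i}(y),v(y))-(\phi_{i}(\eta_{h}(y)),v_{N}(\eta_{h}(y)))\bigr).$$
The task then reduces to estimating these four pieces in $l^2$ and $l^\infty$.

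First I would dispose of $\mathrm{I}$, $\mathrm{II}$, and $\mathrm{III}$ by quoting the computations already performed. Term $\mathrm{I}$ does not see the projection at all, so the bounds from Lemmas~\ref{lemspaerr} and~\ref{lemspaerrl2} apply unchanged: using $|\lambda_i^{1-\gamma}(z^\alpha+\lambda_i)^{-1}|\le C|z|^{-\gamma\alpha}$ together with $v\in\hat H^\sigma(\Omega)$ and $\lambda_N\sim h^{-2}$, it contributes $h^{\sigma+\frac32-2\gamma}$ in $l^\infty$ and (after squaring) $h^{\sigma+2-2\gamma}$ in $l^2$. Term $\mathrm{II}$, via the identity $(z^\alpha+\lambda_i)^{-1}-(z^\alpha+\lambda_{h,i})^{-1}=K_i\lambda_{h,i}(z^\alpha+\lambda_i)^{-1}(z^\alpha+\lambda_{h,i})^{-1}$ and the bound $|K_i|\le C(i\pi/2N)^2$ established in Lemma~\ref{lemspaerr}, produces the capped factor $h^{\min(\sigma+\frac32-2\gamma,2)}$ (respectively $h^{2-\max(2\gamma-\sigma,0)}$ in $l^2$) with the logarithmic correction $\ell_h^{\chi_0(\frac14-\frac\sigma2+\gamma)}$ at the borderline exponent. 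Since both output norms here are the discrete ones sampled at the nodes $\{x_j\}$, Remark~\ref{Reignore3} gives $\mathrm{III}=0$ immediately, because $\phi_{h,i}$ is the piecewise-linear interpolant of $\phi_i$ and coincides with it at every grid point.

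The decisive and only genuinely new step is to show that $\mathrm{IV}$ vanishes identically; this is precisely where the residual tail $v_R$, which had capped the rate at $\sigma+\frac12$ in the two earlier lemmas, is eliminated by the projection. The hard part is verifying the exact cancellation $(\phi_i(y),v(y))-(\phi_i(\eta_h(y)),v_N(\eta_h(y)))=0$ for each $1\le i\le N-1$, and I would argue it in two moves. Because $v_R$ lies in the span of $\{\phi_k\}_{k\ge N}$, orthogonality gives $(\phi_i,v)=(\phi_i,v_N)$ for $i\le N-1$; and because $\eta_h$ collapses $y$ onto the left node, $(\phi_i(\eta_h(\cdot)),v_N(\eta_h(\cdot)))=\frac1N\sum_{j=1}^{N-1}\phi_i(x_j)v_N(x_j)$ is exactly the discrete inner product $\langle\mathcal{I}_N\phi_i,\mathcal{I}_Nv_N\rangle_{l^2}$. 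The crux is the discrete orthogonality underpinning the eigenvectors $\boldsymbol{\varphi}_{h,k}$, namely $\frac1N\sum_{j=1}^{N-1}\phi_i(x_j)\phi_k(x_j)=\delta_{ik}$ for $i,k\le N-1$, which forces this discrete inner product to agree with the continuous one and returns $(\phi_i,v_N)=(\phi_i,v)$. Hence $\mathrm{IV}=0$; this is the same zero-term identity already noted inside the proof of Lemma~\ref{lemspaerr}, now covering the whole of $\mathrm{IV}$ precisely because the modified scheme supplies $v_N$ in place of $v$.

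Collecting the two surviving contributions $\mathrm{I}$ and $\mathrm{II}$ and using $\mathrm{III}=\mathrm{IV}=0$, I obtain the $l^\infty$ bound $Ch^{\min(\sigma+\frac32-2\gamma,2)}\ell_h^{\chi_0(\frac14-\frac\sigma2+\gamma)}|z|^{-\gamma\alpha}\|v\|_{\hat H^\sigma(\Omega)}$ ($\mathrm{I}$ dominant, $\mathrm{II}$ carrying the cap and the $\ell_h$ factor), while in $l^2$ the squares of $\mathrm{I}$ and $\mathrm{II}$ combine to $Ch^{2-\max(2\gamma-\sigma,0)}|z|^{-\gamma\alpha}\|v\|_{\hat H^\sigma(\Omega)}$, as claimed. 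The restriction $\gamma\in[0,\tfrac14)$ is inherited directly from the resolvent and interpolation estimates invoked for $\mathrm{I}$ and $\mathrm{II}$ in the preceding lemmas.
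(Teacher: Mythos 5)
Your proposal is correct and takes essentially the same route the paper intends: the paper gives no separate proof for this lemma, merely asserting that it follows from the proofs of Lemmas~\ref{lemspaerr}, \ref{lemspaerrl2} and Remark~\ref{Reignore3}, and your write-up supplies exactly those steps, including the one genuinely new ingredient --- the discrete sine orthogonality $\frac{1}{N}\sum_{j=1}^{N-1}\phi_{i}(x_{j})\phi_{k}(x_{j})=\delta_{ik}$ forcing the quadrature term $\mathrm{IV}$ to vanish once $v_{N}=P_{N}v$ replaces $v$, which is precisely the zero-term identity already used inside the proof of Lemma~\ref{lemspaerr}. Note also that your exponent $\min\bigl(\sigma+\frac{3}{2}-2\gamma,2\bigr)$ is the correct reading of the lemma's evidently typographical ``$\max$'' (compare Theorem~\ref{thmerrorhom2}, which states $h^{\min(\sigma+\frac{3}{2},2)}$), and your only inaccuracy --- attributing the restriction $\gamma\in[0,\frac{1}{4})$ to the estimates of $\mathrm{I}$ and $\mathrm{II}$, when in the paper it actually arose from the now-absent $v_{R}$ duality pairing --- is harmless since retaining the hypothesis only weakens the statement.
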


Similarly, one can obtain the error estimates of the modified scheme \eqref{eqsemischmod} for solving the homogeneous and inhomogeneous problems \eqref{eqretosol}, respectively.
\begin{theorem}\label{thmerrorhom2}
	Let $u$ and $\mathbf{u}_{h,p}$, defined in \eqref{eqdefbu} and \eqref{eqdefuhp}, be the solutions of Eqs. \eqref{eqretosol} and \eqref{eqsemischmod}, respectively. %And $\mathbf{u}$ and $u_{h,p}$ are defined in \eqref{eqdefbu} and \eqref{eqdefuhp}, respectively.
	Let $u_{0}\in \hat{H}^{\sigma}(\Omega)$ and $f=0$ with $\sigma\geq 0$. Then one has
	\begin{equation*}
		\begin{aligned}
			\|\mathbf{u}-\mathbf{u}_{h,p}\|_{l^{\infty}}\leq Ch^{\min(\sigma+\frac{3}{2},2)}\ell_{h}^{\chi_{0}(\frac{1}{4 }-\frac{\sigma}{2})}t^{-\alpha}\|u_{0}\|_{\hat{H}^{\sigma}(\Omega)}
		\end{aligned}
	\end{equation*}
	and
	\begin{equation*}
		\|\mathbf{u}-\mathbf{u}_{h,p}\|_{l^{2}}\leq Ch^{2}t^{-\alpha}\|u_{0}\|_{\hat{H}^{\sigma}(\Omega)}.
	\end{equation*}
	
\end{theorem}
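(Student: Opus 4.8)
The plan is to mirror the proof of Theorem \ref{thmerrorhom}, replacing the pair of estimates in Lemmas \ref{lemspaerr} and \ref{lemspaerrl2} by the single sharpened estimate of Lemma \ref{lemspaerr2}. Setting $f=0$ in the solution representations \eqref{eqsolofrel} and \eqref{eqsemischl2Msol}, and recalling that $\mathbf{u}=\mathcal{I}_Nu$ and $\mathbf{u}_{h,p}=\mathcal{I}_Nu_{h,p}$, the grid-point error is
\begin{equation*}
	\mathbf{u}-\mathbf{u}_{h,p}=\frac{1}{2\pi\mathbf{i}}\int_{\Gamma_\theta}e^{zt}z^{\alpha-1}\,\mathcal{I}_N\Bigl(\int_0^1\tilde{E}(z,\cdot,y)u_0(y)\,dy-\int_0^1\tilde{E}_h(z,\cdot,y)u_{0,N}(\eta_h(y))\,dy\Bigr)dz,
\end{equation*}
with $u_{0,N}=P_Nu_0$. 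The bracketed integrand is exactly $w-w_h$ with $v=u_0$ in the notation of Lemma \ref{lemspaerr2}, so the difficult spatial work has already been carried out there.

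First I would move the discrete norm inside the contour integral by the triangle inequality, obtaining for either the $l^2$- or the $l^\infty$-norm (denoted $\|\cdot\|$)
\begin{equation*}
	\|\mathbf{u}-\mathbf{u}_{h,p}\|\leq C\int_{\Gamma_\theta}|e^{zt}|\,|z|^{\alpha-1}\,\|\mathcal{I}_Nw-\mathcal{I}_Nw_h\|\,|dz|.
\end{equation*}
The decisive choice is to apply Lemma \ref{lemspaerr2} with $\gamma=0$, which is admissible since $\gamma\in[0,\tfrac14)$ and which collapses the weight $|z|^{-\gamma\alpha}$ to $1$, decoupling the spatial factor from $z$. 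In the $l^2$-norm the lemma then gives the spatial factor $h^{2-\max(-\sigma,0)}=h^2$ because $\sigma\geq 0$, whereas in the $l^\infty$-norm it gives $h^{\min(\sigma+\frac{3}{2},2)}\ell_h^{\chi_0(\frac{1}{4}-\frac{\sigma}{2})}$, precisely the spatial rates in the statement.

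What remains is the purely temporal integral $\int_{\Gamma_\theta}|e^{zt}|\,|z|^{\alpha-1}\,|dz|$. Parametrizing $\Gamma_\theta$ by $z=re^{\pm i\theta}$ with $\cos\theta<0$ and substituting $s=rt|\cos\theta|$ reduces it to a Gamma integral bounded by $C\Gamma(\alpha)(t|\cos\theta|)^{-\alpha}\leq Ct^{-\alpha}$; assembling the three pieces yields both claimed bounds. I do not expect a genuine obstacle, as this is a routine specialization of the homogeneous argument of Theorem \ref{thmerrorhom}. The only point deserving care is that taking $\gamma=0$ is what simultaneously produces the sharp temporal singularity $t^{-\alpha}$ rather than the weaker $t^{-\alpha(1-\gamma)}$ a positive $\gamma$ would generate, while still delivering the full spatial order; any $\gamma>0$ would trade spatial smoothness for a milder time singularity and is unnecessary here since $u_0\in\hat H^\sigma(\Omega)$ already supplies the needed regularity.
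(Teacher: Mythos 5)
Your proposal is correct and follows essentially the same route as the paper, which proves Theorem \ref{thmerrorhom2} exactly as Theorem \ref{thmerrorhom} but with Lemma \ref{lemspaerr2} (applied with $\gamma=0$, so the factor $|z|^{-\gamma\alpha}$ disappears) in place of Lemmas \ref{lemspaerr} and \ref{lemspaerrl2}, followed by the same contour bound $\int_{\Gamma_\theta}|e^{zt}||z|^{\alpha-1}|dz|\leq Ct^{-\alpha}$. Note also that you silently (and correctly) read the exponent $h^{\max(\sigma+\frac{3}{2}-2\gamma,2)}$ in Lemma \ref{lemspaerr2} as $h^{\min(\sigma+\frac{3}{2}-2\gamma,2)}$, which is evidently a typo in the paper since the theorem's stated rate is $\min(\sigma+\frac{3}{2},2)$.
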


\begin{theorem}\label{thmerrorinhom2}
	Let $u$ and $\mathbf{u}_{h,p}$, defined in \eqref{eqdefbu} and \eqref{eqdefuhp}, be the solutions of Eqs. \eqref{eqretosol} and \eqref{eqsemischmod}, respectively. %And $\mathbf{u}$ and $u_{h,p}$ are defined in \eqref{eqdefbu} and \eqref{eqdefuhp}, respectively.
	Assume $u_{0}=0$, $f(0)\in \hat{H}^{\sigma}(\Omega)$, and $\int_{0}^{t}\|f_{t}(s)\|_{\hat{H}^{\sigma}(\Omega)}ds<\infty$ with $\sigma\geq 0$. Then one has
	\begin{equation*}
		\begin{aligned}
			\|\mathbf{u}-\mathbf{u}_{h,p}\|_{l^{\infty}}
			\leq Ch^{\min(\sigma+\frac{3}{2},2)}\ell_{h}^{\chi_{0}(\frac{1}{4 }-\frac{\sigma}{2})}\left (\|f(0)\|_{\hat{H}^{\sigma}(\Omega)}+\int_{0}^{t}\|f_{t}(s)\|_{\hat{H}^{\sigma}(\Omega)}ds\right )
		\end{aligned}
	\end{equation*}
	and
	\begin{equation*}
		\|\mathbf{u}-\mathbf{u}_{h,p}\|_{l^{2}}\leq Ch^{2}\left (\|f(0)\|_{\hat{H}^{\sigma}(\Omega)}+\int_{0}^{t}\|f_{t}(s)\|_{\hat{H}^{\sigma}(\Omega)}ds\right ).
	\end{equation*}
\end{theorem}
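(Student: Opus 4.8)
The plan is to repeat the contour-integral argument used for Theorems \ref{thmerrorhom} and \ref{thmerrorimhom}, but now driving it with the sharpened spatial estimate of Lemma \ref{lemspaerr2}; this sharper rate is available precisely because the modified scheme \eqref{eqsemischmod} discretizes the projected data $P_N u_0$ and $P_N f$, which removes the $v_R$-tail term that capped the rate in Lemma \ref{lemspaerr}. First I would subtract the representation \eqref{eqsemischl2Msol} of $u_{h,p}$ from \eqref{eqsolofrel} with $u_0=0$, evaluate at the grid (so that $\mathbf u-\mathbf u_{h,p}=\mathcal I_N(u-u_{h,p})$), and write the error as two Bromwich integrals carrying the weight $e^{zt}z^{-1}$: one applied to the data pair $(f(\cdot,0),\,f_N(\eta_h(\cdot),0))$ and one to $(\tilde f_t(\cdot),\,\tilde f_{N,t}(\eta_h(\cdot)))$.

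For the $f(0)$ contribution I would pull the $l^2$- (resp.\ $l^\infty$-) norm inside and apply Lemma \ref{lemspaerr2} to $\mathcal I_N(w(z)-w_h(z))$ pointwise in $z$ with $v=f(0)$, producing the spatial factor together with $|z|^{-\gamma\alpha}$. It then remains to control $\int_{\Gamma_\theta}|e^{zt}||z|^{-1-\gamma\alpha}\,|dz|$; deforming $\Gamma_\theta$ to the usual $t$-dependent Hankel contour (two rays plus a circular arc of radius $1/t$) makes this $\mathcal O(t^{\gamma\alpha})$, hence harmless for bounded $t$ and $\gamma\in[0,\tfrac14)$. Choosing $\gamma=0$ already gives a bounded integral, so this term is controlled by $\|f(0)\|_{\hat{H}^{\sigma}(\Omega)}$ with the rate read off from Lemma \ref{lemspaerr2}.

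The step I expect to be the main obstacle is the $\tilde f_t$ contribution, because on $\Gamma_\theta$ one has $\mathrm{Re}\,z<0$, so $\|\tilde f_t(z)\|_{\hat{H}^{\sigma}(\Omega)}$ cannot be bounded by $\int_0^\infty e^{-s\,\mathrm{Re}\,z}\|f_t(s)\|\,ds$ and one must not integrate against it directly. Instead I would keep the integrand in product form and recognize it as an operator-valued convolution: writing $G(z)\colon v\mapsto \int_0^1\bigl(\tilde E(z,x,y)v(y)-\tilde E_h(z,x,y)(P_Nv)(\eta_h(y))\bigr)dy$, Lemma \ref{lemspaerr2} gives $\|\mathcal I_N G(z)\|_{\hat{H}^{\sigma}(\Omega)\to l^p}\le Ch^{\rho}|z|^{-\gamma\alpha}$, where $\rho$ denotes the corresponding spatial exponent of that lemma; the inverse Laplace transform $\mathcal G(\tau)$ of $z^{-1}G(z)$ then satisfies $\|\mathcal I_N\mathcal G(\tau)\|\le Ch^{\rho}\int_{\Gamma_\theta}|e^{z\tau}||z|^{-1-\gamma\alpha}\,|dz|\le Ch^{\rho}\tau^{\gamma\alpha}$, uniformly bounded for $\tau\in(0,t]$. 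By the convolution theorem the $\tilde f_t$ term equals $\int_0^t\mathcal G(t-s)f_t(s)\,ds$, so the triangle inequality delivers the factor $\int_0^t\|f_t(s)\|_{\hat{H}^{\sigma}(\Omega)}\,ds$.

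Finally I would collect both contributions and set $\gamma=0$: Lemma \ref{lemspaerr2} then contributes $h^{\min(\sigma+\frac{3}{2},2)}\ell_h^{\chi_0(\frac{1}{4}-\frac{\sigma}{2})}$ in $l^\infty$ and, since $2-\max(2\gamma-\sigma,0)=2$ for every $\sigma\ge0$ when $\gamma=0$, the clean exponent $2$ in $l^2$. Combined with the bounded contour integrals and the projection bound $\|P_Nf(0)\|_{\hat{H}^{\sigma}(\Omega)}\le\|f(0)\|_{\hat{H}^{\sigma}(\Omega)}$, this yields exactly the two asserted estimates.
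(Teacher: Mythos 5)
Your proposal is correct and takes essentially the same route the paper intends: the paper establishes Theorem \ref{thmerrorinhom2} only by remarking that it follows as in Theorems \ref{thmerrorhom} and \ref{thmerrorimhom} with Lemma \ref{lemspaerr} replaced by Lemma \ref{lemspaerr2}, which is precisely your plan of subtracting \eqref{eqsemischl2Msol} from \eqref{eqsolofrel}, applying Lemma \ref{lemspaerr2} with $\gamma=0$ pointwise in $z$, and bounding the resulting contour integrals. Your explicit operator-valued convolution treatment of the $\tilde f_t$ term (avoiding a direct bound of $\|\tilde f_t(z)\|_{\hat H^{\sigma}(\Omega)}$ on $\Gamma_{\theta}$, where $\mathrm{Re}\,z<0$) is the correct reading of the step the paper leaves implicit under the word ``similarly'', and it reproduces exactly the asserted factor $\|f(0)\|_{\hat H^{\sigma}(\Omega)}+\int_{0}^{t}\|f_{t}(s)\|_{\hat H^{\sigma}(\Omega)}ds$.
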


\begin{remark}
	Similarly, if $u_{0}=0$ and $\int_{0}^{t}(t-s)^{-1+\epsilon\alpha}\|f(s)\|_{\hat{H}^{\sigma}(\Omega)}ds<\infty$ with $\sigma\in[0,\frac{1}{2}]$, one has
	\begin{equation*}
		\begin{aligned}
			\|\mathbf{u}-\mathbf{u}_{h,p}\|_{l^{\infty}}
			\leq Ch^{\sigma+\frac{3}{2}-2\epsilon}\int_{0}^{t}(t-s)^{-1+\epsilon\alpha}\|f(s)\|_{\hat{H}^{\sigma}(\Omega)}ds
		\end{aligned}
	\end{equation*}
	and
	\begin{equation*}
		\begin{aligned}
			\|\mathbf{u}-\mathbf{u}_{h,p}\|_{l^{2}}
			\leq Ch^{2-2\epsilon}\int_{0}^{t}(t-s)^{-1+\epsilon\alpha}\|f(s)\|_{\hat{H}^{\sigma}(\Omega)}ds.
		\end{aligned}
	\end{equation*}
	While if $u_{0}=0$ and $\int_{0}^{t}(t-s)^{-1+\gamma\alpha}\|f(s)\|_{\hat{H}^{\sigma}(\Omega)}ds<\infty$ with $\sigma>\frac{1}{2}$ and $\gamma=\min(\frac{2\sigma-1}{8},\frac{1}{8})>0$, the spatial error estimates become
	\begin{equation*}
		\begin{aligned}
			\|\mathbf{u}-\mathbf{u}_{h,p}\|_{l^{\infty}}
			\leq Ch^{2}\int_{0}^{t}(t-s)^{-1+\gamma\alpha}\|f(s)\|_{\hat{H}^{\sigma}(\Omega)}ds
		\end{aligned}
	\end{equation*}
	and
	\begin{equation*}
		\begin{aligned}
			\|\mathbf{u}-\mathbf{u}_{h,p}\|_{l^{2}}
			\leq Ch^{2}\int_{0}^{t}(t-s)^{-1+\gamma\alpha}\|f(s)\|_{\hat{H}^{\sigma}(\Omega)}ds.
		\end{aligned}
	\end{equation*}
\end{remark}
\begin{remark}\label{Rehighdimen}
	For d-dimensional cube domain ($d\geq 1$), similar to the above discussions, we have the following error estimates when using the modified scheme to solve Eq. \eqref{eqretosol}:
	\begin{enumerate}
		\item if $u_{0}\in \hat{H}^{\sigma}(\Omega)$ and $f=0$ with $\sigma\geq 0$, then
		\begin{equation*}
			\|\mathbf{u}-\mathbf{u}_{h,p}\|_{l^{2}}\leq Ch^{2}t^{-\alpha}\|u_{0}\|_{\hat{H}^{\sigma}(\Omega)},
		\end{equation*}
		where $\mathbf{u}$ and $\mathbf{u}_{h,p}$ are the exact and numerical solutions;
		\item if $u_{0}\in \hat{H}^{\sigma}(\Omega)$ and $f=0$ with $\sigma\geq\max(0,\frac{d}{2}-2)$, then
		\begin{equation*}
			\|\mathbf{u}-\mathbf{u}_{h,p}\|_{l^{\infty}}\leq Ch^{\min(\sigma+2-\frac{d}{2},2)}\ell_{h}^{\chi_{0}(\frac{d}{4 }-\frac{\sigma}{2})}t^{-\alpha}\|u_{0}\|_{\hat{H}^{\sigma}(\Omega)},
		\end{equation*}
		where $\mathbf{u}$ and $\mathbf{u}_{h,p}$ are the exact and numerical solutions;
		\item if $u_{0}=0$, $f(0)\in \hat{H}^{\sigma}(\Omega)$, and $\int_{0}^{t}\|f_{t}(s)\|_{\hat{H}^{\sigma}(\Omega)}ds<\infty$ with $\sigma\geq 0$, then
		\begin{equation*}
			\|\mathbf{u}-\mathbf{u}_{h,p}\|_{l^{2}}\leq Ch^{2}\left (\|f(0)\|_{\hat{H}^{\sigma}(\Omega)}+\int_{0}^{t}\|f_{t}(s)\|_{\hat{H}^{\sigma}(\Omega)}ds\right ),
		\end{equation*}
		where $\mathbf{u}$ and $\mathbf{u}_{h,p}$ are the exact and numerical solutions;
		\item if $u_{0}=0$, $f(0)\in \hat{H}^{\sigma}(\Omega)$, and $\int_{0}^{t}\|f_{t}(s)\|_{\hat{H}^{\sigma}(\Omega)}ds<\infty$ with $\sigma\geq\max(0,\frac{d}{2}-2)$, then
		\begin{equation*}
			\|\mathbf{u}-\mathbf{u}_{h,p}\|_{l^{\infty}}\leq Ch^{\min(\sigma+2-\frac{d}{2},2)}\ell_{h}^{\chi_{0}(\frac{d}{4 }-\frac{\sigma}{2})}\left (\|f(0)\|_{\hat{H}^{\sigma}(\Omega)}+\int_{0}^{t}\|f_{t}(s)\|_{\hat{H}^{\sigma}(\Omega)}ds\right ),
		\end{equation*}
		where $\mathbf{u}$ and $\mathbf{u}_{h,p}$ are the exact and numerical solutions.
	\end{enumerate}
\end{remark}
\section{Fully discrete scheme and error analysis}\label{sec3}
In this section, following the idea of the averaged scheme built in \cite{Ji.2020ASCTSfTMBEGM,Zheng.2022AaLcdmftmdewwss,Zhou.2022OCRiTDT$L$LaAS}, we provide two kinds of the averaged schemes for Eq. \eqref{eqsemischl1}, i.e., averaged $L1$ ($\overline{L1}$) scheme and averaged second order backward difference ($\overline{SBD}$) scheme, and then the corresponding error analyses are also proposed. Compared with the existing discussions, the analyses presented in this paper are applicable to all $\alpha\in(0,1)$.

Let the time step $\tau=T/L$ with $L\in \mathbb{N}^{*}$ and $t_{i}=i\tau$, $i=0,1,2,\ldots,L$. Integrating Eq. \eqref{eqsemischl1M} from $t_{n-1}$ to $t_{n}$, one can get
\begin{equation*}
	\begin{aligned}
		&\frac{1}{\Gamma(1-\alpha)\tau}\int_{t_{n-1}}^{t_{n}}\left(\int_{t_{n-1}}^{t}(t-s)^{-\alpha}\partial_{s} (\mathbf{u}_{h}-\mathbf{u}^{0})ds+\sum_{k=1}^{n-1}\int_{t_{k-1}}^{t_{k}}(t-s)^{-\alpha}\partial_{s}(\mathbf{u}_{h}-\mathbf{u}^{0})ds\right)dt\\
		&\qquad\qquad\qquad\qquad\qquad\qquad\qquad+\frac{1}{\tau}\int_{t_{n-1}}^{t_{n}}\mathbf{A}\mathbf{u}_{h}(s)ds=\frac{1}{\tau}\int_{t_{n-1}}^{t_{n}}\mathbf{f}(s)ds.
	\end{aligned}
\end{equation*}
Exchanging the order of integration leads to
\begin{equation*}
	\begin{aligned}
		&\frac{1}{\Gamma(1-\alpha)\tau}\left(\int_{t_{n-1}}^{t_{n}}\int_{s}^{t_{n}}(t-s)^{-\alpha}dt\partial_{s} (\mathbf{u}_{h}-\mathbf{u}^{0})ds+\sum_{k=1}^{n-1}\int_{t_{k-1}}^{t_{k}}\int_{t_{n-1}}^{t_{n}}(t-s)^{-\alpha}dt\partial_{s}(\mathbf{u}_{h}-\mathbf{u}^{0})ds\right)\\
		&\qquad\qquad\qquad\qquad\qquad\qquad+\frac{1}{\tau}\int_{t_{n-1}}^{t_{n}}\mathbf{A}\mathbf{u}_{h}(s)ds=\frac{1}{\tau}\int_{t_{n-1}}^{t_{n}}\mathbf{f}(s)ds.
	\end{aligned}
\end{equation*}
Simple calculations result in
\begin{equation}\label{eqeqsemisch}
	\begin{aligned}
		&\frac{1}{\Gamma(2-\alpha)\tau}\left(\int_{0}^{t_{n}}(t_{n}-s)^{1-\alpha}\partial_{s} (\mathbf{u}_{h}-\mathbf{u}^{0})ds-\int_{0}^{t_{n-1}}(t_{n-1}-s)^{1-\alpha}\partial_{s}(\mathbf{u}_{h}-\mathbf{u}^{0})ds\right)\\
		&\qquad\qquad\qquad\qquad\qquad\qquad+\frac{1}{\tau}\int_{t_{n-1}}^{t_{n}}\mathbf{A}\mathbf{u}_{h}(s)ds=\frac{1}{\tau}\int_{t_{n-1}}^{t_{n}}\mathbf{f}(s)ds.
	\end{aligned}
\end{equation}
Introducing $\mathbf{u}^{-1}_{i,h}=\mathbf{u}^{0}$ ($i=1,2$), using $\frac{\tau\mathbf{A}(3\mathbf{u}_{i,h}^{n}+\mathbf{u}_{i,h}^{n-2})}{4}$ ($i=1,2$) and $\frac{\tau(\mathbf{f}^{n}+\mathbf{f}^{n-1})}{2}$ to approximate $\int_{t_{n-1}}^{t_{n}}\mathbf{A}\mathbf{u}dt$ and $\int_{t_{n-1}}^{t_{n}}\mathbf{f}(s)dt$,  and applying $L1$ and $SBD$ methods introduced in \cite{Lin.2007Fdafttde,Lubich.1996Ndeefaoaeewapmt} to discretize Eq. \eqref{eqeqsemisch}, respectively, then $\overline{L1}$ and $\overline{SBD}$ fully discrete schemes are as follows:
\begin{equation}\label{eqfullschl1}
	\frac{\sum_{k=0}^{n-1}d^{(\alpha)}_{k}(\mathbf{u}_{1,h}^{n-k}-\mathbf{u}^{0})-\sum_{k=0}^{n-2}d^{(\alpha)}_{k}(\mathbf{u}_{1,h}^{n-1-k}-\mathbf{u}^{0})}{\tau}+\frac{\mathbf{A}(3\mathbf{u}_{1,h}^{n}+\mathbf{u}_{1,h}^{n-2})}{4}=\frac{\mathbf{f}^{n-1}+\mathbf{f}^{n}}{2}
\end{equation}
and
\begin{equation}\label{eqfullschsbd}
	\frac{\sum_{k=0}^{n-1}g^{(\alpha)}_{k}(\mathbf{u}_{2,h}^{n-k}-\mathbf{u}^{0})-\sum_{k=0}^{n-2}g^{(\alpha)}_{k}(\mathbf{u}_{2,h}^{n-1-k}-\mathbf{u}^{0})}{\tau}+\frac{\mathbf{A}(3\mathbf{u}_{2,h}^{n}+\mathbf{u}_{2,h}^{n-2})}{4}=\frac{\mathbf{f}^{n-1}+\mathbf{f}^{n}}{2},
\end{equation}
where $\mathbf{u}_{1,h}^{n}$ and $\mathbf{u}_{2,h}^{n}$ are the numerical solutions at $t_{n}$ and $\mathbf{f}^{n}=\mathbf{f}(t_{n})$.
Here $d^{(\alpha)}_{k}$ and $g^{(\alpha)}_{k}$ are defined by
\begin{equation*}
	\begin{split}
		d^{(\alpha)}_{k}=&\left\{\begin{array}{ll}
			b^{(\alpha)}_{0} &  k=0,\\
			b_{k}^{(\alpha)}-b_{k-1}^{(\alpha)} & k>0,
		\end{array}\right.\\
		b^{(\alpha)}_{k}=&\tau^{1-\alpha}\frac{(k+1)^{2-\alpha}-k^{2-\alpha}}{\Gamma(3-\alpha)},
	\end{split}
\end{equation*}
and
\begin{equation*}
	\sum_{k=0}^{\infty}g^{(\alpha)}_{k}\xi^{k}=(\delta_{\tau,2}(\xi))^{\alpha-1}=\left(\frac{(1-\xi)+(1-\xi)^{2}/2}{\tau}\right)^{\alpha-1}.
\end{equation*}
Before providing the expression of the solution to Eq. \eqref{eqfullschl1}, we introduce $Li_{p}(z)$ as
\begin{equation*}
	Li_{p}(z)=\sum_{j=1}^{\infty}\frac{z^{j}}{j^{p}}.
\end{equation*}
According to \cite{Flajolet.1999SaaaoBs,Jin.2015AaotLsftsewnd}, we know that $Li_{p}(z)$ has the following properties. 
\begin{lemma}
	For $p\neq 1,2,\ldots$, the function $Li_p(e^{-z})$ satisfies the singular expansion
	\begin{equation*}
		Li_p(e^{-z})\sim \Gamma(1-p)z^{p-1}+\sum_{l=0}^{\infty}(-1)^l\varsigma(p-l)\frac{z^l}{l!} \qquad as~z\rightarrow 0,
	\end{equation*}
	where $\varsigma(z)$ denotes the Riemann zeta function.
\end{lemma}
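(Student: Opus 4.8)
The plan is to treat $Li_p(e^{-z})$ as a \emph{harmonic sum} and extract its behaviour as $z\to 0$ through the Mellin transform, which is the standard route underlying \cite{Flajolet.1999SaaaoBs}. Writing
\begin{equation*}
	Li_p(e^{-z})=\sum_{j=1}^{\infty}\frac{1}{j^{p}}e^{-jz}
\end{equation*}
exhibits it in the form $\sum_{j}\lambda_{j}\,g(\mu_{j}z)$ with base function $g(x)=e^{-x}$, amplitudes $\lambda_{j}=j^{-p}$, and frequencies $\mu_{j}=j$. The first step is to record the Mellin transform of the base function, $\int_{0}^{\infty}e^{-x}x^{s-1}\,dx=\Gamma(s)$ for $\Re(s)>0$, together with the associated Dirichlet series $\sum_{j\geq 1}\lambda_{j}\mu_{j}^{-s}=\sum_{j\geq 1}j^{-(s+p)}=\varsigma(s+p)$ for $\Re(s+p)>1$.

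By the separation property of Mellin transforms of harmonic sums, the transform of $G(z):=Li_p(e^{-z})$ factorises as
\begin{equation*}
	G^{*}(s)=\Gamma(s)\,\varsigma(s+p),
\end{equation*}
valid in the fundamental strip $\Re(s)>\max(0,1-p)$. Next I would locate the singularities of $G^{*}$ to the left of this strip: the factor $\varsigma(s+p)$ contributes a single simple pole at $s=1-p$ with residue $1$, while $\Gamma(s)$ contributes simple poles at $s=-l$, $l=0,1,2,\dots$, with residue $(-1)^{l}/l!$. Here the hypothesis $p\notin\{1,2,\dots\}$ enters crucially: it guarantees that the zeta pole at $s=1-p$ does not coincide with any Gamma pole $s=-l$, so that all poles remain simple and no $\log z$ terms are generated.

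The expansion then follows from the inverse Mellin representation $G(z)=\frac{1}{2\pi\mathbf{i}}\int_{(c)}G^{*}(s)\,z^{-s}\,ds$ by shifting the vertical contour progressively to the left and collecting residues, the general term being $\mathrm{Res}_{s=s_{0}}[G^{*}(s)z^{-s}]$. The pole at $s=1-p$ yields $\Gamma(1-p)\,z^{p-1}$, and the pole at $s=-l$ yields $\frac{(-1)^{l}}{l!}\varsigma(p-l)\,z^{l}$; summing these reproduces exactly the stated singular expansion.

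The main obstacle is the analytic justification of the leftward contour shift, i.e.\ showing that the integrals over the connecting horizontal segments vanish in the limit and that the collected residues form a genuine asymptotic series. This rests on the growth/decay of $G^{*}(\sigma+it)$ in the imaginary direction: Stirling's formula gives the exponential decay $|\Gamma(\sigma+it)|\sim\sqrt{2\pi}\,|t|^{\sigma-1/2}e^{-\pi|t|/2}$, while the functional equation (or a Lindel\"{o}f-type bound) confines $\varsigma(s+p)$ to polynomial growth in $|t|$. Their product therefore decays exponentially along vertical lines, so the contour may be pushed arbitrarily far to the left with a controllable remainder, which delivers the expansion up to any prescribed order.
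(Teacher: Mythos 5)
Your proposal is correct, and it is essentially the argument behind the paper's own treatment: the paper does not prove this lemma but quotes it directly from \cite{Flajolet.1999SaaaoBs,Jin.2015AaotLsftsewnd}, and the proof in that cited source is precisely your Mellin-transform computation — factorization $G^{*}(s)=\Gamma(s)\varsigma(s+p)$ on the fundamental strip $\Re(s)>\max(0,1-p)$, leftward contour shift justified by the exponential decay of $\Gamma$ against the polynomial growth of $\varsigma$ on vertical lines, and residue collection at $s=1-p$ and $s=-l$, with $p\notin\{1,2,\ldots\}$ ruling out pole coalescence and hence logarithmic terms. All the details you supply (the residues $(-1)^{l}/l!$ of $\Gamma$, the residue $1$ of $\varsigma$, and the resulting terms $\Gamma(1-p)z^{p-1}$ and $\frac{(-1)^{l}}{l!}\varsigma(p-l)z^{l}$) check out against the stated expansion.
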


Similar to the proofs in \cite{Jin.2015AaotLsftsewnd}, we have
\begin{lemma}\label{lemLipabconvergence}
	Let $|z|\leq \frac{\pi}{\sin(\theta)}$ with $\theta\in (\pi/2,5\pi/6)$ and $-2<p<0$. Then
	\begin{equation*}
		Li_p(e^{-z})=\Gamma(1-p)z^{p-1}+\sum_{l=0}^{\infty}(-1)^l\varsigma(p-l)\frac{z^l}{l!}
	\end{equation*}
	converges absolutely.
\end{lemma}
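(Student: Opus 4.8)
The plan is to establish absolute convergence of the series $\sum_{l=0}^{\infty}(-1)^l\varsigma(p-l)\frac{z^l}{l!}$ by controlling the growth of $\varsigma(p-l)$ as $l\to\infty$; the leading term $\Gamma(1-p)z^{p-1}$ is a single finite quantity for $z\neq 0$ and plays no role in the convergence question. Since $p<0$, the argument $p-l$ tends to $-\infty$, so I would first invoke the functional equation of the Riemann zeta function,
\[
\varsigma(s)=2^{s}\pi^{s-1}\sin\!\left(\tfrac{\pi s}{2}\right)\Gamma(1-s)\varsigma(1-s),
\]
with $s=p-l$, to move the problem into the regime $1-s=1-p+l\to+\infty$, where $\varsigma(1-p+l)\to 1$ is harmless.

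The key estimates, carried out as $l\to\infty$, are the following: $|\varsigma(1-p+l)|$ is bounded (it tends to $1$); $|\sin(\pi(p-l)/2)|\leq 1$; the prefactor $2^{p-l}\pi^{p-l-1}=2^{p}\pi^{p-1}(2\pi)^{-l}$ contributes the geometric decay $(2\pi)^{-l}$ up to a $p$-dependent constant; and the Gamma factor satisfies $\Gamma(1-p+l)\sim l!\,l^{-p}$ by the ratio asymptotics $\Gamma(l+1-p)/\Gamma(l+1)\sim l^{-p}$. Multiplying these gives
\[
|\varsigma(p-l)|\leq C\,\frac{l^{-p}\,l!}{(2\pi)^{l}}
\]
for all large $l$, with $C=C(p)$. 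Dividing by $l!$ cancels the factorial exactly, so the general term of the series obeys
\[
\left|(-1)^{l}\varsigma(p-l)\frac{z^{l}}{l!}\right|\leq C\,l^{-p}\left(\frac{|z|}{2\pi}\right)^{l}.
\]
Because $-p>0$, the factor $l^{-p}$ grows only polynomially, and a polynomial-times-geometric series converges whenever $|z|<2\pi$. Finally I would verify that the hypothesis supplies exactly this: for $\theta\in(\pi/2,5\pi/6)$ one has $\sin\theta\in(1/2,1)$, hence $\frac{\pi}{\sin\theta}<2\pi$, so $|z|\leq\frac{\pi}{\sin\theta}<2\pi$ strictly, placing $z$ inside the disc of convergence and yielding absolute (indeed uniform, since the bound is $|z|$-uniform on the closed region) convergence.

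The main obstacle is pinning down the exact exponential growth rate $(2\pi)^{-l}l!$ of $|\varsigma(p-l)|$. This is driven entirely by the Gamma factor $\Gamma(1-p+l)$ in the functional equation, so the delicate point is combining Stirling's asymptotics with the $(2\pi)^{-l}$ from the $2^{s}\pi^{s-1}$ prefactor so that, after division by $l!$, the factorial is cancelled and precisely the ratio $|z|/(2\pi)$ survives; once this matching is clear, comparison of the tail with a convergent geometric series is routine. The restriction $-2<p<0$ is what keeps $\varsigma(1-p+l)$ bounded and $l^{-p}$ genuinely polynomial, and it is consistent with the preceding singular-expansion lemma's requirement $p\notin\{1,2,\dots\}$.
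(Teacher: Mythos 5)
Your proposal is correct, but it takes a genuinely different route from the paper. The paper offers no coefficient-level argument at all: its proof is the single remark ``Similar to the proofs in \cite{Jin.2015AaotLsftsewnd}'', deferring to an argument by singularity analysis --- $Li_{p}(e^{-z})$ is singular precisely at $z\in 2\pi\mathbf{i}\mathbb{Z}$, subtracting $\Gamma(1-p)z^{p-1}$ removes the singularity at $z=0$, so the remainder is analytic on the disc $|z|<2\pi$ and its Taylor series (whose coefficients are read off from the singular expansion of the preceding lemma) converges absolutely there; the hypothesis $\theta\in(\pi/2,5\pi/6)$ then gives $\sin\theta\in(\frac{1}{2},1)$, hence $\pi/\sin\theta<2\pi$, which is exactly your final step. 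You instead obtain the radius $2\pi$ by hand: the functional equation at $s=p-l$, with $|\varsigma(1-p+l)|\leq \varsigma(1-p)$ bounded (since $1-p>1$ this in fact holds for every $l\geq 0$, not only large $l$), $|\sin(\pi(p-l)/2)|\leq 1$, the prefactor $2^{p}\pi^{p-1}(2\pi)^{-l}$, and the Gamma-ratio asymptotics $\Gamma(l+1-p)/\Gamma(l+1)\sim l^{-p}$, yields $|\varsigma(p-l)|\leq C\,l^{-p}\,l!\,(2\pi)^{-l}$ and thus the summable majorant $C\,l^{-p}(|z|/2\pi)^{l}$. This is a sound, self-contained replacement, and it buys something the cited route does not state explicitly: a quantitative tail bound that is uniform in $z$ on the closed region, so uniform convergence comes for free. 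Two small caveats. First, your estimate proves absolute convergence of the series --- the lemma's literal assertion --- but not, by itself, the displayed equality: a convergent asymptotic expansion need not sum to the function, so upgrading the ``$\sim$'' of the preceding lemma to ``$=$'' still requires the analyticity of $Li_{p}(e^{-z})-\Gamma(1-p)z^{p-1}$ on $|z|<2\pi$ (making the asymptotic coefficients the Taylor coefficients), which is precisely what the singularity-analysis route supplies; one sentence to this effect would close the statement completely. Second, a minor attribution point: boundedness of $\varsigma(1-p+l)$ and the polynomial character of $l^{-p}$ need only $p<0$; the lower restriction $p>-2$ is there because the paper applies the lemma with $p=\alpha-2\in(-2,-1)$, not because your estimates require it.
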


To obtain the representation of solution to Eq. \eqref{eqfullschl1}, we  introduce $\mathbf{v}_{1,h}^{n}=\mathbf{u}_{1,h}^{n}-\mathbf{u}^{0}$, which leads to
\begin{equation}\label{eqfullschl10}
	\frac{\sum_{k=0}^{n-1}d^{(\alpha)}_{k}\mathbf{v}_{1,h}^{n-k}-\sum_{k=0}^{n-2}d^{(\alpha)}_{k}\mathbf{v}_{1,h}^{n-1-k}}{\tau}+\frac{\mathbf{A}(3\mathbf{v}_{1,h}^{n}+\mathbf{v}_{1,h}^{n-2})}{4}=\frac{\mathbf{f}^{n-1}+\mathbf{f}^{n}}{2}-\mathbf{A}\mathbf{u}^{0}.
\end{equation}
Multiplying $\zeta^{n}$ on both sides of Eq. \eqref{eqfullschl10} and summing it from $1$ to $\infty$, one obtains
\begin{equation*}
	\begin{aligned}
		&\sum_{n=1}^{\infty}\frac{\sum_{k=0}^{n-1}d^{(\alpha)}_{k}\mathbf{v}_{1,h}^{n-k}-\sum_{k=0}^{n-2}d^{(\alpha)}_{k}\mathbf{v}_{1,h}^{n-1-k}}{\tau}\zeta^{n}+\sum_{n=1}^{\infty}\frac{\mathbf{A}(3\mathbf{v}_{1,h}^{n}+\mathbf{v}_{1,h}^{n-2})}{4}\zeta^{n}\\
		&\qquad\qquad\qquad\qquad\qquad\qquad\qquad\qquad=\sum_{n=1}^{\infty}\frac{\mathbf{f}^{n-1}+\mathbf{f}^{n}}{2}\zeta^{n}-\sum_{n=1}^{\infty}\mathbf{A}\mathbf{u}^{0}\zeta^{n}.
	\end{aligned}
\end{equation*}
Simple calculations show that
\begin{equation*}
	\frac{1-\zeta}{\tau}\left(\sum_{k=0}^{\infty}d^{(\alpha)}_{k}\zeta^{k}\right)\sum_{j=1}^{\infty}\mathbf{v}^{j}_{1,h}\zeta^{j}+\frac{3+\zeta^{2}}{4}\sum_{j=1}^{\infty}\mathbf{A}\mathbf{v}^{j}_{1,h}\zeta^{j}=\frac{1+\zeta}{2}\sum_{j=1}^{\infty}\mathbf{f}^{j}\zeta^{j}+\frac{\mathbf{f}^{0}\zeta}{2}-\sum_{j=1}^{\infty}\mathbf{A}\mathbf{u}^{0}\zeta^{j}.
\end{equation*}
According to the definitions of $d^{(\alpha)}_{k}$ and $b^{(\alpha)}_{k}$, there holds
\begin{equation*}
	\begin{aligned}
		\sum_{k=1}^{\infty}d^{(\alpha)}_{k}\zeta^{k}=&\tau^{1-\alpha}\left (\sum_{j=1}^{\infty}(b^{(\alpha)}_j-b^{(\alpha)}_{j-1})\zeta^j+b^{(\alpha)}_0\zeta^0\right )\\
		=& \tau^{1-\alpha}(1-\zeta)\sum_{j=0}^{\infty}b^{(\alpha)}_j\zeta^j=\frac{\tau^{1-\alpha}(1-\zeta)}{\Gamma(3-\alpha)} \left(\sum_{j=0}^{\infty}((j+1)^{2-\alpha}-j^{2-\alpha})\zeta^j\right)\\
		=&\frac{\tau^{1-\alpha}}{\Gamma(3-\alpha)}\frac{(1-\zeta)^{2}}{\zeta} \left(\sum_{j=0}^{\infty}j^{2-\alpha}\zeta^{j}\right)=\frac{\tau^{1-\alpha}}{\Gamma(3-\alpha)}\frac{(1-\zeta)^{2}}{\zeta} Li_{\alpha-2}(\zeta).
	\end{aligned}
\end{equation*}
Thus
\begin{equation}\label{eqfullschl11}
	\frac{(1-\zeta)^{3}\tau^{-\alpha}}{\Gamma(3-\alpha)\zeta}Li_{\alpha-2}(\zeta)\sum_{j=1}^{\infty}\mathbf{v}^{j}_{1,h}\zeta^{j}+\frac{3+\zeta^{2}}{4}\sum_{j=1}^{\infty}\mathbf{A}\mathbf{v}^{j}_{1,h}\zeta^{j}=\frac{1+\zeta}{2}\sum_{j=1}^{\infty}\mathbf{f}^{j}\zeta^{j}+\frac{\mathbf{f}^{0}\zeta}{2}-\sum_{j=1}^{\infty}\mathbf{A}\mathbf{u}^{0}\zeta^{j}.
\end{equation}
For the convenience of our analysis, introducing
\begin{equation}\label{equdefmu}
	\begin{aligned}
		\psi_{\alpha}(\zeta)=&\frac{(1-\zeta)^{2}((1-\zeta)+(1-\zeta)^{2}/2)\tau^{-\alpha}}{\Gamma(3-\alpha)\zeta}Li_{\alpha-2}(\zeta),\\
		\mu(\zeta)=&\frac{4(1-\zeta)}{(3+\zeta^{2})((1-\zeta)+(1-\zeta)^{2}/2)}=\frac{4}{(3+\zeta^{2})(3/2-\zeta/2)},\\
		\eta_{1}(\zeta)=&\frac{4\zeta}{(3+\zeta^{2})(1-\zeta)},\quad \eta_{2}(\zeta)=\frac{4(1+\zeta)}{2(3+\zeta^{2})}=\frac{2(1+\zeta)}{(3+\zeta^{2})},
	\end{aligned}
\end{equation}
we can rewrite Eq. \eqref{eqfullschl11} as
\begin{equation*}
	\mu(\zeta)\psi_{\alpha}(\zeta)\sum_{j=1}^{\infty}\mathbf{v}^{j}_{1,h}\zeta^{j}+\sum_{j=1}^{\infty}\mathbf{A}\mathbf{v}^{j}_{1,h}\zeta^{j}=\eta_{2}(\zeta)\left (\sum_{j=1}^{\infty}\mathbf{f}^{j}\zeta^{j}+\frac{\mathbf{f}^{0}\zeta}{1+\zeta}\right )-\eta_{1}(\zeta)\mathbf{A}\mathbf{u}^{0}.
\end{equation*}
Then there holds
\begin{equation*}
	\sum_{j=1}^{\infty}\mathbf{v}^{j}_{1,h}\zeta^{j}=(\mu(\zeta)\psi_{\alpha}(\zeta)+\mathbf{A})^{-1}\eta_{2}(\zeta)\left (\sum_{j=1}^{\infty}\mathbf{f}^{j}\zeta^{j}+\frac{\mathbf{f}^{0}\zeta}{1+\zeta}\right )-(\mu(\zeta)\psi_{\alpha}(\zeta)+\mathbf{A})^{-1}\eta_{1}(\zeta)\mathbf{A}\mathbf{u}^{0}.
\end{equation*}
Using Cauchy's integral theorem and doing simple calculations result in the solution of Eq. \eqref{eqfullschl1} as
\begin{equation}\label{equal1sol}
	\begin{aligned}
		\mathbf{v}^{n}_{1,h}=&\frac{\tau}{2\pi\mathbf{i}}\int_{\Gamma_{\theta}^{\tau}}e^{zt_{n}}(\mu(e^{-z\tau})\psi_{\alpha}(e^{-z\tau})+\mathbf{A})^{-1}\eta_{2}(e^{-z\tau})\left (\sum_{j=1}^{\infty}\mathbf{f}^{j}e^{-zt_{j}}+\frac{\mathbf{f}^{0}e^{-z\tau}}{1+e^{-z\tau}}\right )dz\\
		&-\frac{\tau}{2\pi\mathbf{i}}\int_{\Gamma_{\theta}^{\tau}}e^{zt_{n}}(\mu(e^{-z\tau})\psi_{\alpha}(e^{-z\tau})+\mathbf{A})^{-1}\eta_{1}(e^{-z\tau})\mathbf{A}\mathbf{u}^{0}dz,
	\end{aligned}
\end{equation}
where $\Gamma_{\theta}^{\tau}=\{z\in\Gamma_{\theta},|z|<\frac{\pi}{\tau\sin(\theta)}\}$.

Similarly, the solution of Eq. \eqref{eqfullschsbd} can be reformulated as
\begin{equation}\label{equasbdsol}
	\begin{aligned}
		\mathbf{v}^{n}_{2,h}=&\frac{\tau}{2\pi\mathbf{i}}\int_{\Gamma_{\theta}^{\tau}}e^{zt_{n}}(\mu(e^{-z\tau})(\delta_{\tau,2}(e^{-z\tau}))^{\alpha}+\mathbf{A})^{-1}\eta_{2}(e^{-z\tau})\left (\sum_{j=1}^{\infty}\mathbf{f}^{j}e^{-zt_{j}}+\frac{\mathbf{f}^{0}e^{-z\tau}}{1+e^{-z\tau}}\right )dz\\
		&-\frac{\tau}{2\pi\mathbf{i}}\int_{\Gamma_{\theta}^{\tau}}e^{zt_{n}}(\mu(e^{-z\tau})(\delta_{\tau,2}(e^{-z\tau}))^{\alpha}+\mathbf{A})^{-1}\eta_{1}(e^{-z\tau})\mathbf{A}\mathbf{u}^{0}dz,
	\end{aligned}
\end{equation}
where $\mathbf{v}_{2,h}^{n}=\mathbf{u}_{2,h}^{n}-\mathbf{u}^{0}$.

Next, to show the convergence rates of $\overline{L1}$ scheme \eqref{eqfullschl1}, the following  lemmas are needed.

\begin{lemma}\label{lempsi}
	Let $|z|\leq \frac{\pi}{\sin(\theta)\tau}$ with $\theta\in (\pi/2,5\pi/6)$. Then we have
	\begin{equation*}
		\begin{aligned}
			&|\psi_{\alpha}(e^{-z\tau})-z^{\alpha}|\leq C|z|^{\alpha+2}\tau^{2},\quad C^{-1}|z|^{\alpha}\leq|\psi_{\alpha}(e^{-z\tau})|\leq C|z|^{\alpha}.\\
		\end{aligned}
	\end{equation*}
\end{lemma}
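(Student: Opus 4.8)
The plan is to pass to the scalar variable $\omega:=z\tau$, which by the hypothesis ranges over the compact set $\mathcal{R}=\{\omega:\,|\omega|\le \pi/\sin(\theta)\}$; for the sharp lower bound I additionally use that the relevant $z$ lie in the sector $\Sigma_\theta$, so that $|\arg\omega|\le\theta$. Writing $\varrho(\omega):=(1-e^{-\omega})/\omega$, I first record that $\varrho$ is analytic and zero-free on $\mathcal{R}$: the zeros of $1-e^{-\omega}$ are the points $2\pi\mathbf{i}k$, and since $\pi/\sin(\theta)<2\pi$ for $\theta\in(\pi/2,5\pi/6)$ the only one meeting $\mathcal{R}$ is $\omega=0$, where $\varrho(0)=1$. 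Factoring $(1-\zeta)+(1-\zeta)^{2}/2=(1-\zeta)(3-\zeta)/2$ in the definition \eqref{equdefmu} of $\psi_\alpha$ and setting $\zeta=e^{-\omega}$ gives the compact form
\begin{equation*}
\psi_\alpha(e^{-\omega})=\frac{(1-e^{-\omega})^{3}(3-e^{-\omega})\,\tau^{-\alpha}}{2\,\Gamma(3-\alpha)\,e^{-\omega}}\,Li_{\alpha-2}(e^{-\omega}).
\end{equation*}
Since $\alpha-2\in(-2,-1)$, Lemma \ref{lemLipabconvergence} applies with $p=\alpha-2$ and supplies the absolutely convergent expansion $Li_{\alpha-2}(e^{-\omega})=\Gamma(3-\alpha)\,\omega^{\alpha-3}+R(\omega)$, where $R(\omega)=\sum_{l\ge 0}(-1)^{l}\varsigma(\alpha-2-l)\omega^{l}/l!$ is analytic, hence bounded, on $\mathcal{R}$.

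Next I substitute this expansion and split $\psi_\alpha(e^{-\omega})=P_{1}+P_{2}$ according to the two terms. Using $(1-e^{-\omega})^{3}=\omega^{3}\varrho(\omega)^{3}$ together with $\omega^{3}\omega^{\alpha-3}=\omega^{\alpha}$ and the branch identity $\tau^{-\alpha}\omega^{\alpha}=z^{\alpha}$, the leading part collapses to $P_{1}=z^{\alpha}H(\omega)$ with $H(\omega):=e^{\omega}\varrho(\omega)^{3}(3-e^{-\omega})/2$. The function $H$ is analytic on $\mathcal{R}$ with $H(0)=1$, and a short Taylor computation shows that the linear term cancels, i.e. $H(\omega)=1+O(\omega^{2})$; this cancellation is exactly the second-order consistency of the combined backward-difference/averaging construction and is what produces the factor $\tau^{2}$. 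Consequently $|P_{1}-z^{\alpha}|=|z|^{\alpha}|H(\omega)-1|\le C|z|^{\alpha}|\omega|^{2}=C|z|^{\alpha+2}\tau^{2}$. The remainder becomes $P_{2}=z^{\alpha}\omega^{3-\alpha}Q(\omega)$ with $Q(\omega):=\varrho(\omega)^{3}(3-e^{-\omega})R(\omega)/(2\Gamma(3-\alpha)e^{-\omega})$ bounded on $\mathcal{R}$, whence $|P_{2}|\le C|z|^{\alpha}|\omega|^{3-\alpha}=C|z|^{3}\tau^{3-\alpha}$; writing $|z|^{3}\tau^{3-\alpha}=|z|^{\alpha+2}\tau^{2}(|z|\tau)^{1-\alpha}$ and using $|z|\tau=|\omega|\le\pi/\sin(\theta)$ with $1-\alpha>0$ gives $|P_{2}|\le C|z|^{\alpha+2}\tau^{2}$. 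Adding the two bounds yields the first claimed estimate $|\psi_\alpha(e^{-\omega})-z^{\alpha}|\le C|z|^{\alpha+2}\tau^{2}$.

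The upper bound of the second estimate is then immediate: $|\psi_\alpha(e^{-\omega})|\le|z|^{\alpha}+C|z|^{\alpha+2}\tau^{2}=|z|^{\alpha}(1+C|\omega|^{2})\le C|z|^{\alpha}$, again because $|\omega|$ is bounded on $\mathcal{R}$. For the lower bound I write $\psi_\alpha(e^{-\omega})=z^{\alpha}G(\omega)$ with $G:=H+\omega^{3-\alpha}Q$, a continuous function on the compact set $\overline{\mathcal{R}}$ satisfying $G(0)=1$; the bound $|\psi_\alpha(e^{-\omega})|\ge C^{-1}|z|^{\alpha}$ follows once $G$ is shown to be bounded away from $0$, equivalently that $G$ does not vanish on $\overline{\mathcal{R}}$. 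This is the step I expect to be the main obstacle. The naive estimate $|G|\ge 1-C|\omega|^{2}$ coming from the consistency bound is positive only for small $|\omega|$, so for $|\omega|$ of order one a compactness argument is needed: one must verify that each explicit factor is zero-free on $\mathcal{R}$. The factors $(1-e^{-\omega})^{3}$ and $(3-e^{-\omega})$ cause no trouble, since the zeros of $1-e^{-\omega}$ lie at $2\pi\mathbf{i}k$ and those of $3-e^{-\omega}$ at $-\ln 3+2\pi\mathbf{i}k$, all of which fall outside $\mathcal{R}\cap\{|\arg\omega|\le\theta\}$ precisely because $\theta<5\pi/6$ forces $\pi/\sin(\theta)<2\pi$ and keeps $\omega$ off the negative real axis; the genuinely delicate point is the non-vanishing of the analytically continued $Li_{\alpha-2}(e^{-\omega})$, which near $\omega=0$ behaves like $\Gamma(3-\alpha)\omega^{\alpha-3}\ne 0$ but must be controlled on the remaining compact region. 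Establishing this non-vanishing (equivalently, that the symbol $\psi_\alpha(e^{-z\tau})$ maps the truncated sector into a sector avoiding the origin) is the crux, after which continuity and compactness deliver $|G|\ge C^{-1}$ and hence the two-sided bound.
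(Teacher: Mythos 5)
Your treatment of the first estimate and of the upper bound is correct and is, in substance, the paper's own argument: the paper Taylor-expands the prefactor, $e^{z\tau}(1-e^{-z\tau})^{2}\left((1-e^{-z\tau})+(1-e^{-z\tau})^{2}/2\right)=(z\tau)^{3}+\mathcal{O}((z\tau)^{5})$, and multiplies by the singular expansion of Lemma \ref{lemLipabconvergence} to get $\psi_{\alpha}(e^{-z\tau})=z^{\alpha}+z^{\alpha+2}\mathcal{O}(\tau^{2})$. Your version via $H(\omega)=1+O(\omega^{2})$ and the remainder $P_{2}$, with the observation $(|z|\tau)^{1-\alpha}\leq C$ that absorbs $|z|^{3}\tau^{3-\alpha}$ into $C|z|^{\alpha+2}\tau^{2}$, is actually slightly more explicit than the paper about why the polylog remainder does not spoil the $\tau^{2}$ rate, and your restriction to $|\arg\omega|\leq\theta$ matches how the lemma is used (only on $\Gamma_{\theta}^{\tau}$).

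The lower bound, however, you leave unproven, and as written that is a genuine gap: the estimate $|G(\omega)|\geq 1-C|\omega|^{2}$ covers only small $|\omega|$, and you correctly isolate the missing ingredient as the non-vanishing of the analytically continued $Li_{\alpha-2}(e^{-\omega})$ on the remaining compact region. Two remarks. First, the paper's own proof has exactly the same hole: it computes only the limit $\lim_{|z\tau|\to 0}z^{\alpha}/\psi_{\alpha}(e^{-z\tau})=1$ and appeals to ``similar discussions'' in \cite{Yan.2018AaotmLsftpdewnd}, so the compact-region statement is delegated to the cited literature rather than proved; your proposal is therefore no less complete than the paper, but neither is it self-contained. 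Second, your own factorization closes the gap cleanly: since $(1-\zeta)+(1-\zeta)^{2}/2=(1-\zeta)(3-\zeta)/2$, one has $\psi_{\alpha}(\zeta)=\frac{3-\zeta}{2}\,\psi^{L1}(\zeta)$ with $\psi^{L1}(\zeta)=\frac{(1-\zeta)^{3}\tau^{-\alpha}}{\Gamma(3-\alpha)\zeta}Li_{\alpha-2}(\zeta)$ precisely the $L1$ symbol appearing in \eqref{eqfullschl11}; the two-sided bound $C^{-1}|z|^{\alpha}\leq|\psi^{L1}(e^{-z\tau})|\leq C|z|^{\alpha}$ on the truncated contour is exactly the known result of \cite{Jin.2015AaotLsftsewnd,Yan.2018AaotmLsftpdewnd}, where the sectoriality --- in particular the non-vanishing you call the crux --- is established, while the extra factor $3-e^{-z\tau}$ is bounded above and strictly away from zero on the region by the zero-location argument you already gave ($\omega=-\ln 3+2k\pi\mathbf{i}$ lies outside $\mathcal{R}\cap\{|\arg\omega|\leq\theta\}$). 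With that reduction or citation supplied, your argument is complete and coincides with the paper's.
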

\begin{proof}
	Simple calculations show
	\begin{equation*}
		\begin{aligned}
			&\frac{(1-e^{-z\tau})^{2}((1-e^{-z\tau})+(1-e^{-z\tau})^{2}/2)}{e^{-z\tau}}\\
			=&e^{z\tau}(1-e^{-z\tau})^{2}((1-e^{-z\tau})+(1-e^{-z\tau})^{2}/2)\\
			=&(1+z\tau+(z\tau)^{2}/2+(z\tau)^{3}/6+\mathcal{O}((z\tau)^{4})\\
			&\cdot(z\tau-(z\tau)^{2}/2+(z\tau)^{3}/6+\mathcal{O}((z\tau)^{4}))^{2}\\
			&\cdot(z\tau-(z\tau)^{3}/3+\mathcal{O}((z\tau)^{4}))\\
			=&(z\tau)^{3}+\mathcal{O}((z\tau)^{5}).
		\end{aligned}
	\end{equation*}
	Combining Lemma \ref{lemLipabconvergence}, we arrive at
	\begin{equation*}
		\begin{aligned}
			\psi_{\alpha}(e^{-z\tau})=&\tau^{-\alpha}((z\tau)^{3}+\mathcal{O}((z\tau)^{5}))\left ((z\tau)^{\alpha-3}+\sum_{l=0}^{\infty}\frac{(-1)^{l}\varsigma(\alpha-2-k)}{\Gamma(3-\alpha)}\frac{(z\tau)^{l}}{l!}\right )\\
			=&z^{\alpha}+z^{\alpha+2}\mathcal{O}(\tau^{2}),
		\end{aligned}
	\end{equation*}
	which leads to the first desired result. Similar to the discussions in \cite{Yan.2018AaotmLsftpdewnd}, we have
	\begin{equation*}
		\begin{aligned}
			&\lim_{|z\tau|\rightarrow 0}\frac{z^{\alpha}}{\psi_{\alpha}(e^{-z\tau})}
			=\lim_{|z\tau|\rightarrow 0}\frac{z^{\alpha}}{z^{\alpha}+z^{\alpha+2}\mathcal{O}(\tau^{2})}\\
			&\qquad=\lim_{|z\tau|\rightarrow 0}\frac{1}{1+\mathcal{O}(z^{2}\tau^{2})}=1.
		\end{aligned}
	\end{equation*}
	Therefore $|z|^{\alpha}\leq C|\psi_{\alpha}(e^{-z\tau})|$. At the same time, there holds
	\begin{equation*}
		\lim_{|z\tau|\rightarrow 0}\frac{\psi_{\alpha}(e^{-z\tau})}{z^{\alpha}}
		=1,
	\end{equation*}
	so $|\psi_{\alpha}(e^{-z\tau})|\leq C|z|^{\alpha}$ holds. This ends the proof.
\end{proof}

By Taylor's expansion, it's easy to check the following estimates for $\mu$, $\eta_{1}$, and $\eta_{2}$, defined in \eqref{equdefmu}.
\begin{lemma}\label{lemueta}
	For $z\in \Gamma_{\theta}^{\tau}$, one has
	\begin{equation*}
		\begin{aligned}
			&|\mu(e^{-z\tau})-1|\leq C|z|^{3}\tau^{3},\\
			&\left |\tau\eta_{1}(e^{-z\tau})-\frac{1}{z}\right |\leq C|z|\tau^{2},\\
			%&|\eta_{2}(e^{-z\tau})-1|\leq C|z|^{2}\tau^{2},\\
			&\left |\eta_{2}(e^{-z\tau})\tau\left (\frac{ e^{-z\tau}}{1-e^{-z\tau}}+\frac{e^{-z\tau}}{1+e^{-z\tau}}\right )-\frac{1}{z}\right |\leq C|z|\tau^{2},\\
			&\left |\eta_{2}(e^{-z\tau})\frac{\tau^{2} e^{-z\tau}}{(1-e^{-z\tau})^{2}}-\frac{1}{z^{2}}\right |\leq C\tau^{2}.
		\end{aligned}
	\end{equation*}
\end{lemma}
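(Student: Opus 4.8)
The plan is to reduce all four inequalities to Taylor expansions in the single bounded variable $w:=z\tau$. On $\Gamma_{\theta}^{\tau}$ we have $|w|=|z|\tau<\pi/\sin(\theta)$, so $w$ ranges over a bounded subset of the two rays $\arg w=\pm\theta$; setting $\zeta=e^{-w}$, each of $\mu(\zeta),\eta_{1}(\zeta),\eta_{2}(\zeta)$ becomes an analytic function of $w$ whose only singularity near the origin is the pole coming from the factors $1-\zeta$. The essential point is that the definitions in \eqref{equdefmu} are engineered so that, after expanding, enough leading Taylor coefficients match $1$, $1/z$, or $1/z^{2}$, and the subtraction in each inequality removes the singular part. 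I would carry out the four expansions in order.

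For the first inequality I would expand the reciprocal: since $\mu(\zeta)^{-1}=\tfrac18(3+\zeta^{2})(3-\zeta)=\tfrac18(9-3\zeta+3\zeta^{2}-\zeta^{3})$, a direct check gives $\mu(1)^{-1}=1$ and $(\mu^{-1})'(1)=(\mu^{-1})''(1)=0$, so $\mu(e^{-w})^{-1}-1=O((\zeta-1)^{3})=O(w^{3})$, hence $\mu(e^{-z\tau})-1=O(w^{3})$, which is the claimed $C|z|^{3}\tau^{3}$. For the second inequality I would write $\tau\eta_{1}(e^{-w})-\tfrac1z=\tau\bigl(\eta_{1}(e^{-w})-\tfrac1w\bigr)=\tau\,\tfrac{h(w)-1}{w}$ with $h(w):=w\,\eta_{1}(e^{-w})$; expanding the two factors $\tfrac{4e^{-w}}{3+e^{-2w}}=1-\tfrac{w}{2}-\tfrac{w^{2}}{4}+\cdots$ and $\tfrac{w}{1-e^{-w}}=1+\tfrac{w}{2}+\tfrac{w^{2}}{12}+\cdots$ gives $h(0)=1$, $h'(0)=0$, so $h(w)-1=O(w^{2})$ and the difference is $O(\tau w)=O(|z|\tau^{2})$.

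The third inequality requires no new computation once one notices the algebraic identity
\begin{equation*}
\eta_{2}(\zeta)\left(\frac{\zeta}{1-\zeta}+\frac{\zeta}{1+\zeta}\right)
=\eta_{2}(\zeta)\,\frac{2\zeta}{1-\zeta^{2}}
=\frac{2(1+\zeta)}{3+\zeta^{2}}\cdot\frac{2\zeta}{(1-\zeta)(1+\zeta)}
=\eta_{1}(\zeta),
\end{equation*}
so its left-hand side is exactly $\tau\eta_{1}(e^{-z\tau})$ and the bound coincides with the second inequality. For the fourth inequality I would factor out $\tau^{2}$ and set $k(w):=\tfrac{\eta_{2}(e^{-w})\,e^{-w}\,w^{2}}{(1-e^{-w})^{2}}$, so that $\eta_{2}(e^{-w})\tfrac{\tau^{2}e^{-w}}{(1-e^{-w})^{2}}-\tfrac1{z^{2}}=\tau^{2}\,\tfrac{k(w)-1}{w^{2}}$; using $\bigl(\tfrac{w}{1-e^{-w}}\bigr)^{2}=1+w+\tfrac{5}{12}w^{2}+\cdots$ together with $\eta_{2}(e^{-w})e^{-w}=1-w+\tfrac14w^{2}+\cdots$ yields $k(w)=1-\tfrac13w^{2}+\cdots$, whence $k(w)-1=O(w^{2})$ and the quantity is $O(\tau^{2})$.

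The genuinely non-routine point, and the one I expect to be the main obstacle, is \emph{uniformity}: every $O(\cdot)$ must hold over the whole contour, not merely as $w\to0$. Near the origin the expansions above (with remainders controlled because $|w|$ is bounded) give the estimates at once; for $w$ bounded away from $0$ on the rays $\arg w=\pm\theta$ with $|w|\le\pi/\sin(\theta)$, the functions $\eta_{1}(e^{-w})-1/w$ and $k(w)$ are continuous on a compact set, so one only needs that the denominators $1-e^{-w}$, $1+e^{-w}$, and $3+e^{-2w}$ do not vanish there. Since $\Re w<0$ on these rays, the first two are harmless, but $3+e^{-2w}=0$ (i.e.\ $\zeta^{2}=-3$) can occur for isolated values of $\theta\in(\pi/2,5\pi/6)$; the careful step is therefore to fix (or slightly restrict) $\theta$ so that $3+e^{-2w}$ stays bounded away from zero on $\Gamma_{\theta}^{\tau}$, after which the two regimes combine to give a finite constant $C$ depending only on $\theta$ and $\alpha$.
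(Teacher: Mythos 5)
Your proposal is correct and is essentially the paper's own argument carried out in full: the paper disposes of Lemma \ref{lemueta} with the single remark that the estimates follow ``by Taylor's expansion,'' and your expansions in the variable $w=z\tau$ — all of which check out, including $\mu^{-1}(\zeta)=\tfrac18(9-3\zeta+3\zeta^{2}-\zeta^{3})$ with vanishing first and second derivatives at $\zeta=1$, $h(w)=1-\tfrac{5}{12}w^{2}+O(w^{3})$, and $k(w)=1-\tfrac13w^{2}+O(w^{3})$ — supply exactly the omitted verification. Your two additions, the identity $\eta_{2}(\zeta)\bigl(\tfrac{\zeta}{1-\zeta}+\tfrac{\zeta}{1+\zeta}\bigr)=\eta_{1}(\zeta)$ reducing the third bound to the second, and the observation that $3+e^{-2w}$ can vanish on the contour for isolated $\theta$ so that $\theta$ must be kept close to $\tfrac{\pi}{2}$ (consistent with the hypothesis of Lemma \ref{lemtheta}), are correct and make explicit the uniformity that the paper leaves implicit.
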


%\begin{lemma}\cite{bibid}
%	For $z\in \Gamma^{\tau}_{\theta}$ with $\theta$ close to $\frac{\pi}{2}$, we have
%	\begin{equation*}
	%		\arg\left (\frac{(1-e^{-z\tau})^{2}}{e^{-z\tau}}Li_{\alpha-2}(e^{-z\tau})\right )\in[-\frac{\pi}{2},\frac{\alpha\pi}{2}).
	%	\end{equation*}
%	and
%	\begin{equation*}
	%		(\delta_{\tau,2}(e^{-z\tau}))^{\alpha-1}\in\Sigma_{(1-\alpha)\frac{\pi}{2}+\epsilon}.
	%	\end{equation*}
%\end{lemma}
Moreover, we show $\mu(e^{-z\tau})\psi_{\alpha}(e^{-z\tau})\in\Sigma_{\theta_{0}}$ and $\mu(e^{-z\tau})(\delta_{\tau,2}(e^{-z\tau}))^{\alpha}\in\Sigma_{\theta_{0}}$ for $z\in \Gamma^{\tau}_{\theta}$, where $\theta$ is sufficiently close to $\frac{\pi}{2}$.
\begin{lemma}\label{lemtheta}
	For $z\in \Gamma^{\tau}_{\theta}$ with $\theta$ sufficiently close to $\frac{\pi}{2}$, there exists $\theta_{0}\in(\frac{\pi}{2},\pi)$ such that
	\begin{equation*}
		\mu(e^{-z\tau})\psi_{\alpha}(e^{-z\tau})\in\Sigma_{\theta_{0}}
	\end{equation*}
	and
	\begin{equation*}
		\mu(e^{-z\tau})(\delta_{\tau,2}(e^{-z\tau}))^{\alpha}\in\Sigma_{\theta_{0}}.
	\end{equation*}
\end{lemma}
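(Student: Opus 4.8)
The plan is to show that the arguments of both symbols stay uniformly below some $\theta_{0}<\pi$ by splitting the contour into a small neighbourhood of $z\tau=0$, where the perturbation estimates of Lemmas~\ref{lempsi} and \ref{lemueta} apply, and the remaining bounded part, where a compactness argument is used. First I would set $w=z\tau$ and observe that, since $\tau>0$ is a positive real scalar, the factors $\tau^{-\alpha}$ hidden in $\psi_{\alpha}$ and in $(\delta_{\tau,2})^{\alpha}$ do not affect arguments; hence it suffices to control $\arg\big(\mu(e^{-z\tau})\psi_{\alpha}(e^{-z\tau})\big)$ and $\arg\big(\mu(e^{-z\tau})(\tau\delta_{\tau,2}(e^{-z\tau}))^{\alpha}\big)$, both of which depend only on $w$, as $w$ ranges over the compact arc $\{w:|\arg w|=\theta,\ 0<|w|\le \pi/\sin\theta\}$. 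I would also record the geometric fact underlying the hypothesis ``$\theta$ close to $\pi/2$'': because $\operatorname{Re}w=|w|\cos\theta$ with $\cos\theta\to0^{-}$, the point $\zeta=e^{-z\tau}$ lies in a thin annulus just outside the unit circle; consequently the roots $\zeta=\pm\mathbf{i}\sqrt{3}$ and $\zeta=3$ of the denominators of $\mu$ are avoided, so $\mu(e^{-z\tau})$ is well defined and nonzero, and $Li_{\alpha-2}(e^{-z\tau})$ converges absolutely by Lemma~\ref{lemLipabconvergence} (taking $p=\alpha-2\in(-2,-1)$).

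On a small neighbourhood $|w|\le\rho$ I would invoke $\psi_{\alpha}(e^{-z\tau})=z^{\alpha}\big(1+\mathcal{O}((z\tau)^{2})\big)$ and $\mu(e^{-z\tau})=1+\mathcal{O}((z\tau)^{3})$ from Lemmas~\ref{lempsi} and \ref{lemueta}, together with the analogous expansion $\tau\delta_{\tau,2}(e^{-z\tau})=z\tau\big(1+\mathcal{O}((z\tau)^{2})\big)$ for the second symbol. For $\rho$ small enough these give $\mu(e^{-z\tau})\psi_{\alpha}(e^{-z\tau})=z^{\alpha}(1+o(1))$, so its argument lies within an arbitrarily small margin of $\arg(z^{\alpha})=\alpha\arg(z)=\pm\alpha\theta$. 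Since $\alpha\in(0,1)$ and $\theta$ is close to $\pi/2$, the quantity $\alpha\theta$ is close to $\alpha\pi/2<\pi/2$, hence bounded strictly below $\pi$; thus there is $\theta_{0}'<\pi$ placing both symbols in $\Sigma_{\theta_{0}'}$ for $|w|\le\rho$.

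On the remaining compact piece $\rho\le|w|\le\pi/\sin\theta$ the two symbols are continuous (in fact analytic) in $w$. Provided they never vanish and never meet the non-positive real axis there, $\arg$ is a continuous function on a compact set and therefore attains a maximum $\theta_{0}''<\pi$; taking $\theta_{0}=\max(\theta_{0}',\theta_{0}'')<\pi$ then completes the argument, and the same reasoning applies verbatim to $\mu(e^{-z\tau})(\delta_{\tau,2}(e^{-z\tau}))^{\alpha}$. The genuine obstacle is precisely this sector-avoidance on the compact piece. I would treat it by letting $\theta\downarrow\pi/2$, which drives $\zeta=e^{-z\tau}$ onto the unit circle; there $\psi_{\alpha}$ and $(\tau\delta_{\tau,2})^{\alpha}$ reduce to the symbols of the classical $L1$ and $SBD$ (BDF2) quadratures raised to the power $\alpha$, whose $A$-stability and sectoriality (as in \cite{Lubich.1996Ndeefaoaeewapmt,Jin.2015AaotLsftsewnd}) confine them to a fixed sector; after the $\alpha$-th power and multiplication by $\mu$, which is close to $1$ on the thin annulus, the resulting half-angle stays below $\pi$. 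A continuity argument in $\theta$ then shows that for $\theta$ sufficiently near $\pi/2$ the perturbed symbols remain inside a slightly enlarged sector still contained in $\Sigma_{\theta_{0}}$ with $\theta_{0}<\pi$.
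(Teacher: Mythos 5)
Your overall skeleton (perturbation expansion near $z\tau=0$, compactness plus continuity in $\theta$ elsewhere) is reasonable, and you correctly identified where the real difficulty sits, but the step you propose to close it does not work. The gap is the sectoriality of the averaged-$L1$ symbol on the compact piece. On the unit circle $\psi_{\alpha}$ does \emph{not} reduce to ``the classical $L1$ symbol raised to the power $\alpha$'': by \eqref{equdefmu} it is $\psi_{\alpha}(\zeta)=\frac{(1-\zeta)^{2}\left((1-\zeta)+(1-\zeta)^{2}/2\right)\tau^{-\alpha}}{\Gamma(3-\alpha)\zeta}Li_{\alpha-2}(\zeta)$, a genuinely different symbol built from the polylogarithm $Li_{\alpha-2}$ with extra rational factors coming from the time averaging, and it is not a convolution-quadrature symbol generated by a rational function, so the $A$-stability/sectoriality results of Lubich-type theory you cite simply do not apply to it. The paper resolves exactly this point by a specific computation: it groups the product as
\begin{equation*}
\mu(\zeta)\psi_{\alpha}(\zeta)=\frac{4(1-\zeta)}{3+\zeta^{2}}\cdot\frac{(1-\zeta)^{2}\tau^{-\alpha}}{\Gamma(3-\alpha)\,\zeta}\,Li_{\alpha-2}(\zeta),
\end{equation*}
where the factor $(1-\zeta)+(1-\zeta)^{2}/2$ in $\psi_{\alpha}$ cancels against the denominator of $\mu$; then, setting $\zeta=e^{-\mathbf{i}\rho}$ with $\rho\in(0,\pi]$, it shows by an explicit real/imaginary-part computation that $\arg\bigl(\frac{1-\zeta}{3+\zeta^{2}}\bigr)\in[0,\frac{\pi}{2}]$, and imports the nontrivial bound $\arg\bigl(\frac{(1-\zeta)^{2}}{\zeta}Li_{\alpha-2}(\zeta)\bigr)\in[-\frac{\pi}{2},\frac{\alpha\pi}{2})$ from \cite{Wang.2020THTDSfSPwND}. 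Adding these keeps the total argument strictly inside $(-\pi,\pi)$ since $\alpha<1$, and only then does the continuity argument in $z$ (and in $\theta$ near $\frac{\pi}{2}$) finish the proof. Without the polylogarithm bound, or some substitute for it, your compactness step has no way to rule out that the symbol approaches the negative real axis.

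A second, subsidiary error: your claim that $\mu$ is ``close to $1$ on the thin annulus'' is false except near $\zeta=1$. Since $\mu(\zeta)=\frac{4}{(3+\zeta^{2})(3/2-\zeta/2)}$, one has for instance $\mu(-1)=\frac{1}{2}$, and at $\zeta=\pm\mathbf{i}$ the argument of $\mu$ is about $\pm\arctan(1/3)$, not negligible. The annulus is thin in modulus but sweeps the full range of angles, and $\mu$ is close to $1$ only where $\zeta$ is near $1$ (i.e.\ where your small-$|z\tau|$ expansion already applies). What a correct argument needs away from $\zeta=1$ is a quantitative bound on $\arg\mu$ (or the paper's cancellation trick above), not closeness to $1$. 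For the $\overline{SBD}$ symbol your route is closer to viable, since the sectoriality $\delta_{\tau,2}(e^{-z\tau})\in\Sigma_{\frac{\pi}{2}+\epsilon}$ is indeed a citable fact (the paper uses \cite{Jin.2017CohBcqffee}), but there too you must combine $\arg\bigl((\delta_{\tau,2}(e^{-z\tau}))^{\alpha}\bigr)\leq\alpha(\frac{\pi}{2}+\epsilon)$ with an honest bound on $\arg\mu$ rather than treating $\mu$ as a harmless perturbation.
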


\begin{proof}
	For the case $\arg(z)=\frac{\pi}{2}$, setting $\mathbf{i}\rho=z\tau$ with $\rho\in(0,\pi]$, we have
	\begin{equation*}
		\begin{aligned}
			&\frac{1-e^{-\mathbf{i}\rho}}{3+e^{-2\mathbf{i}\rho}}\\
			=&\frac{1-\cos(\rho)+\sin(\rho)\mathbf{i}}{3+\cos(2\rho)-\sin(2\rho)\mathbf{i}}\\
			=&\frac{3-3\cos(\rho)+\cos(2\rho)-\cos(2\rho)\cos(\rho)-\sin(2\rho)\sin(\rho)}{(3+\cos(2\rho))^{2}-(\sin(2\rho))^{2}}\\
			&+\frac{\sin(2\rho)-\sin(2\rho)\cos(\rho)+3\sin(\rho)+\cos(2\rho)\sin(\rho)}{(3+\cos(2\rho))^{2}-(\sin(2\rho))^{2}}\mathbf{i}\\
			=&\frac{2-4\cos(\rho)+2\cos^{2}(\rho)}{(3+\cos(2\rho))^{2}-(\sin(2\rho))^{2}}\\
			&+\frac{2\sin(\rho)\cos(\rho)+2\sin(\rho)}{(3+\cos(2\rho))^{2}-(\sin(2\rho))^{2}}\mathbf{i}.
		\end{aligned}
	\end{equation*}
	It's easy to check that for $\rho\in(0,\pi]$, one has
	\begin{equation}
		\frac{1-e^{-\mathbf{i}\rho}}{3+e^{-2\mathbf{i}\rho}}\in \left[0,\frac{\pi}{2} \right].
	\end{equation}
	From \cite{Wang.2020THTDSfSPwND}, we obtain
	\begin{equation*}
		\arg\left (\frac{(1-e^{-\mathbf{i}\rho})^{2}}{e^{-z\tau}}Li_{\alpha-2}(e^{-\mathbf{i}\rho})\right )\in \left[-\frac{\pi}{2},\frac{\alpha\pi}{2} \right),
	\end{equation*}
	which leads to 
	\begin{equation*}
		\mu(e^{-z\tau})\psi_{\alpha}(e^{-z\tau})\in \Sigma_{\theta_{0}}
	\end{equation*}
	with $\theta_{0}\in(\frac{\pi}{2},\pi)$. As for the case $\arg(z)=-\frac{\pi}{2}$, the similar arguments show that
	\begin{equation*}
		\mu(e^{-z\tau})\psi_{\alpha}(e^{-z\tau})\in \Sigma_{\theta_{0}}.
	\end{equation*}
	According to the definitions of $\mu(\zeta)$ and $\psi_{\alpha}(\zeta)$, we can get the first desired result by using the fact that $\mu(e^{-z\tau})\psi_{\alpha}(e^{-z\tau})$ depends on $z$ continuously. As for the second estimate, following the above argument and using the fact $\delta_{\tau,2}(e^{-z\tau})\in \Sigma_{\frac{\pi}{2}+\epsilon}$ for $z\in \Gamma^{\tau}_{\theta}$ with $\theta$ being sufficiently close to $\frac{\pi}{2}$ \cite{Jin.2017CohBcqffee}, one can get it.
\end{proof}
%\begin{proof}
%	Let $z\tau=z_{Re}+z_{Im}\mathbf{i}$ with $z_{Re}$ and $z_{Im}$ being the real and image parts of $z\tau$. Then there holds
%	\begin{equation*}
	%		\begin{aligned}
		%			&\frac{1-e^{-z\tau}}{3+e^{-2z\tau}}\\
		%			=&\frac{1-e^{-z_{Re}}\cos(z_{Im})+e^{-z_{Re}}\sin(z_{Im})\mathbf{i}}{3+e^{-2z_{Re}}\cos(2z_{Im})-e^{-2z_{Re}}\sin(2z_{Im})\mathbf{i}}\\
		%			=&\frac{3-3e^{-z_{Re}}\cos(z_{Im})+e^{-2z_{Re}}\cos(2z_{Im})-e^{-3z_{Re}}\cos(2z_{Im})\cos(z_{Im})-e^{-3z_{Re}}\sin(2z_{Im})\sin(z_{Im})}{(3+e^{-2z_{Re}}\cos(2z_{Im}))^{2}+(e^{-2z_{Re}}\sin(2z_{Im}))^{2}}\\
		%			&+\frac{e^{-2z_{Re}}\sin(2z_{Im})-e^{-3z_{Re}}\sin(2z_{Im})\cos(z_{Im})+3e^{-z_{Re}}\sin(z_{Im})+e^{-3z_{Re}}\cos(2z_{Im})\sin(z_{Im})}{(3+e^{-2z_{Re}}\cos(2z_{Im}))^{2}+(e^{-2z_{Re}}\sin(2z_{Im}))^{2}}\mathbf{i}\\
		%			=&\frac{3-3e^{-z_{Re}}\cos(z_{Im})+e^{-2z_{Re}}\cos(2z_{Im})-e^{-3z_{Re}}\cos(z_{Im})}{(3+e^{-2z_{Re}}\cos(2z_{Im}))^{2}+(e^{-2z_{Re}}\sin(2z_{Im}))^{2}}\\
		%			&+\frac{e^{-2z_{Re}}\sin(2z_{Im})-e^{-3z_{Re}}\sin(2z_{Im})\cos(z_{Im})+3e^{-z_{Re}}\sin(z_{Im})+e^{-3z_{Re}}\cos(2z_{Im})\sin(z_{Im})}{(3+e^{-2z_{Re}}\cos(2z_{Im}))^{2}+(e^{-2z_{Re}}\sin(2z_{Im}))^{2}}\mathbf{i}
		%		\end{aligned}
	%	\end{equation*}
%
%\end{proof}

Then we provide the error estimate when using $\overline{L1}$ scheme to solve homogeneous problem.
\begin{theorem}\label{thmfullerrorl1f0}
	Let $\mathbf{u}_{h}$ and $\mathbf{u}^{n}_{1,h}$ be the solutions of Eqs. \eqref{eqsemischl1} and \eqref{eqfullschl1}, respectively. Assume $f=0$ and $\mathbf{u}^{0}\in l^{2}$. Then it holds
	\begin{equation*}
		\|\mathbf{u}_{h}(t_{n})-\mathbf{u}^{n}_{1,h}\|_{l^{2}}\leq C\tau^{2}t_{n}^{-2}\|\mathbf{u}^{0}\|_{l^{2}}.
	\end{equation*}
\end{theorem}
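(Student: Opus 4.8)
The plan is to compare the two Dunford--type contour representations of the solutions on a common contour. First I would pass to the shifted quantities $\mathbf{v}_h(t):=\mathbf{u}_h(t)-\mathbf{u}^{0}$ and $\mathbf{v}^{n}_{1,h}=\mathbf{u}^{n}_{1,h}-\mathbf{u}^{0}$, so that $\mathbf{u}_h(t_n)-\mathbf{u}^{n}_{1,h}=\mathbf{v}_h(t_n)-\mathbf{v}^{n}_{1,h}$ and the constant $\mathbf{u}^0$ drops out. Using $\mathbf{u}^{0}=\frac{1}{2\pi\mathbf{i}}\int_{\Gamma_{\theta}}e^{zt_n}z^{-1}\mathbf{u}^{0}\,dz$ together with \eqref{eqsemischl1Msol0} (with $f=0$) and the algebraic identity $z^{\alpha-1}(z^{\alpha}+\mathbf{A})^{-1}-z^{-1}=-z^{-1}\mathbf{A}(z^{\alpha}+\mathbf{A})^{-1}$, the semi-discrete part becomes $\mathbf{v}_h(t_n)=-\frac{1}{2\pi\mathbf{i}}\int_{\Gamma_{\theta}}e^{zt_n}K(z)\mathbf{u}^{0}\,dz$ with $K(z)=z^{-1}\mathbf{A}(z^{\alpha}+\mathbf{A})^{-1}$, while \eqref{equal1sol} with $f=0$ gives $\mathbf{v}^{n}_{1,h}=-\frac{1}{2\pi\mathbf{i}}\int_{\Gamma^{\tau}_{\theta}}e^{zt_n}K_{\tau}(z)\mathbf{u}^{0}\,dz$ with $K_{\tau}(z)=\tau\eta_{1}\mathbf{A}(\mu\psi_{\alpha}+\mathbf{A})^{-1}$, where $\mu,\psi_{\alpha},\eta_{1}$ are evaluated at $e^{-z\tau}$. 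Splitting $\Gamma_{\theta}=\Gamma^{\tau}_{\theta}\cup(\Gamma_{\theta}\setminus\Gamma^{\tau}_{\theta})$ then writes the error as a tail integral of $K$ over $\Gamma_{\theta}\setminus\Gamma^{\tau}_{\theta}$ plus an integral of the symbol difference $K(z)-K_{\tau}(z)$ over $\Gamma^{\tau}_{\theta}$.

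For the tail, I would use that $\mathbf{A}$ is symmetric positive definite and that $z^{\alpha}$ stays in a sector $\Sigma_{\theta_{0}}$ with $\theta_{0}<\pi$ for $z\in\Gamma_{\theta}$, so by the standard resolvent bounds \cite{Lubich.1996Ndeefaoaeewapmt} one has $\|\mathbf{A}(z^{\alpha}+\mathbf{A})^{-1}\|\le C$ in the $l^{2}$-induced operator norm and hence $\|K(z)\|\le C|z|^{-1}$. On $\Gamma_{\theta}\setminus\Gamma^{\tau}_{\theta}$ one has $|z|\ge \pi/(\tau\sin\theta)$, whence $|z|^{-1}\le C\tau^{2}|z|$; this trades the slow decay for two powers of $\tau$ and gives $\|\int_{\Gamma_{\theta}\setminus\Gamma^{\tau}_{\theta}}e^{zt_n}K(z)\mathbf{u}^{0}\,dz\|\le C\tau^{2}\int_{\Gamma_{\theta}}|e^{zt_n}||z|\,|dz|\,\|\mathbf{u}^{0}\|_{l^{2}}\le C\tau^{2}t_n^{-2}\|\mathbf{u}^{0}\|_{l^{2}}$, where the last step parametrizes $z=re^{\pm\mathbf{i}\theta}$ with $\cos\theta<0$ and uses $\int_{0}^{\infty}e^{-crt_n}r\,dr=Ct_n^{-2}$.

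For the symbol difference I would first record the discrete resolvent bounds on $\Gamma^{\tau}_{\theta}$: by Lemma \ref{lemtheta} the symbol $\mu\psi_{\alpha}$ lies in $\Sigma_{\theta_{0}}$, and by Lemma \ref{lempsi} one has $C^{-1}|z|^{\alpha}\le|\psi_{\alpha}|\le C|z|^{\alpha}$ (with $|\mu|$ bounded above and below), so that $\|(\mu\psi_{\alpha}+\mathbf{A})^{-1}\|\le C|z|^{-\alpha}$ and $\|\mathbf{A}(\mu\psi_{\alpha}+\mathbf{A})^{-1}\|\le C$. Then I would split $K-K_{\tau}=(z^{-1}-\tau\eta_{1})\mathbf{A}(z^{\alpha}+\mathbf{A})^{-1}+\tau\eta_{1}\mathbf{A}\bigl[(z^{\alpha}+\mathbf{A})^{-1}-(\mu\psi_{\alpha}+\mathbf{A})^{-1}\bigr]$ and treat the resolvent difference by the identity $(z^{\alpha}+\mathbf{A})^{-1}-(\mu\psi_{\alpha}+\mathbf{A})^{-1}=(\mu\psi_{\alpha}-z^{\alpha})(z^{\alpha}+\mathbf{A})^{-1}(\mu\psi_{\alpha}+\mathbf{A})^{-1}$. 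The first summand is $\le C|z|\tau^{2}$ by the second estimate of Lemma \ref{lemueta} and $\|\mathbf{A}(z^{\alpha}+\mathbf{A})^{-1}\|\le C$. For the second summand, Lemmas \ref{lempsi} and \ref{lemueta} give $|\mu\psi_{\alpha}-z^{\alpha}|\le C(|z|^{\alpha+3}\tau^{3}+|z|^{\alpha+2}\tau^{2})\le C|z|^{\alpha+2}\tau^{2}$ (since $|z|\tau\le C$ on $\Gamma^{\tau}_{\theta}$) and $|\tau\eta_{1}|\le C|z|^{-1}$; multiplying by $\|\mathbf{A}(\mu\psi_{\alpha}+\mathbf{A})^{-1}\|\le C$ and $\|(z^{\alpha}+\mathbf{A})^{-1}\|\le C|z|^{-\alpha}$ again yields $\le C|z|\tau^{2}$. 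Hence $\|K(z)-K_{\tau}(z)\|\le C|z|\tau^{2}$ on $\Gamma^{\tau}_{\theta}$, and integrating exactly as in the tail estimate produces $C\tau^{2}t_n^{-2}\|\mathbf{u}^{0}\|_{l^{2}}$. Adding the two contributions gives the claim.

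The main obstacle is obtaining the precise single power $\|K-K_{\tau}\|\le C|z|\tau^{2}$. Both constituent estimates begin from symbol errors carrying higher powers of $|z|$ (namely $|z|^{\alpha+2}\tau^{2}$), and it is only after cancellation against the two resolvent factors $|z|^{-1}$ and $|z|^{-\alpha}$ that exactly one power of $|z|$ survives; this cancellation is legitimate precisely because $\mu\psi_{\alpha}$ remains sectorial and comparable to $z^{\alpha}$ \emph{uniformly} on the truncated contour $\Gamma^{\tau}_{\theta}$ (Lemmas \ref{lemtheta} and \ref{lempsi}), not merely for small $|z\tau|$. That surviving single power is what the integral $\int_{\Gamma_{\theta}}|e^{zt_n}||z|\,|dz|\sim t_n^{-2}$ converts into the sharp temporal order $\tau^{2}t_n^{-2}$, so that any looser bookkeeping of the $|z|$-powers would either spoil the $\tau^{2}$ rate or worsen the singularity in $t_n$.
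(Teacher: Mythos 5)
Your proposal is correct and follows essentially the same route as the paper's proof: shift by $\mathbf{u}^{0}$, compare the two contour representations, bound the tail integral over $\Gamma_{\theta}\setminus\Gamma^{\tau}_{\theta}$ via $|z|^{-1}\leq C\tau^{2}|z|$, and split the symbol difference on $\Gamma^{\tau}_{\theta}$ into exactly the paper's $\vartheta_{1}$ (replace $z^{-1}$ by $\tau\eta_{1}$) and $\vartheta_{2}$ (replace $z^{\alpha}$ by $\mu\psi_{\alpha}$) terms, each bounded by $C|z|\tau^{2}$ using Lemmas \ref{lempsi}, \ref{lemueta}, \ref{lemtheta} and the resolvent estimate. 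The only cosmetic difference is that where the paper invokes the mean value theorem for $\vartheta_{2}$, you make the same estimate explicit through the resolvent identity $(z^{\alpha}+\mathbf{A})^{-1}-(\mu\psi_{\alpha}+\mathbf{A})^{-1}=(\mu\psi_{\alpha}-z^{\alpha})(z^{\alpha}+\mathbf{A})^{-1}(\mu\psi_{\alpha}+\mathbf{A})^{-1}$, which is a fair (indeed cleaner) rendering of the same step.
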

\begin{proof}
	Let $\mathbf{v}_{h}(t)=\mathbf{u}_{h}(t)-\mathbf{u}^{0}$. Simple calculations show
	\begin{equation*}
		\mathbf{v}_{h}(t)=-\frac{1}{2\pi\mathbf{i}}\int_{\Gamma_{\theta}}e^{zt}(z^{\alpha}+\mathbf{A})^{-1}z^{-1}\mathbf{A}\mathbf{u}^{0}dz.
	\end{equation*}
	To give the error estimate between $\mathbf{u}_{h}(t_{n})$ and $\mathbf{u}^{n}_{1,h}$, we just need to estimate $\|\mathbf{v}_{h}(t_{n})-\mathbf{v}^{n}_{1,h}\|_{l^{2}}$, i.e.,
	\begin{equation*}
		\begin{aligned}
			&\|\mathbf{v}_{h}(t_{n})-\mathbf{v}^{n}_{1,h}\|_{l^{2}}\\
			\leq&C\Bigg\|\int_{\Gamma_{\theta}\backslash\Gamma_{\theta}^{\tau}}e^{zt_{n}}(z^{\alpha}+\mathbf{A})^{-1}z^{-1}\mathbf{A}\mathbf{u}^{0}dz\Bigg\|_{l^{2}}\\
			&+C\Bigg\|\int_{\Gamma_{\theta}^{\tau}}e^{zt_{n}}((z^{\alpha}+\mathbf{A})^{-1}z^{-1}-(\mu(e^{-z\tau})\psi_{\alpha}(e^{-z\tau})+\mathbf{A})^{-1}\mu(e^{-z\tau})\eta(e^{-z\tau}))\mathbf{A}\mathbf{u}^{0}dz\Bigg\|_{l^{2}}\\
			\leq&\uppercase\expandafter{\romannumeral1}+\uppercase\expandafter{\romannumeral2}.
		\end{aligned}
	\end{equation*}
	For $\uppercase\expandafter{\romannumeral1}$, one has
	\begin{equation*}
		\begin{aligned}
			\uppercase\expandafter{\romannumeral1}\leq&C\tau^{2}\int_{\Gamma_{\theta}\backslash\Gamma_{\theta}^{\tau}}|e^{zt_{n}}|\|\mathbf{A}(z^{\alpha}+\mathbf{A})^{-1}z\|_{l^{2}\rightarrow l^{2}}|dz|\|\mathbf{u}^{0}\|_{l^{2}}\\
			\leq& C\tau^{2}t_{n}^{-2}\|\mathbf{u}^{0}\|_{l^{2}}.
		\end{aligned}
	\end{equation*}
	As for $\uppercase\expandafter{\romannumeral2}$, we need to consider the following estimate
	\begin{equation*}
		\begin{aligned}
			&\|((z^{\alpha}+\mathbf{A})^{-1}z^{-1}-(\mu(e^{-z\tau})\psi_{\alpha}(e^{-z\tau})+\mathbf{A})^{-1}\eta_{1}(e^{-z\tau}))\mathbf{A}\|_{l^{2}\rightarrow l^{2}}\\
			\leq&\|((z^{\alpha}+\mathbf{A})^{-1}z^{-1}-(z^{\alpha}+\mathbf{A})^{-1}\eta_{1}(e^{-z\tau}))\mathbf{A}\|_{l^{2}\rightarrow l^{2}}\\
			&+\|((z^{\alpha}+\mathbf{A})^{-1}\eta_{1}(e^{-z\tau})-(\mu(e^{-z\tau})\psi_{\alpha}(e^{-z\tau})+\mathbf{A})^{-1}\eta_{1}(e^{-z\tau}))\mathbf{A}\|_{l^{2}\rightarrow l^{2}}\\
			\leq&\|\vartheta_{1}\|_{l^{2}\rightarrow l^{2}}+\|\vartheta_{2}\|_{l^{2}\rightarrow l^{2}}.
		\end{aligned}
	\end{equation*}
	According to Lemmas \ref{lemueta}, \ref{lemtheta}, and the resolvent estimate $\|(z+\mathbf{A})^{-1}\|_{l^{2}\rightarrow l^{2}}\leq C|z|^{-1}$ for $z\in\Sigma_{\theta}$ \cite{Lubich.1996Ndeefaoaeewapmt}, there exists 
	\begin{equation*}
		\|\vartheta_{1}\|_{l^{2}\rightarrow l^{2}}\leq C\tau^{2}|z|.
	\end{equation*}
	Lemmas \ref{lempsi}, \ref{lemueta}, \ref{lemtheta} and the mean value theorem lead to
	\begin{equation*}
		\|\vartheta_{2}\|_{l^{2}\rightarrow l^{2}}\leq C\tau^{2}|z|.
	\end{equation*}
	Thus
	\begin{equation*}
		\begin{aligned}
			\uppercase\expandafter{\romannumeral2}\leq&C\tau^{2}\int_{\Gamma_{\theta}\backslash\Gamma_{\theta}^{\tau}}|e^{zt_{n}}||z||dz|\|\mathbf{u}^{0}\|_{l^{2}}\\
			\leq& C\tau^{2}t_{n}^{-2}\|\mathbf{u}^{0}\|_{l^{2}}.
		\end{aligned}
	\end{equation*}
	Then the predicted result can be reached by combining the above estimates.
\end{proof}

Similar to the proofs of Theorem \ref{thmfullerrorl1f0},  the error estimate of the $\overline{SBD}$ scheme for solving homogeneous problem can be obtained.
\begin{theorem}\label{thmfullerrorSBDf0}
	Let $\mathbf{u}_{h}$ and $\mathbf{u}^{n}_{2,h}$ be the solutions of Eqs. \eqref{eqsemischl1} and \eqref{eqfullschsbd}, respectively. Assume $f=0$ and $\mathbf{u}^{0}\in l^{2}$. Then it holds
	\begin{equation*}
		\|\mathbf{u}_{h}(t_{n})-\mathbf{u}^{n}_{2,h}\|_{l^{2}}\leq C\tau^{2}t_{n}^{-2}\|\mathbf{u}^{0}\|_{l^{2}}.
	\end{equation*}
\end{theorem}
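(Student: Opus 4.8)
The plan is to mirror the proof of Theorem \ref{thmfullerrorl1f0} line by line, the only change being that the discrete time symbol $\psi_{\alpha}(e^{-z\tau})$ of the $\overline{L1}$ scheme is replaced by the $\overline{SBD}$ symbol $(\delta_{\tau,2}(e^{-z\tau}))^{\alpha}$. First I would set $\mathbf{v}_{h}(t)=\mathbf{u}_{h}(t)-\mathbf{u}^{0}$ and $\mathbf{v}^{n}_{2,h}=\mathbf{u}^{n}_{2,h}-\mathbf{u}^{0}$; with $f=0$ the first admits the representation $\mathbf{v}_{h}(t_{n})=-\frac{1}{2\pi\mathbf{i}}\int_{\Gamma_{\theta}}e^{zt_{n}}(z^{\alpha}+\mathbf{A})^{-1}z^{-1}\mathbf{A}\mathbf{u}^{0}\,dz$, while $\mathbf{v}^{n}_{2,h}$ is given by \eqref{equasbdsol} with $f=0$. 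Subtracting and using $\Gamma_{\theta}^{\tau}$ to truncate the contour, I would bound $\|\mathbf{v}_{h}(t_{n})-\mathbf{v}^{n}_{2,h}\|_{l^{2}}$ by a tail term $\mathrm{I}$ over $\Gamma_{\theta}\backslash\Gamma_{\theta}^{\tau}$ plus a symbol-difference term $\mathrm{II}$ over $\Gamma_{\theta}^{\tau}$, exactly as for the $\overline{L1}$ scheme.

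The tail term $\mathrm{I}$ depends only on the semidiscrete solution and is therefore unchanged from Theorem \ref{thmfullerrorl1f0}: factoring $(z^{\alpha}+\mathbf{A})^{-1}z^{-1}\mathbf{A}=\tau^{2}\,\mathbf{A}(z^{\alpha}+\mathbf{A})^{-1}z\cdot(\tau^{-2}z^{-2})$, using $|\tau^{-2}z^{-2}|\leq C$ on $\Gamma_{\theta}\backslash\Gamma_{\theta}^{\tau}$ and $\|\mathbf{A}(z^{\alpha}+\mathbf{A})^{-1}\|_{l^{2}\to l^{2}}\leq C$ gives $\mathrm{I}\leq C\tau^{2}\int_{\Gamma_{\theta}}|e^{zt_{n}}|\,|z|\,|dz|\,\|\mathbf{u}^{0}\|_{l^{2}}\leq C\tau^{2}t_{n}^{-2}\|\mathbf{u}^{0}\|_{l^{2}}$. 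For $\mathrm{II}$ I would split the symbol difference $((z^{\alpha}+\mathbf{A})^{-1}z^{-1}-(\mu(e^{-z\tau})(\delta_{\tau,2}(e^{-z\tau}))^{\alpha}+\mathbf{A})^{-1}\tau\eta_{1}(e^{-z\tau}))\mathbf{A}$ into $\vartheta_{1}+\vartheta_{2}$ as before. The term $\vartheta_{1}=(z^{\alpha}+\mathbf{A})^{-1}(z^{-1}-\tau\eta_{1}(e^{-z\tau}))\mathbf{A}$ involves no time symbol and is again bounded by $C\tau^{2}|z|$ via Lemma \ref{lemueta} and the resolvent estimate.

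The main obstacle, and the only genuinely new ingredient, is the consistency estimate for the $\overline{SBD}$ symbol, i.e.\ the analogue of Lemma \ref{lempsi}: for $z\in\Gamma_{\theta}^{\tau}$,
\begin{equation*}
	|(\delta_{\tau,2}(e^{-z\tau}))^{\alpha}-z^{\alpha}|\leq C|z|^{\alpha+2}\tau^{2},\qquad C^{-1}|z|^{\alpha}\leq|(\delta_{\tau,2}(e^{-z\tau}))^{\alpha}|\leq C|z|^{\alpha}.
\end{equation*}
This I would establish by the same Taylor-expansion device used for Lemma \ref{lempsi}: from $(1-e^{-z\tau})+(1-e^{-z\tau})^{2}/2=z\tau-(z\tau)^{3}/3+\mathcal{O}((z\tau)^{4})$ one gets $\delta_{\tau,2}(e^{-z\tau})=z+\mathcal{O}(z^{3}\tau^{2})$, and raising to the power $\alpha$ yields the displayed bounds. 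Combining with $|\mu(e^{-z\tau})-1|\leq C|z|^{3}\tau^{3}$ from Lemma \ref{lemueta} and $|z|\tau\leq C$ on $\Gamma_{\theta}^{\tau}$ gives $|\mu(e^{-z\tau})(\delta_{\tau,2}(e^{-z\tau}))^{\alpha}-z^{\alpha}|\leq C|z|^{\alpha+2}\tau^{2}$, while Lemma \ref{lemtheta} already guarantees $\mu(e^{-z\tau})(\delta_{\tau,2}(e^{-z\tau}))^{\alpha}\in\Sigma_{\theta_{0}}$ and hence $\|\mathbf{A}(\mu(e^{-z\tau})(\delta_{\tau,2}(e^{-z\tau}))^{\alpha}+\mathbf{A})^{-1}\|_{l^{2}\to l^{2}}\leq C$. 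Applying the resolvent identity to $\vartheta_{2}$ together with $\|(z^{\alpha}+\mathbf{A})^{-1}\|_{l^{2}\to l^{2}}\leq C|z|^{-\alpha}$ and $|\tau\eta_{1}(e^{-z\tau})|\leq C|z|^{-1}$ then gives $\|\vartheta_{2}\|_{l^{2}\to l^{2}}\leq C\tau^{2}|z|$, whence $\mathrm{II}\leq C\tau^{2}\int_{\Gamma_{\theta}^{\tau}}|e^{zt_{n}}|\,|z|\,|dz|\,\|\mathbf{u}^{0}\|_{l^{2}}\leq C\tau^{2}t_{n}^{-2}\|\mathbf{u}^{0}\|_{l^{2}}$. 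Collecting $\mathrm{I}$ and $\mathrm{II}$ yields the claim.
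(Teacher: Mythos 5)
Your proposal is correct and follows exactly the route the paper intends: the paper gives no separate proof of this theorem, stating only that it follows ``similar to the proof of Theorem \ref{thmfullerrorl1f0}'', and your argument is precisely that proof with $\psi_{\alpha}(e^{-z\tau})$ replaced by $(\delta_{\tau,2}(e^{-z\tau}))^{\alpha}$. You even make explicit the one ingredient the paper leaves implicit---the $\overline{SBD}$ analogue of Lemma \ref{lempsi}, i.e.\ $|(\delta_{\tau,2}(e^{-z\tau}))^{\alpha}-z^{\alpha}|\leq C|z|^{\alpha+2}\tau^{2}$ together with the two-sided bound, obtained by the same Taylor expansion and combined with the sector property from Lemma \ref{lemtheta}---which is exactly what is needed to close the estimate of $\vartheta_{2}$.
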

Now we propose the error estimate of the $\overline{L1}$ scheme for inhomogeneous problem.
\begin{theorem}\label{thmfullerrorl1u0}
	Let $\mathbf{u}_{h}$ and $\mathbf{u}^{n}_{1,h}$ be the solutions of Eqs. \eqref{eqsemischl1} and \eqref{eqfullschl1}, respectively. Assume  $u_{0}=0$, $\|\mathbf{f}^{0}\|_{l^{2}}<\infty$, $\|\mathbf{f}^{0}_{t}\|_{l^{2}}<\infty$, and $\int_{0}^{t_{n}}(t_{n}-s)^{\alpha-1}\|\mathbf{f}_{tt}(s)\|_{l^{2}}ds<\infty$. Then we have
	\begin{equation*}
		\|\mathbf{u}_{h}(t_{n})-\mathbf{u}^{n}_{h,1}\|_{l^{2}}\leq C\tau^{2}\left(t_{n}^{\alpha-2}\|\mathbf{f}^{0}\|_{l^{2}}+t_{n}^{\alpha-1}\|\mathbf{f}^{0}_{t}\|_{l^{2}}+\int_{0}^{t_{n}}(t_{n}-s)^{\alpha-1}\|\mathbf{f}_{tt}(s)\|_{l^{2}}ds\right).
	\end{equation*}
	Here $\mathbf{f}_{t}^{0}=\partial_{t}\mathbf{f}|_{t=0}$ and $\mathbf{f}_{tt}$ means the second derivative of $\mathbf{f}$ about $t$.
\end{theorem}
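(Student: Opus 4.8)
The plan is to reuse the contour-integral framework of Theorem~\ref{thmfullerrorl1f0}, but now with the source term driving the error. Since $u_{0}=0$ we have $\mathbf{u}^{0}=0$, so by \eqref{eqsemischl1Msol0} the semi-discrete solution reduces to $\mathbf{u}_{h}(t_{n})=\frac{1}{2\pi\mathbf{i}}\int_{\Gamma_{\theta}}e^{zt_{n}}(z^{\alpha}+\mathbf{A})^{-1}\tilde{\mathbf{f}}\,dz$, while by \eqref{equal1sol} the fully discrete one reduces to $\mathbf{u}^{n}_{1,h}=\frac{\tau}{2\pi\mathbf{i}}\int_{\Gamma_{\theta}^{\tau}}e^{zt_{n}}(\mu(e^{-z\tau})\psi_{\alpha}(e^{-z\tau})+\mathbf{A})^{-1}\eta_{2}(e^{-z\tau})\big(\sum_{j=1}^{\infty}\mathbf{f}^{j}e^{-zt_{j}}+\frac{\mathbf{f}^{0}e^{-z\tau}}{1+e^{-z\tau}}\big)\,dz$. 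As in Theorem~\ref{thmfullerrorl1f0}, I would first split the error into a tail part $\mathrm{I}$, in which the semi-discrete integral is taken over $\Gamma_{\theta}\backslash\Gamma_{\theta}^{\tau}$, and a kernel-comparison part $\mathrm{II}$ over $\Gamma_{\theta}^{\tau}$.

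To produce the three terms on the right-hand side, I would insert the Taylor expansion with integral remainder $\mathbf{f}(t)=\mathbf{f}^{0}+t\mathbf{f}^{0}_{t}+\int_{0}^{t}(t-s)\mathbf{f}_{tt}(s)\,ds$. On the continuous side this gives $\tilde{\mathbf{f}}(z)=z^{-1}\mathbf{f}^{0}+z^{-2}\mathbf{f}^{0}_{t}+z^{-2}\widetilde{\mathbf{f}_{tt}}(z)$. On the discrete side the same substitution into $\mathbf{f}^{j}=\mathbf{f}(t_{j})$, together with the geometric sums $\sum_{j\ge1}e^{-zt_{j}}=\frac{e^{-z\tau}}{1-e^{-z\tau}}$ and $\sum_{j\ge1}t_{j}e^{-zt_{j}}=\tau\frac{e^{-z\tau}}{(1-e^{-z\tau})^{2}}$, turns the discrete data into $\mathbf{f}^{0}$ times $\frac{e^{-z\tau}}{1-e^{-z\tau}}+\frac{e^{-z\tau}}{1+e^{-z\tau}}$, $\mathbf{f}^{0}_{t}$ times $\tau\frac{e^{-z\tau}}{(1-e^{-z\tau})^{2}}$, and a discrete convolution carrying $\mathbf{f}_{tt}$. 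The crucial point is that after multiplication by $\tau\eta_{2}(e^{-z\tau})$ these multipliers are exactly the quantities whose distances from $z^{-1}$ and $z^{-2}$ are bounded by $C|z|\tau^{2}$ and $C\tau^{2}$ in Lemma~\ref{lemueta}, so the discretization of the data contributes only at order $\tau^{2}$.

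For $\mathrm{II}$ I would treat each Taylor piece separately and split its integrand, via a second-resolvent identity, into a resolvent-difference contribution and a data-multiplier contribution. Using $(\mu\psi_{\alpha}+\mathbf{A})^{-1}-(z^{\alpha}+\mathbf{A})^{-1}=(z^{\alpha}+\mathbf{A})^{-1}(z^{\alpha}-\mu\psi_{\alpha})(\mu\psi_{\alpha}+\mathbf{A})^{-1}$ together with $|\psi_{\alpha}(e^{-z\tau})-z^{\alpha}|\le C|z|^{\alpha+2}\tau^{2}$ and $|\mu(e^{-z\tau})-1|\le C|z|^{3}\tau^{3}$ from Lemmas~\ref{lempsi} and~\ref{lemueta}, the sectoriality of $\mu\psi_{\alpha}$ from Lemma~\ref{lemtheta}, and the resolvent bound $\|(w+\mathbf{A})^{-1}\|_{l^{2}\to l^{2}}\le C|w|^{-1}$ \cite{Lubich.1996Ndeefaoaeewapmt} with $w=z^{\alpha}$ and $w=\mu\psi_{\alpha}$ (so that $|w|\sim|z|^{\alpha}$ on $\Gamma_{\theta}^{\tau}$), the resolvent difference is of size $C\tau^{2}|z|^{2-\alpha}$. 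Combined with the data multipliers this yields integrands of order $\tau^{2}|z|^{1-\alpha}$ for the $\mathbf{f}^{0}$ piece and $\tau^{2}|z|^{-\alpha}$ for the $\mathbf{f}^{0}_{t}$ piece, and the elementary bounds $\int_{\Gamma_{\theta}}|e^{zt_{n}}||z|^{1-\alpha}|dz|\le Ct_{n}^{\alpha-2}$ and $\int_{\Gamma_{\theta}}|e^{zt_{n}}||z|^{-\alpha}|dz|\le Ct_{n}^{\alpha-1}$ give the first two terms. The tail $\mathrm{I}$ is handled by extracting $|z|^{-2}\le C\tau^{2}$ on $\Gamma_{\theta}\backslash\Gamma_{\theta}^{\tau}$ and integrating the remaining integrable power over the full contour, which reproduces the same $\tau^{2}t_{n}^{\alpha-2}$ and $\tau^{2}t_{n}^{\alpha-1}$ orders.

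The main obstacle will be the remainder piece carrying $\mathbf{f}_{tt}$, since it is a convolution rather than an explicit power of $z$. Here I would retain the convolution structure: the kernel-difference analysis above shows that the frequency-domain error operator acting on $z^{-2}\widetilde{\mathbf{f}_{tt}}(z)$ is of order $\tau^{2}|z|^{-\alpha}$, whose inverse transform is a time-domain kernel bounded by $C\tau^{2}t^{\alpha-1}$; interchanging the $s$-integration with the contour integral, and aligning the discrete convolution nodes $t_{j}$ with the continuous variable $s$ via the third estimate of Lemma~\ref{lemueta}, then produces $C\tau^{2}\int_{0}^{t_{n}}(t_{n}-s)^{\alpha-1}\|\mathbf{f}_{tt}(s)\|_{l^{2}}\,ds$. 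The care needed is in matching the discrete and continuous convolutions and in checking that the associated tail over $\Gamma_{\theta}\backslash\Gamma_{\theta}^{\tau}$ contributes only at the same $\tau^{2}$ order; collecting $\mathrm{I}$ and the three pieces of $\mathrm{II}$ then gives the claimed bound.
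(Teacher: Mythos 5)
Your proposal is correct and follows essentially the same route as the paper's proof: the same Taylor decomposition $\mathbf{f}(t)=\mathbf{f}^{0}+t\mathbf{f}^{0}_{t}+\int_{0}^{t}(t-s)\mathbf{f}_{tt}(s)\,ds$, the same geometric-sum identities paired with the third and fourth multiplier estimates of Lemma \ref{lemueta}, the same second-resolvent comparison driven by Lemmas \ref{lempsi} and \ref{lemtheta} giving the $\tau^{2}|z|^{1-\alpha}$ and $\tau^{2}|z|^{-\alpha}$ integrands, and the same convolution treatment of the remainder, which the paper formalizes by introducing $\mathcal{E}(t)$ and $\mathcal{E}_{\tau}=\sum_{k}\mathcal{E}^{k}_{\tau}\delta_{t_{k}}$, writing both remainder terms as $((\mathcal{E}\ast g)\ast\mathbf{f}_{tt})(t_{n})$ and $((\mathcal{E}_{\tau}\ast g)\ast\mathbf{f}_{tt})(t_{n})$ with $g(t)=t$, and extending the kernel-difference bound $C\tau^{2}t_{n}^{\alpha-1}$ to off-grid times $t\in(t_{n-1},t_{n})$ via a Taylor expansion. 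The single step you flag as needing care---matching the discrete and continuous convolutions---is precisely where that $\mathcal{E}_{\tau}$ device does the work, and your outline is otherwise sound.
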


\begin{proof}
	Using $f(t)=f(0)+tf_{t}(0)+R(t)$ with $R(t)=\int_{0}^{t}(t-s)f_{tt}(s)ds$, we can rewrite \eqref{equal1sol} as
	\begin{equation*}
		\begin{aligned}		\mathbf{v}^{n}_{1,h}=&\frac{\tau}{2\pi\mathbf{i}}\int_{\Gamma_{\theta}^{\tau}}e^{zt_{n}}(\mu(e^{-z\tau})\psi_{\alpha}(e^{-z\tau})+\mathbf{A})^{-1}\eta_{2}(e^{-z\tau})\left(\sum_{j=1}^{\infty}\mathbf{f}^{0}e^{-zt_{j}}+\frac{e^{-z\tau}}{1+e^{-z\tau}}\mathbf{f}^{0}\right)dz\\
			&+\frac{\tau}{2\pi\mathbf{i}}\int_{\Gamma_{\theta}^{\tau}}e^{zt_{n}}(\mu(e^{-z\tau})\psi_{\alpha}(e^{-z\tau})+\mathbf{A})^{-1}\eta_{2}(e^{-z\tau})\sum_{j=1}^{\infty}\mathbf{f}^{0}_{t}t_{j}e^{-zt_{j}}dz\\
			&+\frac{\tau}{2\pi\mathbf{i}}\int_{\Gamma_{\theta}^{\tau}}e^{zt_{n}}(\mu(e^{-z\tau})\psi_{\alpha}(e^{-z\tau})+\mathbf{A})^{-1}\eta_{2}(e^{-z\tau})\sum_{j=1}^{\infty}R(t_{j})e^{-zt_{j}}dz.
		\end{aligned}
	\end{equation*}
	Similarly, one has
	\begin{equation*}
		\begin{aligned}
			\mathbf{v}_{h}(t)=&\frac{1}{2\pi\mathbf{i}}\int_{\Gamma_{\theta}}e^{zt}(z^{\alpha}+\mathbf{A})^{-1}z^{-1}\mathbf{f}^{0}dz\\
			&+\frac{1}{2\pi\mathbf{i}}\int_{\Gamma_{\theta}}e^{zt}(z^{\alpha}+\mathbf{A})^{-1}z^{-2}\mathbf{f}^{0}_{t}dz\\
			&+\frac{1}{2\pi\mathbf{i}}\int_{\Gamma_{\theta}}e^{zt}(z^{\alpha}+\mathbf{A})^{-1}z^{-2}\tilde{\mathbf{f}}_{tt}dz.
		\end{aligned}
	\end{equation*}
	Thus
	\begin{equation*}
		\begin{aligned}
			&\|\mathbf{v}_{h}(t_{n})-\mathbf{v}^{n}_{1,h}\|_{l^{2}}\\
			\leq& C\Bigg \|\int_{\Gamma_{\theta}}e^{zt_{n}}(z^{\alpha}+\mathbf{A})^{-1}z^{-1}\mathbf{f}^{0}dz\\
			&-\tau\int_{\Gamma_{\theta}^{\tau}}e^{zt_{n}}(\mu(e^{-z\tau})\psi_{\alpha}(e^{-z\tau})+\mathbf{A})^{-1}\eta_{2}(e^{-z\tau})\left(\sum_{j=1}^{\infty}\mathbf{f}^{0}e^{-zt_{j}}+\frac{e^{-z\tau}}{1+e^{-z\tau}}\mathbf{f}^{0}\right)dz\Bigg \|_{l^{2}}\\
			& +C\left \|\int_{\Gamma_{\theta}}e^{zt_{n}}(z^{\alpha}+\mathbf{A})^{-1}z^{-2}\mathbf{f}^{0}_{t}dz\right. \\
			& \left.
			-\tau\int_{\Gamma_{\theta}^{\tau}}e^{zt_{n}}(\mu(e^{-z\tau})\psi_{\alpha}(e^{-z\tau})+\mathbf{A})^{-1}\eta_{2}(e^{-z\tau})\sum_{j=1}^{\infty}\mathbf{f}^{0}_{t}t_{j}e^{-zt_{j}}dz\right \|_{l^{2}}\\
			& +C\left \|\frac{1}{2\pi\mathbf{i}}\int_{\Gamma_{\theta}}e^{zt_{n}}(z^{\alpha}+\mathbf{A})^{-1}z^{-2}\tilde{\mathbf{f}}_{tt}dz\right. \\ 
			& \left. -\frac{\tau}{2\pi\mathbf{i}}\int_{\Gamma_{\theta}^{\tau}}e^{zt_{n}}(\mu(e^{-z\tau})\psi_{\alpha}(e^{-z\tau})+\mathbf{A})^{-1}\eta_{2}(e^{-z\tau})\sum_{j=1}^{\infty}R(t_{j})e^{-zt_{j}}dz\right \|_{l^{2}}\\
			\leq&\uppercase\expandafter{\romannumeral1}+\uppercase\expandafter{\romannumeral2}+\uppercase\expandafter{\romannumeral3}.
		\end{aligned}
	\end{equation*}
	For the first term $\uppercase\expandafter{\romannumeral1}$, similar to the argument in Theorem \ref{thmfullerrorl1f0}, using Lemma \ref{lemueta}, one has
	\begin{equation*}
		\begin{aligned}
			\uppercase\expandafter{\romannumeral1}\leq&C\Bigg \|\int_{\Gamma_{\theta}}e^{zt_{n}}(z^{\alpha}+\mathbf{A})^{-1}z^{-1}\mathbf{f}^{0}dz\\
			&-\tau\int_{\Gamma_{\theta}^{\tau}}e^{zt_{n}}(\mu(e^{-z\tau})\psi_{\alpha}(e^{-z\tau})+\mathbf{A})^{-1}\eta_{2}(e^{-z\tau})\left(\frac{e^{-z\tau}}{1-e^{-z\tau}}+\frac{e^{-z\tau}}{1+e^{-z\tau}}\right)\mathbf{f}^{0}dz\Bigg \|_{l^{2}}\\
			\leq& C\tau^{2}t_{n}^{\alpha-2}\|\mathbf{f}^{0}\|_{l^{2}}.
		\end{aligned}
	\end{equation*}
	As for $\uppercase\expandafter{\romannumeral2}$, using the fact $\sum_{j=1}^{\infty}t_{j}\zeta^{j}=\tau\zeta(\frac{\partial}{\partial\zeta}\frac{1}{1-\zeta})=\frac{\tau\zeta}{(1-\zeta)^{2}}$ \cite{Jin.2017CohBcqffee}, we can obtain
	\begin{equation*}
		\begin{aligned}
			\uppercase\expandafter{\romannumeral2}\leq&C\left \|\int_{\Gamma_{\theta}}e^{zt_{n}}(z^{\alpha}+\mathbf{A})^{-1}z^{-2}\mathbf{f}_{t}^{0}dz \right. \\
			&
			\left. -\int_{\Gamma_{\theta}^{\tau}}e^{zt_{n}}(\mu(e^{-z\tau})\psi_{\alpha}(e^{-z\tau})+\mathbf{A})^{-1}\eta_{2}(e^{-z\tau})\left(\frac{\tau^{2}e^{-z\tau}}{1-e^{-z\tau}}\right)\mathbf{f}^{0}_{t}dz\right \|_{l^{2}}.\\
		\end{aligned}
	\end{equation*}
	Similar to the proof of Theorem \ref{thmfullerrorl1f0},	using Lemma \ref{lemueta} yields 
	\begin{equation*}
		\uppercase\expandafter{\romannumeral2}\leq C\tau^{2}t_{n}^{\alpha-1}\|\mathbf{f}^{0}_{t}\|_{l^{2}}.
	\end{equation*}
	As for the third term $	\uppercase\expandafter{\romannumeral3}$, introduce $\mathcal{E}_{\tau}=\sum_{k=0}^{\infty}\mathcal{E}^{k}_{\tau}\delta_{t_{k}}$ with $\delta_{t_{k}}$ being the Dirac-delta function at $t_{k}$ and
	\begin{equation*}
		\mathcal{E}^{n}_{\tau}=\frac{\tau}{2\pi\mathbf{i}}\int_{\Gamma_{\theta}^{\tau}}e^{zt_{n}}(\mu(e^{-z\tau})\psi_{\alpha}(e^{-z\tau})+\mathbf{A})^{-1}\eta_{2}(e^{-z\tau})dz.
	\end{equation*}
	Thus
	\begin{equation*}
		\begin{aligned}
			& \frac{\tau}{2\pi\mathbf{i}}\int_{\Gamma_{\theta}^{\tau}}e^{zt_{n}}(\mu(e^{-z\tau})\psi_{\alpha}(e^{-z\tau})+\mathbf{A})^{-1}\eta_{2}(e^{-z\tau})\sum_{j=1}^{\infty}R(t_{j})e^{-zt_{j}}dz \\
			& =(\mathcal{E}_{\tau}\ast R)(t_{n})=((\mathcal{E}_{\tau}\ast g)\ast \mathbf{f}_{tt})(t_{n}),
		\end{aligned}
	\end{equation*}
	where $g(t)=t$ and `$\ast$' means convolution. Similarly, introduce
	\begin{equation*}
		\mathcal{E}(t)=\frac{1}{2\pi\mathbf{i}}\int_{\Gamma_{\theta}}e^{zt}(z^{\alpha}+\mathbf{A})^{-1}dz.
	\end{equation*}
	By using convolution property, one  obtains
	\begin{equation*}
		\frac{1}{2\pi\mathbf{i}}\int_{\Gamma_{\theta}}e^{zt_{n}}(z^{\alpha}+\mathbf{A})^{-1}z^{-2}\tilde{\mathbf{f}}_{tt}dz=((\mathcal{E}\ast g)\ast \mathbf{f}_{tt})(t_{n}).
	\end{equation*}
	Thus one can deduce that
	\begin{equation*}
		\uppercase\expandafter{\romannumeral3}\leq C\|(\mathcal{E}\ast g)(t)-(\mathcal{E}_{\tau}\ast g)(t)\|_{l^{2}\rightarrow l^{2}}\ast\|\mathbf{f}_{tt}\|_{l^{2}}.
	\end{equation*}
	Consider $((\mathcal{E}\ast g)(t)-(\mathcal{E}_{\tau}\ast g)(t)$ with $t=t_{n}$ first. Similar to the proof of $\uppercase\expandafter{\romannumeral1}$, one has
	\begin{equation*}
		\begin{aligned}
			&\|(\mathcal{E}\ast g)(t_{n})-(\mathcal{E}_{\tau}\ast g)(t_{n})\|_{l^{2}\rightarrow l^{2}}\\
			\leq&C\left \|\frac{1}{2\pi\mathbf{i}}\int_{\Gamma_{\theta}}e^{zt_{n}}(z^{\alpha}+\mathbf{A})^{-1}z^{-2}dz \right.
			\\
			& \left.   -\int_{\Gamma_{\theta}^{\tau}}e^{zt_{n}}(\mu(e^{-z\tau})\psi_{\alpha}(e^{-z\tau})+\mathbf{A})^{-1}\eta_{2}(e^{-z\tau})\tau\sum_{k=1}^{\infty}t_{k}e^{-zt_{k}}dz\right \|_{l^{2}\rightarrow l^{2}}\\
			\leq&C\tau^{2}t_{n}^{\alpha-1}.
		\end{aligned}
	\end{equation*}
	As for $t\in (t_{n-1},t_{n})$, using Taylor's expansion, we have
	\begin{equation*}
		\begin{aligned}
			&\|(\mathcal{E}\ast g)(t)-(\mathcal{E}_{\tau}\ast g)(t)\|_{l^{2}\rightarrow l^{2}}\\
			\leq&C\|((\mathcal{E}\ast g)(t_{n})-(\mathcal{E}_{\tau}\ast g)(t_{n})\|_{l^{2}\rightarrow l^{2}}\\
			&+C\left \|(t-t_{n})((\mathcal{E}\ast 1)(t_{n})-(\mathcal{E}_{\tau}\ast 1)(t_{n}))\right \|_{l^{2}\rightarrow l^{2}}\\
			&+C\left \|\int_{t_{n}}^{t}(t-s)(\mathcal{E}(s)-\mathcal{E}_{\tau}(s))ds\right \|_{l^{2}\rightarrow l^{2}}\\
			\leq&C\tau^{2}t_{n}^{\alpha-1}+C\tau\left \|(\mathcal{E}\ast 1)(t_{n})-(\mathcal{E}_{\tau}\ast 1)(t_{n})\right \|_{l^{2}\rightarrow l^{2}}\\
			&+C\left \|\int_{t_{n}}^{t}(t-s)\mathcal{E}(s)ds\right \|_{l^{2}\rightarrow l^{2}}+\left\|\int_{t_{n}}^{t}(t-s)\mathcal{E}_{\tau}(s)ds\right \|_{l^{2}\rightarrow l^{2}}.
		\end{aligned}
	\end{equation*}
	Similar to the discussions of $\uppercase\expandafter{\romannumeral1}$, it holds
	\begin{equation*}
		\left \|(\mathcal{E}\ast 1)(t_{n})-(\mathcal{E}_{\tau}\ast 1)(t_{n})\right \|_{l^{2}\rightarrow l^{2}}\leq C\tau t^{\alpha-1}.
	\end{equation*}
	By the definitions of $\mathcal{E}(s)$ and $\mathcal{E}_{\tau}(s)$, one obtains
	\begin{equation*}
		\left \|\int_{t_{n}}^{t}(t-s)\mathcal{E}(s)ds\right \|_{l^{2}\rightarrow l^{2}}\leq C\tau^{2}t_{n}^{\alpha-1},\quad  \left\|\int_{t_{n}}^{t}(t-s)\mathcal{E}_{\tau}(s)ds\right \|_{l^{2}\rightarrow l^{2}}\leq C\tau^{2}t_{n}^{\alpha-1},
	\end{equation*}
	which leads to 
	\begin{equation*}
		\begin{aligned}
			&\|(\mathcal{E}\ast g)(t)-(\mathcal{E}_{\tau}\ast g)(t)\|_{l^{2}\rightarrow l^{2}}
			\leq C\tau^{2}t_{n}^{\alpha-1}
		\end{aligned}
	\end{equation*}
	for $t>0$. 	Thus
	\begin{equation*}
		\uppercase\expandafter{\romannumeral3}\leq C\tau^{2}\int_{0}^{t_{n}}(t_{n}-s)^{\alpha-1}\|\mathbf{f}_{tt}\|_{l^{2}}ds.
	\end{equation*}
	Collecting the above estimates,  the desired result can be reached.
\end{proof}

Similar to the arguments of Theorem \ref{thmfullerrorl1u0},  the following error estimate of the $\overline{SBD}$ scheme for inhomogeneous problem can be deduced.
\begin{theorem}\label{thmfullerrorSBDu0}
	Let $\mathbf{u}_{h}$ and $\mathbf{u}^{n}_{2,h}$ be the solutions of Eqs. \eqref{eqsemischl1} and \eqref{eqfullschsbd}, respectively. Assume  $u_{0}=0$, $\|\mathbf{f}^{0}\|_{l^{2}}<\infty$, $\|\mathbf{f}^{0}_{t}\|_{l^{2}}<\infty$, and $\int_{0}^{t_{n}}(t_{n}-s)^{\alpha-1}\|\mathbf{f}_{tt}(s)\|_{l^{2}}ds<\infty$. Then we have
	\begin{equation*}
		\|\mathbf{u}_{h}(t_{n})-\mathbf{u}^{n}_{2,h}\|_{l^{2}}\leq C\tau^{2}\left(t_{n}^{\alpha-2}\|\mathbf{f}^{0}\|_{l^{2}}+t_{n}^{\alpha-1}\|\mathbf{f}^{0}_{t}\|_{l^{2}}+\int_{0}^{t_{n}}(t_{n}-s)^{\alpha-1}\|\mathbf{f}_{tt}(s)\|_{l^{2}}ds\right).
	\end{equation*}
\end{theorem}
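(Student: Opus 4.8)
The plan is to follow the proof of Theorem~\ref{thmfullerrorl1u0} essentially verbatim, replacing the $\overline{L1}$ symbol $\psi_{\alpha}(e^{-z\tau})$ everywhere by the $\overline{SBD}$ symbol $(\delta_{\tau,2}(e^{-z\tau}))^{\alpha}$ and starting from the solution representation \eqref{equasbdsol} instead of \eqref{equal1sol}. First I would decompose the source via $f(t)=f(0)+tf_{t}(0)+R(t)$ with $R(t)=\int_{0}^{t}(t-s)f_{tt}(s)\,ds$, so that \eqref{equasbdsol} splits into three contour integrals carrying $\mathbf{f}^{0}$, $\mathbf{f}^{0}_{t}$, and $R$. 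The continuous semidiscrete solution $\mathbf{v}_{h}(t)=\mathbf{u}_{h}(t)-\mathbf{u}^{0}$ splits in the same way into resolvent integrals with weights $z^{-1}$, $z^{-2}$, and $z^{-2}\tilde{\mathbf{f}}_{tt}$. Subtracting and applying the triangle inequality yields three error terms $\mathrm{I}+\mathrm{II}+\mathrm{III}$ in exact correspondence with the three appearing in Theorem~\ref{thmfullerrorl1u0}.

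The one genuinely new ingredient is the $\overline{SBD}$ counterpart of Lemma~\ref{lempsi}, namely
\begin{equation*}
	|(\delta_{\tau,2}(e^{-z\tau}))^{\alpha}-z^{\alpha}|\leq C|z|^{\alpha+2}\tau^{2},\qquad C^{-1}|z|^{\alpha}\leq|(\delta_{\tau,2}(e^{-z\tau}))^{\alpha}|\leq C|z|^{\alpha}
\end{equation*}
for $z\in\Gamma_{\theta}^{\tau}$. I would obtain this from an elementary expansion: since $1-e^{-z\tau}=z\tau-(z\tau)^{2}/2+(z\tau)^{3}/6+\mathcal{O}((z\tau)^{4})$, one computes $(1-e^{-z\tau})+(1-e^{-z\tau})^{2}/2=z\tau-(z\tau)^{3}/3+\mathcal{O}((z\tau)^{4})$, so that $\delta_{\tau,2}(e^{-z\tau})=z\big(1-(z\tau)^{2}/3+\mathcal{O}((z\tau)^{3})\big)$ and hence $(\delta_{\tau,2}(e^{-z\tau}))^{\alpha}=z^{\alpha}+z^{\alpha+2}\mathcal{O}(\tau^{2})$. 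This is just the second-order consistency of $\delta_{\tau,2}$ lifted to the fractional power, as in the convolution-quadrature theory of \cite{Lubich.1996Ndeefaoaeewapmt}, and the two-sided bound follows exactly as the limit computations in Lemma~\ref{lempsi}.

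With this in hand, every other estimate is symbol-agnostic. The sector containment $\mu(e^{-z\tau})(\delta_{\tau,2}(e^{-z\tau}))^{\alpha}\in\Sigma_{\theta_{0}}$ is already supplied by Lemma~\ref{lemtheta}, so the resolvent $(\mu(e^{-z\tau})(\delta_{\tau,2}(e^{-z\tau}))^{\alpha}+\mathbf{A})^{-1}$ obeys $\|\,\cdot\,\|_{l^{2}\to l^{2}}\leq C|z|^{-\alpha}$. Combining $|\mu(e^{-z\tau})-1|\leq C|z|^{3}\tau^{3}$ from Lemma~\ref{lemueta} with the consistency bound above gives $|\mu(e^{-z\tau})(\delta_{\tau,2}(e^{-z\tau}))^{\alpha}-z^{\alpha}|\leq C|z|^{\alpha+2}\tau^{2}$ on $\Gamma_{\theta}^{\tau}$ (the cubic remainder $|z|^{\alpha+3}\tau^{3}$ is absorbed since $|z|\tau\leq C$ there). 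Feeding this, together with the $\eta_{1},\eta_{2}$ estimates of Lemma~\ref{lemueta}, into the resolvent-identity and mean-value argument of Theorem~\ref{thmfullerrorl1u0} bounds the first two terms by $C\tau^{2}t_{n}^{\alpha-2}\|\mathbf{f}^{0}\|_{l^{2}}$ and $C\tau^{2}t_{n}^{\alpha-1}\|\mathbf{f}^{0}_{t}\|_{l^{2}}$ respectively. For the third term I would reuse the convolution decomposition $(\mathcal{E}_{\tau}\ast g)\ast\mathbf{f}_{tt}$ versus $(\mathcal{E}\ast g)\ast\mathbf{f}_{tt}$ with $g(t)=t$, where now $\mathcal{E}_{\tau}$ is built from the $\overline{SBD}$ symbol; the kernel estimate $\|(\mathcal{E}\ast g)(t)-(\mathcal{E}_{\tau}\ast g)(t)\|_{l^{2}\to l^{2}}\leq C\tau^{2}t_{n}^{\alpha-1}$, obtained through the same splitting into $t=t_{n}$ and $t\in(t_{n-1},t_{n})$ with Taylor's expansion, carries over unchanged and produces $\mathrm{III}\leq C\tau^{2}\int_{0}^{t_{n}}(t_{n}-s)^{\alpha-1}\|\mathbf{f}_{tt}(s)\|_{l^{2}}\,ds$.

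The main obstacle is therefore purely the verification of the second-order consistency of the $\overline{SBD}$ symbol, i.e.\ establishing the displayed estimate for $(\delta_{\tau,2}(e^{-z\tau}))^{\alpha}$; once that single estimate is in place, the argument is a line-by-line transcription of Theorem~\ref{thmfullerrorl1u0}, since the sector property (Lemma~\ref{lemtheta}) and the auxiliary estimates for $\mu,\eta_{1},\eta_{2}$ (Lemma~\ref{lemueta}) were already stated for the $\overline{SBD}$ case. I expect no difficulty beyond bookkeeping in the remaining terms, as the contour $\Gamma_{\theta}^{\tau}$, the resolvent bound, and the convolution kernel argument are identical to the $\overline{L1}$ analysis.
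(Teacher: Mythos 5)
Your proposal is correct and takes essentially the same route the paper intends: the paper proves this theorem only by the remark that it follows ``similar to the arguments of Theorem~\ref{thmfullerrorl1u0}'', which is precisely your line-by-line transcription starting from \eqref{equasbdsol}. The one ingredient you make explicit --- the $\overline{SBD}$ analogue of Lemma~\ref{lempsi}, i.e.\ the expansion $(\delta_{\tau,2}(e^{-z\tau}))^{\alpha}=z^{\alpha}+z^{\alpha+2}\mathcal{O}(\tau^{2})$ with the two-sided bound $C^{-1}|z|^{\alpha}\leq|(\delta_{\tau,2}(e^{-z\tau}))^{\alpha}|\leq C|z|^{\alpha}$ on $\Gamma_{\theta}^{\tau}$ --- is exactly what the paper leaves implicit (it is standard second-order convolution-quadrature consistency as in \cite{Lubich.1996Ndeefaoaeewapmt,Jin.2017CohBcqffee}), and your derivation of it is sound.
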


\begin{remark}
	In this section, we mainly build the time discretization for central finite difference scheme \eqref{eqsemischl1} and provide relative error estimates. As for the modified central difference scheme \eqref{eqsemischmod}, the similar results can be got easily.
\end{remark}

\begin{remark}
	In this section, we only measure the temporal error in $l^{2}$-norm. As for $l^{\infty}$-norm, the corresponding error estimates can be obtained similarly.
\end{remark}

\section{Numerical Experiments}\label{sec4}
In this section, some numerical examples are provided to verify the temporal and spatial convergence rates. In all numerical experiments, we take $T=1$. Caused by the unknown solution of Eq. \eqref{eqretosol}, we can measure the spatial and temporal convergence rates by
\begin{equation*}
	Rate=\frac{\ln(e_{h}/e_{h/2})}{\ln(2)},\quad Rate=\frac{\ln(e_{\tau}/e_{\tau/2})}{\ln(2)}.
\end{equation*}
Here
\begin{equation*}
	e_{h}=\|\mathbf{u}^{n}_{h}-\mathbf{u}^{n}_{h/2}\|_{\mathbb{V}},\quad e_{\tau}=\|\mathbf{u}_{\tau}-\mathbf{u}_{\tau/2}\|_{\mathbb{V}},
\end{equation*}
where $\mathbb{V}$ is $l^{2}$ or $l^{\infty}$, $\mathbf{u}^{n}_{h}$ is the solution at time $T$ with mesh size $h$, and $\mathbf{u}_{\tau}$ is the solution at time $T$ with time step size $\tau$.
\begin{example}
	In this example, we show the temporal convergence for numerically solving the homogeneous problem \eqref{eqretosol}. Here, we take
	\begin{equation*}
		u_{0}=\chi_{x>0.5}(x)
	\end{equation*}
	with $\chi_{x>0.5}$ being the characteristic function on $x>0.5$.
	To reduce the influence of spatial discretization on convergence rates, we take $h=\frac{1}{512}$. Tables \ref{tab:timeL1f0} and \ref{tab:timeSBDf0} show the errors and convergence rates of $\overline{L1}$ and $\overline{SBD}$ schemes, respectively, which agree well with the predicted results in Theorems \ref{thmfullerrorl1f0} and \ref{thmfullerrorSBDf0}.
	\begin{table}[htbp]
		\caption{The temporal errors and convergence rates of $\overline{L1}$.}
		\begin{tabular}{c|c|ccccc}
			\hline
			$\alpha$& $L$ & 32 & 64 & 128 & 256 & 512\\
			\hline
			& $l^2$ & 3.834E-06 & 9.473E-07 & 2.353E-07 & 5.862E-08 & 1.462E-08 \\
			0.2 &  & Rate & 2.0169  & 2.0095  & 2.0050  & 2.0033  \\
			& $l^{\infty}$ & 5.833E-06 & 1.441E-06 & 3.579E-07 & 8.917E-08 & 2.224E-08 \\
			&  & Rate & 2.0169  & 2.0095  & 2.0050  & 2.0034  \\
			& $l^2$ & 1.275E-05 & 3.131E-06 & 7.732E-07 & 1.918E-07 & 4.772E-08 \\
			0.8 &  & Rate & 2.0258  & 2.0176  & 2.0110  & 2.0070  \\
			& $l^{\infty}$ & 1.847E-05 & 4.537E-06 & 1.121E-06 & 2.781E-07 & 6.919E-08 \\
			&  & Rate & 2.0255  & 2.0174  & 2.0108  & 2.0069  \\
			\hline
		\end{tabular}
		\label{tab:timeL1f0}
	\end{table}
	
	\begin{table}[htbp]
		\caption{The temporal errors and convergence rates of $\overline{SBD}$.}
		\begin{tabular}{c|c|cccccc}
			\hline
			$\alpha$& $L$ & 32 & 64 & 128&256 & 512\\
			\hline
			& $l^2$ & 6.386E-06 & 1.590E-06 & 3.961E-07 & 9.885E-08 & 2.469E-08 \\
			0.4 &  & Rate & 2.0062  & 2.0046  & 2.0027  & 2.0014  \\
			& $l^{\infty}$ & 9.626E-06 & 2.396E-06 & 5.972E-07 & 1.490E-07 & 3.722E-08 \\
			&  & Rate & 2.0061  & 2.0046  & 2.0026  & 2.0014  \\
			& $l^2$ & 1.017E-05 & 2.525E-06 & 6.284E-07 & 1.567E-07 & 3.912E-08 \\
			0.6 &  & Rate & 2.0100  & 2.0067  & 2.0038  & 2.0020  \\
			& $l^{\infty}$ & 1.512E-05 & 3.753E-06 & 9.339E-07 & 2.329E-07 & 5.813E-08 \\
			&  & Rate & 2.0099  & 2.0067  & 2.0038  & 2.0020 \\
			\hline
		\end{tabular}
		\label{tab:timeSBDf0}
	\end{table}

\end{example}

\begin{example}
	In this example, we  consider temporal convergence	rates for inhomogeneous
	problem \eqref{eqretosol}. Here, the source term $f(x,t)$ is chosen as
	\begin{equation*}
		f(x,t)=(t+1)^{1.5}x.
	\end{equation*}
	We choose $h=\frac{1}{512}$ to eliminate the influence of spatial discretization on convergence rates. The errors and convergence rates of $\overline{L1}$ and $\overline{SBD}$ schemes are shown in Tables \ref{tab:timeL1u0} and \ref{tab:timeSBDu0}, respectively. It can be noted all the convergence rates are the same with the predicted ones in Theorems \ref{thmfullerrorl1u0} and \ref{thmfullerrorSBDu0}.
	\begin{table}[htbp]
		\caption{The temporal errors and convergence rates of $\overline{L1}$.}
		\begin{tabular}{c|c|ccccc}
			\hline
			$\alpha$& $L$ & 32 & 64 & 128 & 256 & 512\\
			\hline
			& $l^2$ & 3.978E-06 & 9.934E-07 & 2.482E-07 & 6.205E-08 & 1.551E-08 \\
			0.4 &  & Rate & 2.0015  & 2.0006  & 2.0003  & 2.0001  \\
			& $l^{\infty}$ & 5.542E-06 & 1.384E-06 & 3.459E-07 & 8.645E-08 & 2.161E-08 \\
			&  & Rate & 2.0015  & 2.0006  & 2.0003  & 2.0001  \\
			& $l^2$ & 3.990E-06 & 9.970E-07 & 2.492E-07 & 6.231E-08 & 1.558E-08 \\
			0.6 &  & Rate & 2.0006  & 2.0000  & 1.9999  & 2.0000  \\
			& $l^{\infty}$ & 5.560E-06 & 1.389E-06 & 3.473E-07 & 8.683E-08 & 2.171E-08 \\
			&  & Rate & 2.0006  & 2.0000  & 1.9999  & 2.0000\\
			\hline
		\end{tabular}
		\label{tab:timeL1u0}
	\end{table}
	
	\begin{table}[htbp]
		\caption{The temporal errors and convergence rates of $\overline{SBD}$.}
		\begin{tabular}{c|c|ccccc}
			\hline
			$\alpha$& $L$& 32 & 64 & 128 & 256 & 512\\
			\hline
			& $l^2$ & 4.093E-06 & 1.021E-06 & 2.549E-07 & 6.370E-08 & 1.592E-08 \\
			0.2 &  & Rate & 2.0034  & 2.0016  & 2.0008  & 2.0004  \\
			& $l^{\infty}$ & 5.703E-06 & 1.422E-06 & 3.552E-07 & 8.874E-08 & 2.218E-08 \\
			&  & Rate & 2.0034  & 2.0016  & 2.0008  & 2.0004  \\
			& $l^2$ & 3.979E-06 & 9.937E-07 & 2.484E-07 & 6.211E-08 & 1.553E-08 \\
			0.8 &  & Rate & 2.0016  & 2.0001  & 1.9999  & 2.0000  \\
			& $l^{\infty}$ & 5.548E-06 & 1.385E-06 & 3.463E-07 & 8.658E-08 & 2.165E-08 \\
			&  & Rate & 2.0016  & 2.0001  & 1.9999  & 2.0000 \\
			\hline
		\end{tabular}
		\label{tab:timeSBDu0}
	\end{table}

\end{example}

\begin{example}
	Here we take $\tau=\frac{1}{512}$. The initial value and source term are, respectively, chosen as
	\begin{equation}\label{eqexf01}
		u_{0}=(0.25^{2}-(x-0.5)^{2})^{\sigma},\quad f(x,t)=0
	\end{equation}
	and
	\begin{equation}\label{eqexf02}
		u_{0}=(x-0.5)^{-\gamma}\chi_{x>0.5},\quad f(x,t)=0
	\end{equation}
	to verify the spatial convergence rates for solving homogeneous problem \eqref{eqretosol}.
	
	Note that for the initial value \eqref{eqexf01}, 
	\begin{equation*}
		u_{0}\in\left\{
		\begin{array}{lll}
			\hat{H}^{\sigma+\frac{1}{2}-\epsilon}(\Omega), & \sigma\in (0,1);\\
			\hat{H}^{\frac{3}{2}}(\Omega), & \sigma>1,
		\end{array}\right.
	\end{equation*}
	and for the initial data \eqref{eqexf02}, $u_{0}$ belongs to $\hat{H}^{\gamma-0.5-\epsilon}(\Omega)$ with $\gamma\in(0,0.5)$.
	
	Here we first use $\overline{L1}$ method to discretize the Riemann--Liouville fractional derivative, and apply the central finite difference  and its modified scheme in spatial direction to solve the homogeneous problem \eqref{eqretosol} with data \eqref{eqexf01}. These results are presented in Tables \ref{tab:spaceL1f0} and \ref{tab:spaceL1f0mod}, which are consistent with our predicted results. Then we use the $\overline{SBD}$ method to discretize the Riemann--Liouville fractional derivative and apply the central finite difference method and its modified scheme in spatial direction to solve Eq. \eqref{eqretosol} with data \eqref{eqexf02}. All the convergence rates presented in Tables \ref{tab:spaceSBDf0} and \ref{tab:spaceSBDf0mod} are consistent with
	the theoretical predictions.
	\begin{table}[htbp]
		\caption{The spatial errors and convergence rates of $\overline{L1}$ with data \eqref{eqexf01}.}
		\begin{tabular}{c|c|ccccc}
			\hline
			($\alpha$,$\sigma$)& N & 32 & 64 & 128 & 256 & 512\\
			\hline
			& $l^2$ & 2.616E-04 & 1.048E-04 & 4.227E-05 & 1.711E-05 & 6.936E-06 \\
			(0.2,0.3) &  & Rate & 1.3197  & 1.3098  & 1.3050  & 1.3025  \\
			& $l^{\infty}$ & 3.376E-04 & 1.327E-04 	& 5.298E-05 & 2.133E-05 & 8.627E-06 \\
			&  & Rate & 1.3473  & 1.3243  & 1.3124  & 1.3062  \\
			& $l^2$ & 4.092E-05 & 1.334E-05 & 4.376E-06 & 1.439E-06 & 4.738E-07 \\
			(0.4,0.6) &  & Rate & 1.6165  & 1.6086  & 1.6046  & 1.6026  \\
			& $l^{\infty}$ & 5.696E-05 & 1.807E-05 	& 5.791E-06 & 1.869E-06 & 6.064E-07 \\
			&  & Rate & 1.6561  & 1.6420  & 1.6316  & 1.6239  \\
			& $l^2$ & 5.386E-06 & 1.430E-06 & 3.812E-07 & 1.018E-07 & 2.720E-08 \\
			(0.6,0.9) &  & Rate & 1.9129  & 1.9076  & 1.9051  & 1.9038  \\
			& $l^{\infty}$ & 8.711E-06 & 2.291E-06 & 6.032E-07 & 1.590E-07 & 4.197E-08 \\
			&  & Rate & 1.9271  & 1.9250  & 1.9234  & 1.9220  \\
			& $l^2$ & 5.09E-07 & 1.14E-07 & 2.59E-08 & 5.94E-09 & 1.38E-09 \\
			(0.8,1.2) &  & Rate & 2.1575  & 2.1405  & 2.1245  & 2.1084  \\
			& $l^{\infty}$ & 1.05E-06 & 2.43E-07 & 5.68E-08 & 1.33E-08 & 3.14E-09 \\
			&  & Rate & 2.1044  & 2.0987  & 2.0924  & 2.0858  \\
			\hline
		\end{tabular}
		\label{tab:spaceL1f0}
	\end{table}

	\begin{table}[htbp]
		\caption{The spatial errors and convergence rates of $\overline{SBD}$ with data \eqref{eqexf02}.}
		\begin{tabular}{c|c|ccccc}
			\hline
			($\alpha$,$\gamma$)& N & 32 & 64 & 128 & 256 & 512\\
			\hline
			& $l^2$ & 8.758E-03 & 6.148E-03 & 4.332E-03 & 3.057E-03 & 2.160E-03 \\
			(0.2,0.499) &  & Rate & 0.5105  & 0.5051  & 0.5027  & 0.5014  \\
			& $l^{\infty}$ & 1.540E-02 & 1.080E-02 & 7.608E-03 & 5.367E-03 & 3.791E-03 \\
			&  & Rate & 0.5112  & 0.5059  & 0.5032  & 0.5017  \\
			& $l^2$ & 5.730E-03 & 3.887E-03 & 2.647E-03 & 1.806E-03 & 1.233E-03 \\
			(0.4,0.45) &  & Rate & 0.5600  & 0.5543  & 0.5518  & 0.5505  \\
			& $l^{\infty}$ & 1.000E-02 & 6.782E-03 & 4.616E-03 & 3.148E-03 & 2.149E-03 \\
			&  & Rate & 0.5607  & 0.5551  & 0.5523  & 0.5508  \\
			& $l^2$ & 3.292E-03 & 2.156E-03 & 1.418E-03 & 9.345E-04 & 6.163E-04 \\
			(0.6,0.4) &  & Rate & 0.6103  & 0.6044  & 0.6019  & 0.6006  \\
			& $l^{\infty}$ & 5.682E-03 & 3.720E-03 & 2.445E-03 & 1.611E-03 & 1.062E-03 \\
			&  & Rate & 0.6110  & 0.6052  & 0.6024  & 0.6009  \\
			& $l^2$ & 1.12E-03 & 6.86E-04 & 4.21E-04 & 2.59E-04 & 1.59E-04 \\
			(0.8,0.3) &  & Rate & 0.7112  & 0.7049  & 0.7021  & 0.7007  \\
			& $l^{\infty}$ & 1.91E-03 & 1.16E-03 & 7.14E-04 & 4.39E-04 & 2.70E-04 \\
			&  & Rate & 0.7121  & 0.7057  & 0.7026  & 0.7010 \\
			\hline
		\end{tabular}
		\label{tab:spaceSBDf0}
	\end{table}

	\begin{table}[htbp]
		\caption{The spatial errors and convergence rates of $\overline{L1}$-modified central finite difference scheme with data \eqref{eqexf01}.}
		\begin{tabular}{c|c|ccccc}
			\hline
			($\alpha$,$\sigma$)& N & 32 & 64 & 128 & 256 & 512\\
			\hline
			& $l^2$ & 1.356E-05 & 3.340E-06 & 8.298E-07 & 2.069E-07 & 5.166E-08 \\
			(0.2,0.3) &  & Rate & 2.0217  & 2.0091  & 2.0039  & 2.0017  \\
			& $l^{\infty}$ & 2.282E-05 & 5.694E-06 & 1.422E-06 & 3.554E-07 & 8.884E-08 \\
			&  & Rate & 2.0026  & 2.0013  & 2.0006  & 2.0002  \\
			& $l^2$ & 4.305E-06 & 1.068E-06 & 2.662E-07 & 6.646E-08 & 1.661E-08 \\
			(0.4,0.6) &  & Rate & 2.0111  & 2.0044  & 2.0019  & 2.0008  \\
			& $l^{\infty}$ & 7.780E-06 & 1.945E-06 & 4.864E-07 & 1.216E-07 & 3.040E-08 \\
			&  & Rate & 1.9996  & 2.0000  & 2.0000  & 2.0000  \\
			& $l^2$ & 1.240E-06 & 3.082E-07 & 7.687E-08 & 1.920E-08 & 4.796E-09 \\
			(0.6,0.9) &  & Rate & 2.0083  & 2.0035  & 2.0016  & 2.0008  \\
			& $l^{\infty}$ & 2.371E-06 & 5.925E-07 & 1.481E-07 & 3.703E-08 & 9.257E-09 \\
			&  & Rate & 2.0006  & 2.0001  & 2.0000  & 2.0000  \\
			& $l^2$ & 2.70E-07 & 6.70E-08 & 1.67E-08 & 4.18E-09 & 1.04E-09 \\
			(0.8,1.2) &  & Rate & 2.0079  & 2.0034  & 2.0016  & 2.0007  \\
			& $l^{\infty}$ & 5.75E-07 & 1.43E-07 & 3.58E-08 & 8.95E-09 & 2.24E-09 \\
			&  & Rate & 2.0041  & 2.0010  & 2.0002  & 2.0001  \\
			\hline
		\end{tabular}
		\label{tab:spaceL1f0mod}
	\end{table}

	\begin{table}[htbp]
		\caption{The spatial errors and convergence rates of $\overline{SBD}$-modified central finite difference scheme with data \eqref{eqexf02}.}
		\begin{tabular}{c|c|ccccc}
			\hline
			($\alpha$,$\gamma$)& N & 32 & 64 & 128 & 256 & 512\\
			\hline
			& $l^2$ & 1.450E-04 & 3.781E-05 & 9.833E-06 & 2.551E-06 & 6.601E-07 \\
			(0.2,0.499) &  & Rate & 1.9390  & 1.9428  & 1.9467  & 1.9503  \\
			& $l^{\infty}$ & 6.335E-04 & 2.238E-04 & 7.907E-05 & 2.795E-05 & 9.878E-06 \\
			&  & Rate & 1.5013  & 1.5008  & 1.5006  & 1.5003  \\
			& $l^2$ & 9.628E-05 & 2.457E-05 & 6.265E-06 & 1.595E-06 & 4.053E-07 \\
			(0.4,0.45) &  & Rate & 1.9701  & 1.9717  & 1.9740  & 1.9763  \\
			& $l^{\infty}$ & 3.962E-04 & 1.352E-04 & 4.617E-05 & 1.577E-05 & 5.387E-06 \\
			&  & Rate & 1.5510  & 1.5503  & 1.5499  & 1.5495  \\
			& $l^2$ & 5.608E-05 & 1.406E-05 & 3.529E-06 & 8.858E-07 & 2.223E-07 \\
			(0.6,0.4) &  & Rate & 1.9958  & 1.9944  & 1.9943  & 1.9947  \\
			& $l^{\infty}$ & 2.119E-04 & 6.985E-05 & 2.303E-05 & 7.597E-06 & 2.507E-06 \\
			&  & Rate & 1.6014  & 1.6006  & 1.6001  & 1.5998  \\
			& $l^2$ & 2.11E-05 & 5.19E-06 & 1.28E-06 & 3.17E-07 & 7.88E-08 \\
			(0.8,0.3) &  & Rate & 2.0266  & 2.0183  & 2.0130  & 2.0095  \\
			& $l^{\infty}$ & 6.65E-05 & 2.00E-05 & 6.12E-06 & 1.89E-06 & 5.81E-07 \\
			&  & Rate & 1.7336  & 1.7073  & 1.6990  & 1.6989  \\
			\hline
		\end{tabular}
		\label{tab:spaceSBDf0mod}
	\end{table}
	
\end{example}

\begin{example}
	To  verify the spatial convergence rates for solving inhomogeneous problem \eqref{eqretosol}, we take $\tau=\frac{1}{512}$ and choose the initial value and source term as
	\begin{equation}\label{eqexu01}
		u_{0}=0,\quad f(x,t)=(t+1)^{0.5}(0.25^{2}-(x-0.5)^{2})^{\sigma}
	\end{equation}
	and
	\begin{equation}\label{eqexu02}
		u_{0}=0,\quad f(x,t)=(t+1)^{0.5}(x-0.5)^{-\gamma}\chi_{x>0.5}.
	\end{equation}
	
	Here we use $\overline{L1}$ method to discretize the Riemann--Liouville fractional derivative, and  the central finite difference  and modified central  finite difference schemes are respectively used to approximate  Laplace operator to solve the inhomogeneous problem \eqref{eqretosol} with data \eqref{eqexu02}. These results are presented in Tables \ref{tab:spaceL1u0} and \ref{tab:spaceL1u0mod}, which are consistent with our predicted results. Then we use $\overline{SBD}$ method to discretize the Riemann--Liouville fractional derivative and apply the same discretizations as before in spatial direction  to solve  \eqref{eqretosol} with data \eqref{eqexu01}. The resulting convergence rates presented in Tables \ref{tab:spaceSBDu0} and \ref{tab:spaceSBDu0mod} agree with
	the theoretical results.
	\begin{table}[htbp]
		\caption{The spatial errors and convergence rates of $\overline{L1}$ with data \eqref{eqexu02}.}
		\begin{tabular}{c|c|ccccc}
			\hline
			($\alpha$,$\gamma$)& N & 32 & 64 & 128 & 256 & 512\\
			\hline
			& $l^2$ & 1.484E-02 & 1.042E-02 & 7.340E-03 & 5.180E-03 & 3.660E-03 \\
			(0.8,0.499) &  & Rate & 0.5104  & 0.5050  & 0.5026  & 0.5014  \\
			& $l^{\infty}$ & 2.595E-02 & 1.821E-02 & 1.282E-02 & 9.048E-03 & 6.390E-03 \\
			&  & Rate & 0.5111  & 0.5058  & 0.5032  & 0.5017  \\
			& $l^2$ & 1.172E-02 & 7.948E-03 & 5.412E-03 & 3.692E-03 & 2.521E-03 \\
			(0.6,0.45) &  & Rate & 0.5602  & 0.5543  & 0.5518  & 0.5505  \\
			& $l^{\infty}$ & 2.056E-02 & 1.394E-02 & 9.485E-03 & 6.468E-03 & 4.415E-03 \\
			&  & Rate & 0.5608  & 0.5552  & 0.5523  & 0.5508  \\
			& $l^2$ & 9.242E-03 & 6.051E-03 & 3.979E-03 & 2.622E-03 & 1.729E-03 \\
			(0.4,0.4) &  & Rate & 0.6110  & 0.6047  & 0.6020  & 0.6006  \\
			& $l^{\infty}$ & 1.625E-02 & 1.064E-02 & 6.990E-03 & 4.604E-03 & 3.035E-03 \\
			&  & Rate & 0.6116  & 0.6056  & 0.6025  & 0.6009  \\
			& $l^2$ & 5.84E-03 & 3.56E-03 & 2.18E-03 & 1.34E-03 & 8.26E-04 \\
			(0.2,0.3) &  & Rate & 0.7131  & 0.7057  & 0.7025  & 0.7009  \\
			& $l^{\infty}$ & 1.03E-02 & 6.27E-03 & 3.84E-03 & 2.36E-03 & 1.45E-03 \\
			&  & Rate & 0.7135  & 0.7064  & 0.7030  & 0.7012 \\
			\hline
		\end{tabular}
		\label{tab:spaceL1u0}
	\end{table}

	\begin{table}[htbp]
		\caption{The spatial errors and convergence rates of $\overline{SBD}$ with data \eqref{eqexu01}.}
		\begin{tabular}{c|c|ccccc}
			\hline
			($\alpha$,$\sigma$)& N & 32 & 64 & 128 & 256 & 512\\
			\hline
			& $l^2$ & 4.431E-04 & 1.776E-04 & 7.164E-05 & 2.900E-05 & 1.176E-05 \\
			(0.8,0.3) &  & Rate & 1.3192  & 1.3095  & 1.3048  & 1.3024  \\
			& $l^{\infty}$ & 5.736E-04 & 2.248E-04 & 8.958E-05 & 3.603E-05 & 1.456E-05 \\
			&  & Rate & 1.3512  & 1.3275  & 1.3140  & 1.3071  \\
			& $l^2$ & 8.395E-05 & 2.736E-05 & 8.969E-06 & 2.948E-06 & 9.705E-07 \\
			(0.6,0.6) &  & Rate & 1.6174  & 1.6093  & 1.6051  & 1.6030  \\
			& $l^{\infty}$ & 1.166E-04 & 3.696E-05 & 1.183E-05 & 3.817E-06 & 1.238E-06 \\
			&  & Rate & 1.6575  & 1.6431  & 1.6325  & 1.6246  \\
			& $l^2$ & 1.545E-05 & 4.098E-06 & 1.091E-06 & 2.909E-07 & 7.766E-08 \\
			(0.4,0.9) &  & Rate & 1.9147  & 1.9094  & 1.9068  & 1.9054  \\
			& $l^{\infty}$ & 2.488E-05 & 6.532E-06 & 1.717E-06 & 4.521E-07 & 1.191E-07 \\
			&  & Rate & 1.9294  & 1.9273  & 1.9256  & 1.9241  \\
			& $l^2$ & 2.92E-06 & 6.61E-07 & 1.51E-07 & 3.48E-08 & 8.11E-09 \\
			(0.2,1.2) &  & Rate & 2.1456  & 2.1300  & 2.1161  & 2.1025  \\
			& $l^{\infty}$ & 5.98E-06 & 1.40E-06 & 3.31E-07 & 7.82E-08 & 1.86E-08 \\
			&  & Rate & 2.0911  & 2.0857  & 2.0795  & 2.0733 \\
			\hline
		\end{tabular}
		\label{tab:spaceSBDu0}
	\end{table}

	\begin{table}[htbp]
		\caption{The spatial errors and convergence rates of $\overline{L1}$-modified central finite difference scheme with data \eqref{eqexu02}.}
		\begin{tabular}{c|c|ccccc}
			\hline
			($\alpha$,$\gamma$)& N & 32 & 64 & 128 & 256 & 512\\
			\hline
			& $l^2$ & 2.406E-04 & 6.271E-05 & 1.630E-05 & 4.227E-06 & 1.093E-06 \\
			(0.8,0.499) &  & Rate & 1.9400  & 1.9436  & 1.9474  & 1.9509  \\
			& $l^{\infty}$ & 1.044E-03 & 3.685E-04 & 1.302E-04 & 4.600E-05 & 1.626E-05 \\
			&  & Rate & 1.5019  & 1.5012  & 1.5008  & 1.5005  \\
			& $l^2$ & 2.007E-04 & 5.124E-05 & 1.307E-05 & 3.327E-06 & 8.458E-07 \\
			(0.6,0.45) &  & Rate & 1.9695  & 1.9712  & 1.9736  & 1.9759  \\
			& $l^{\infty}$ & 8.331E-04 & 2.844E-04 & 9.711E-05 & 3.317E-05 & 1.133E-05 \\
			&  & Rate & 1.5507  & 1.5501  & 1.5498  & 1.5495  \\
			& $l^2$ & 1.686E-04 & 4.228E-05 & 1.062E-05 & 2.665E-06 & 6.690E-07 \\
			(0.4,0.4) &  & Rate & 1.9950  & 1.9939  & 1.9938  & 1.9943  \\
			& $l^{\infty}$ & 6.624E-04 & 2.184E-04 & 7.206E-05 & 2.378E-05 & 7.846E-06 \\
			&  & Rate & 1.6005  & 1.6000  & 1.5997  & 1.5994  \\
			& $l^2$ & 1.24E-04 & 3.03E-05 & 7.45E-06 & 1.84E-06 & 4.57E-07 \\
			(0.2,0.3) &  & Rate & 2.0307  & 2.0219  & 2.0158  & 2.0117  \\
			& $l^{\infty}$ & 4.20E-04 & 1.29E-04 & 3.98E-05 & 1.22E-05 & 3.77E-06 \\
			&  & Rate & 1.7005  & 1.7000  & 1.6997  & 1.6995  \\
			\hline
		\end{tabular}
		\label{tab:spaceL1u0mod}
	\end{table}

	\begin{table}[htbp]
		\caption{The spatial errors and convergence rates of $\overline{SBD}$-modified central finite difference scheme with data \eqref{eqexu01}.}
		\begin{tabular}{c|c|ccccc}
			\hline
			($\alpha$,$\sigma$)& N & 32 & 64 & 128 & 256 & 512\\
			\hline
			& $l^2$ & 2.282E-05 & 5.622E-06 & 1.397E-06 & 3.483E-07 & 8.698E-08 \\
			(0.8,0.3) &  & Rate & 2.0212  & 2.0089  & 2.0038  & 2.0016  \\
			& $l^{\infty}$ & 3.774E-05 & 9.418E-06 & 2.352E-06 & 5.878E-07 & 1.469E-07 \\
			&  & Rate & 2.0026  & 2.0013  & 2.0006  & 2.0002  \\
			& $l^2$ & 8.863E-06 & 2.199E-06 & 5.480E-07 & 1.368E-07 & 3.418E-08 \\
			(0.6,0.6) &  & Rate & 2.0112  & 2.0044  & 2.0019  & 2.0008  \\
			& $l^{\infty}$ & 1.633E-05 & 4.084E-06 & 1.021E-06 & 2.553E-07 & 6.382E-08 \\
			&  & Rate & 1.9996  & 2.0000  & 2.0000  & 2.0000  \\
			& $l^2$ & 3.571E-06 & 8.876E-07 & 2.214E-07 & 5.528E-08 & 1.381E-08 \\
			(0.4,0.9) &  & Rate & 2.0084  & 2.0035  & 2.0016  & 2.0008  \\
			& $l^{\infty}$ & 7.085E-06 & 1.773E-06 & 4.435E-07 & 1.109E-07 & 2.772E-08 \\
			&  & Rate & 1.9982  & 1.9995  & 1.9999  & 2.0000  \\
			& $l^2$ & 1.47E-06 & 3.65E-07 & 9.10E-08 & 2.27E-08 & 5.68E-09 \\
			(0.2,1.2) &  & Rate & 2.0081  & 2.0035  & 2.0016  & 2.0007  \\
			& $l^{\infty}$ & 3.08E-06 & 7.71E-07 & 1.93E-07 & 4.82E-08 & 1.20E-08 \\
			&  & Rate & 1.9973  & 1.9993  & 1.9998  & 2.0000 \\
			\hline
		\end{tabular}
		\label{tab:spaceSBDu0mod}
	\end{table}
\end{example}
\section{Conclusions}
This paper is mainly to develop sharp error estimates of spatial-temporal finite difference scheme for fractional sub-diffusion equation without any regularity assumption on the exact solution. We first use central finite difference to approximate the Laplace operator,  and then by modifying the initial value and source term, the spatial convergence rates are improved.  Next, we apply $\overline{L1}$ and  $\overline{SBD}$ schemes to discretize the Riemann--Liouville fractional derivative. It is worth mentioning that the spatial error analyses are independent of the regularity of the exact solution and the temporal error estimates hold for all $\alpha\in(0,1)$. Finally, the numerical experiments validate the correctness of the theoretical analyses.

%
% For tables use
%\begin{table}
%% table caption is above the table
%\caption{Please write your table caption here}
%\label{tab:1}       % Give a unique label
%% For LaTeX tables use
%\begin{tabular}{lll}
%\hline\noalign{\smallskip}
%first & second & third  \\
%\noalign{\smallskip}\hline\noalign{\smallskip}
%number & number & number \\
%number & number & number \\
%\noalign{\smallskip}\hline
%\end{tabular}
%\end{table}

%\begin{acknowledgements}
%If you'd like to thank anyone, place your comments here
%and remove the percent signs.
%\end{acknowledgements}

% Authors must disclose all relationships or interests that
% could have direct or potential influence or impart bias on
% the work:
%
% \section*{Conflict of interest}
%
% The authors declare that they have no conflict of interest.

% BibTeX users please use one of
%\bibliographystyle{spbasic}      % basic style, author-year citations
%\bibliographystyle{spmpsci}      % mathematics and physical sciences
%\bibliographystyle{spphys}       % APS-like style for physics
%\bibliography{}   % name your BibTeX data base
\bibliographystyle{spmpsci}
\bibliography{fdm}
% Non-BibTeX users please use
%\begin{thebibliography}{}
%%
%% and use \bibitem to create references. Consult the Instructions
%% for authors for reference list style.
%%
%\bibitem{RefJ}
%% Format for Journal Reference
%Author, Article title, Journal, Volume, page numbers (year)
%% Format for books
%\bibitem{RefB}
%Author, Book title, page numbers. Publisher, place (year)
%% etc
%\end{thebibliography}

\end{document}